\documentclass{article}
\usepackage{amsmath}
\usepackage{amssymb}
\usepackage{mathtools}
\usepackage{dsfont}
\usepackage[mathscr]{eucal}
\usepackage{mathrsfs}
\usepackage{stmaryrd}
\usepackage{centernot}

\usepackage[utf8]{inputenc}
\usepackage[T1]{fontenc}
\usepackage{csquotes}
\usepackage{microtype}
\usepackage[margin=1in]{geometry}

\usepackage{tikz-cd}
\tikzset{
  symbol/.style={
    draw=none,
    every to/.append style={
      edge node={node [sloped, transform shape, auto=false]{\scalebox{2}[1]{$#1$}}}}
  }
}

\usepackage{amsthm}
\newtheorem{theoremcounter}{DONTUSETHIS}[section]

\usepackage{enumitem}
\newlist{parts}{enumerate}{1}
\setlist[parts]{label=(\alph*),ref=\thetheoremcounter~(\alph*)}
\newlist{tfae}{enumerate}{1}
\setlist[tfae]{label=(\roman*)}

\usepackage{hyperref}
\usepackage[capitalise,compress]{cleveref-forward}
\crefname{tfae}{condition}{conditions}
\Crefname{tfae}{Condition}{Conditions}
\newcommand{\createtheoremtype}[3]{%
  \newtheorem{#1}[theoremcounter]{\MakeUppercase #2}%
  \crefname{#1}{#2}{#3}%
  \Crefname{#1}{\MakeUppercase #2}{\MakeUppercase #3}%
  \AddToHook{env/#1/begin}{\crefalias{theoremcounter}{#1}}
  \AddToHook{env/#1/begin}{\crefalias{partsi}{#1}}
}

\createtheoremtype{lemma}{Lemma}{Lemmas}
\createtheoremtype{proposition}{Proposition}{Propositions}
\createtheoremtype{theorem}{Theorem}{Theorems}
\createtheoremtype{corollary}{Corollary}{Corollaries}
\createtheoremtype{conjecture}{Conjecture}{Conjectures}
\theoremstyle{definition}
\createtheoremtype{definition}{Definition}{Definitions}
\theoremstyle{remark}
\createtheoremtype{fact}{Fact}{Facts}
\createtheoremtype{remark}{Remark}{Remarks}
\createtheoremtype{example}{Example}{Examples}

\newtheoremstyle{sublemma}{\topsep}{\topsep}{\itshape}{}{\bf}{.}{.5em}{}
\theoremstyle{sublemma}
\newtheorem{sublemma}{Claim}[theoremcounter]
\crefname{sublemma}{Claim}{Claims}
\Crefname{sublemma}{Claim}{Claims}

\newenvironment{proof*}{\vspace{-.6em}\begin{proof}}{\end{proof}\vspace{-.6em}}
\theoremstyle{definition}

\def\labelfull#1#2{\label[#1]{#1}
  \crefformat{#1}{##2#2##3}\Crefformat{#1}{##2#2##3}}

\def\labelsthm#1#2{\label[#1]{#1}
  \crefformat{#1}{##2#2##3's theorem}\Crefformat{#1}{##2#2##3's theorem}}

\newcommand{\N}{\mathbb N}
\newcommand{\Z}{\mathbb Z}
\newcommand{\Q}{\mathbb Q}
\newcommand{\R}{\mathbb R}
\newcommand{\C}{\mathbb C}

\let\strokeL\L
\DeclareUnicodeCharacter{0141}{\strokeL}
\renewcommand{\L}{\mathbb L}

\newcommand{\msf}{\mathsf}
\newcommand{\mc}{\mathcal}

\newcommand{\msc}{\mathscr}

\renewcommand{\phi}{\varphi}
\renewcommand{\theta}{\vartheta}

\newcommand{\xto}{\xrightarrow}

\newcommand{\theo}[1]{\mathsf{#1}}

\newcommand{\pare}[1]{\left(#1\right)}
\newcommand{\set}[1]{\left\{#1\right\}}
\newcommand{\setb}[2]{\set{#1\,\middle|\,#2}}

\newcommand{\inter}[1]{\left[#1\right]}
\newcommand{\lointer}[1]{\left(#1\right]}
\newcommand{\rointer}[1]{\left[#1\right)}
\newcommand{\ointer}[1]{\left(#1\right)}

\newcommand{\abs}[1]{\left|#1\right|}

\DeclareMathOperator{\id}{id}

\DeclareMathOperator{\dom}{dom}

\DeclareMathOperator{\im}{im}

\newcommand{\tsingl}{\{\star\}}



\newcommand{\dual}[1]{#1^{\mathrm{op}}}


\DeclareMathOperator{\diam}{diam}
\DeclareMathOperator{\rk}{rk}

\usepackage[style=alphabetic,maxnames=99,maxbibnames=99,maxcitenames=99,isbn=false,doi=false,url=false]{biblatex}
\addbibresource{bibl.bib}

\newcommand{\Th}{\msf{Th}}
\newcommand{\fTh}{\msf{fTh}}
\newcommand{\pTh}{\msf{pTh}}
\newcommand{\uTh}{\msf{uTh}}
\newcommand{\cTh}{\msf{cTh}}
\newcommand{\fcTh}{\msf{fcTh}}

\newcommand{\isAtom}{\msf{isAtom}}
\newcommand{\asub}{\msf{asub}}
\newcommand{\elem}{\msf{elem}}
\newcommand{\refin}{\msf{refin}}
\newcommand{\urefin}{\msf{urefin}}

\newcommand{\exten}{\msf{exten}}
\newcommand{\recon}{\msf{recon}}
\newcommand{\ucsub}{\msf{ucsub}}
\newcommand{\cant}{\msf{cant}}
\newcommand{\ucant}{\msf{ucant}}
\newcommand{\urefincant}{\msf{urefincant}}

\newcommand{\pows}[1]{2^{#1}}

\begin{document}
\title{The Borel monadic theory of order is decidable}
\author{Sven Manthe}
\date{}
\maketitle
\begin{abstract}
  The monadic theory of $(\R,\le)$ with quantification restricted to Borel sets is decidable. The Boolean combinations of $F_\sigma$ sets form an elementary substructure of the Borel sets. Under determinacy hypotheses, the proof extends to larger classes of sets.
\end{abstract}

\section{Introduction}
\cite{treeautomata} established decidability of the monadic second-order theory of two successors, S2S. Formulas in its language can be described by automata operating on infinite trees. S2S is a highly expressive decidable theory, used to establish decidability of other theories. It is a key result in the research investigating decidability of monadic second-order theories. For uncountable structures, decidability often necessitates restricting the range of the monadic quantifier to tamer classes of sets. For instance, S2S interprets the monadic theory of $(\R,\le)$ with quantification restricted to $F_\sigma$ sets, yielding decidability of the latter theory \cite[Theorem 2.9]{treeautomata}. Here, we answer affirmatively the open question \cites[Conjecture 7B]{shelahord}[$\alpha$(5), p. 95]{shelahnotesb}[\nopp 2.22]{shelahquestions} whether decidability persists when $F_\sigma$ sets are replaced by Borel sets.
\begin{theorem}\label{corborel}
  The monadic theory of $(\R,\le)$ with quantification restricted to Borel sets is decidable. The Boolean combinations of $F_\sigma$ sets form an elementary substructure of the Borel sets.
\end{theorem}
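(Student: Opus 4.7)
The elementary substructure claim implies decidability: monadic quantification over Boolean combinations of $F_\sigma$-sets (hereafter $\mc B$) can be replaced uniformly by a finite tuple of $F_\sigma$-quantifiers combined by a fixed Boolean formula, so the $\mc B$-monadic theory reduces to the $F_\sigma$-monadic theory, whose decidability follows from Rabin's theorem on S2S via the interpretation of \cite[Theorem 2.9]{treeautomata}. My plan is therefore to verify the Tarski--Vaught test: for every $\bar B \in \mc B^n$, every Borel set $A$, and every quantifier rank $k$, there should exist $A' \in \mc B$ with the same monadic $k$-type as $A$ over $\bar B$ in $(\R,\le)$.

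The natural framework is Shelah's composition method for monadic second-order logic on linear orders: the $k$-type of a coloured linear order is a finite object, determined via finite-state bookkeeping by the types of its convex pieces. Combined with induction on Borel rank, this reduces the task to the successor step. Suppose $A=\bigcup_n A_n$ with each $A_n\in\Pi^0_\alpha$, and suppose by induction that each $A_n$ is $k$-equivalent (over $\bar B$ together with suitable auxiliary $\mc B$-parameters) to some $A_n'\in\mc B$. One then wants to conclude that $A$ is $k$-equivalent over $\bar B$ to a single element of $\mc B$. The base case is immediate since open sets lie in $\mc B$.

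The main obstacle is precisely this last step, since $\mc B$ is not closed under countable unions, so one cannot simply take $\bigcup_n A_n'$. I would argue as follows: by the composition theorem, the $k$-type of $\bigcup_n A_n'$ over $\bar B$ is a Rabin-automaton-computable function of the sequence $(A_n')_n$, so the set of $k$-types realizable by such unions is definable by a tree automaton. Interpreting the accepted trees back into $(\R,\le)$ --- via essentially the same encoding used in the $F_\sigma$ case --- should yield a single $A'\in\mc B$ realizing the required $k$-type. Borel determinacy is invoked to handle the limit stages of the Borel-rank induction and to reconcile countable unions with countable intersections uniformly; the abstract's extension to $\sigma$-combinations of analytic sets corresponds to replacing Borel determinacy with analytic (or projective) determinacy at this point.
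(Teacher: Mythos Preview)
Your proposal has a genuine gap at its core. Shelah's composition theorem computes the $k$-type of a \emph{lexicographic sum} $\sum_{i\in I}X_i$ from the $k$-types of the summands and of the index order; it says nothing about how the $k$-type of $(\R,\le,\bigcup_n A_n,\bar B)$ depends on the $k$-types of the $(\R,\le,A_n,\bar B)$. In general it does not: two sequences $(A_n)_n$ and $(A_n')_n$ with term-by-term equal $k$-types over $\bar B$ can have unions of different $k$-types, because the type of the union depends on how the pieces overlap and interleave, information lost when you record only the individual types. Consequently your Borel-rank induction step --- replace each $A_n$ by an equivalent $A_n'\in\mc B$ and then argue about $\bigcup_n A_n'$ --- does not go through, and the subsequent appeal to tree automata ``computing the $k$-type of the union as a function of the sequence'' rests on a nonexistent composition principle. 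The interpretation in \cite[Theorem~2.9]{treeautomata} encodes $F_\sigma$-sets as tree-regular objects; it does not supply a composition rule for countable unions of $\mc B$-sets inside $\R$.

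The invocation of Borel determinacy is also misplaced. Determinacy hypotheses in this paper serve only to push the result \emph{beyond} Borel (to projective sets, or to all sets under AD); the Borel case itself uses nothing stronger than the Baire property, and the proof is carried out in ZF + DC. The actual argument is structurally quite different from a rank induction: one first expands the language so that meagerness becomes definable (via a Baire-category argument, \cref{defmeager}), then shows that the $k$-type of a uniform coloring is determined by a finite combinatorial invariant --- the \emph{coarse type} --- recording only the comeager color and the types of uniform Cantor subsets. The heart of the proof is the explicit point-set construction of Cantor sets with prescribed coarse types (\cref{findcantor}, \cref{compunifprec}), combined with a structure theorem saying every satisfiable coarse type arises from iterated ``uniform sums'' (\cref{satisconstr}). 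Elementarity of the inclusion of Boolean combinations of $F_\sigma$-sets then falls out because these constructions never leave the smaller algebra.
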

Without any restriction on the quantifiers, the theory is undecidable. Undecidability was initially shown under the continuum hypothesis \cite[Section 7]{shelahord} and subsequently in $\theo{ZFC}$ \cite{monadicundecid}. In fact, this theory is even more expressive than most undecidable theories. First-order arithmetic can be reduced to it. More generally, third-order arithmetic in a forcing extension obtained by adding a Cohen real to the ambient model of set theory can be reduced to the unrestricted monadic theory (first shown under the continuum hypothesis \cite{nextworld} and then in $\theo{ZFC}$ \cite{shelahnotesa}). A survey of the earlier developments is \cite{monadicsurvey}.\par
The monadic theory of topology is obtained by replacing the order relation by a unary predicate for open sets (equivalently, by the closure operator). For the reals or the Cantor set, the monadic theory of order readily interprets the monadic theory of topology. For the reals, the monadic theory of topology also defines the relation $x<y<z\lor z<y<x$ using connectedness, whence its decidability is equivalent to decidability of the theory of order. Thus, it is interesting to note that for many spaces, such as $\R^n$ for $n\ge2$, the monadic theory of topology becomes undecidable even when quantification is restricted to simple classes of sets \cites[Corollary 1]{undecidableRn}{undecidableRn2}. Generalizations of the undecidability result from the reals to other spaces have been studied \cite{monordtopi,monordtopii,modchaini,modchainii}.\par
Before establishing undecidability of the unrestricted monadic theory, \cite{shelahord} also introduced a novel technique for showing decidability of monadic theories of order. Here, we extend Shelah's methods using the Baire property in the Borel setting.\par
In the monadic language of order, for fixed $m,n\in\N$, there are up to equivalence only finitely many formulas in $m$ variables with $n$ quantifiers. Thus, decidability amounts to computability of the subset of this finite set given by true formulas, uniformly in $m,n$. Using lexicographic sums and Ramsey theoretic methods, Shelah reduced this problem to computability for formulas only containing quantifiers over sufficiently uniform sets and quantifiers over Cantor sets. A set is sufficiently uniform if it satisfies the same formulas with $n$ quantifiers on every open interval. In the unrestricted monadic theory, these quantifiers still suffice to encode arithmetic. This encoding requires a (uniform) parameter defined by transfinite recursion.\par
We show that the Borel monadic theory defines the class of meager sets. Using the definability, the Baire property and specific generalized Wadge games, we describe uniform formulas in variables $X_1,\dots,X_n$ combinatorially. Such formulas merely specify the existence or absence of possibly nested Cantor sets meeting certain $X_i$ while avoiding others. A more detailed outline is \cref{secsketch}.\par
The notion of a \textbf{sufficiently stable} Boolean subalgebra $\mc B$ of $\pows\R$ is defined in \cref{secconseq} such that both the Borel sets and the Boolean algebra generated by the $F_\sigma$ sets are stably Baire. Thus, our main result is a corollary of the following theorem.
\begin{theorem}[ZF+Dependent choices]\label{mainres}
  Let $\mc B,\mc C$ be sufficiently stable Boolean algebras with $\mc B\subseteq\mc C$ such that every subset of $2^\N$ in $\mc C$ is determined.
  \begin{parts}
  \item The monadic theory of $(\R,{\le})$ with quantification restricted to $\mc B$ is decidable.
  \item The embedding $\mc B\to\mc C$ is elementary.
  \end{parts}
\end{theorem}
\Cref{secconseq} presents further corollaries of this theorem.\par
The monadic theory of the reals expresses being homeomorphic to the Cantor set as being compact, nonempty, without isolated points and with empty interior. Similarly, the Baire space can be characterized as the complement of a countable dense subset. Hence, our results remain valid if the reals are substituted by one of these spaces (see \cref{equivcomp}, and the related \cref{decidpolish}).\par
Another monadic theory whose decidability remains open is that of $2^{\le\omega}$ with the lexicographic ordering and the prefix relation. This theory interprets both the monadic theory of $(\R,\le)$ and S2S. Our methods seem insufficient for addressing this theory, as explained in \cref{secbaire}.\par
Before continuing with \cref{secsketch,secconseq,secbaire}, we describe the structure of the article. \Cref{sectoppre} provides prerequisites in topology, order theory and Ramsey theory. \Cref{sectopo} presents point-set topological lemmas used in \cref{secdecproofa} and uses them to show the definability of meagerness. In \cref{secrepshe}, we present an adjusted version of results of \cite{shelahord}. We restrict quantification and simplify by restricting to complete total orders, for example, using Cantor sets instead of convex equivalence relations. Finally, \cref{secdecproofa} combines the previous parts to describe a fragment of the theory explicitly by a small amount of data for uniform types, relying only on the Baire property, and \cref{secdecproofb} employs generalized Wadge games to reduce all uniform types to this fragment.

\subsection{Proof outline}\label{secsketch}
In the following, we sketch both the technique of \cite{shelahord} and its extension to prove \cref{mainres}.\par
For fixed $m\in\N$ and $k\in\N^n$ define a set $\fTh^-_k(m)$ of monadic formulas in $m$ free variables by recursion on $n$. Let $\fTh^-_{()}(m)$ be the set of atomic formulas. For $n>0$ let $\fTh^-_k(m)$ be the set of formulas $\exists X_1\dots\exists X_{k_n}\phi$ with $\phi$ ranging over the Boolean combinations of elements of $\fTh^-_{(k_1,\dots,k_{n-1})}(m+k_n)$. Since the language is relational, each $\fTh^-_k(m)$ is finite. For a tuple $X=(X_1,\dots,X_m)$ of Borel sets, set $\Th_k(\R,X)=\setb{\phi\in\fTh^-_k(m)}{\R\models\phi(X)}$. Our aim is to compute $\Th_k(\R)$ (setting $m=0$) as $k$ varies. (Actually, we use a slightly different definition of $\Th_k$, considering first-order quantifiers separately to eliminate more quantifiers.)\par
Generalizing to arbitrary total orders, consider a lexicographic sum $\sum_{i\in I}X_i$ of total orders. For finite index sets $I$, $\Th_k(\sum_iX_i)$ is determined by and computable from the $\Th_k(X_i)$. This generalizes to arbitrary lexicographic sums if the input incorporates the monadic theory of $I$ in an appropriate sense (\cref{monadicsum}). For example, if $\R$ is the disjoint union of half-open intervals $\lointer{a_i,a_{i+1}}$ with a common type $\Th_k(\lointer{a_i,a_{i+1}},X)=t$ for all $i$, then $\Th_k(\R,X)$ can be computed from $t$, and if $C\subseteq\R$ is a Cantor set whose complementary intervals share a common type $t$, then $\Th_k(\R,X)$ can be computed from $\Th_k(C,X)$ and $t$.\par
Call a tuple $X$ of Borel sets $k$-uniform if any pair of open intervals $\ointer{a,b},\ointer{c,d}$ satisfies $\Th_k(\ointer{a,b},X)=\Th_k(\ointer{c,d},X)$. For example, for $k$ large enough a single countable set is $k$-uniform if and only if it is either empty or dense. Despite the restrictiveness of this property, by a Ramsey theoretic argument every tuple is $k$-uniform on some open interval. Thus, the set of points with a $k$-uniform neighborhood is open dense. In fact, its complement is empty or a Cantor set $C\subseteq\R$. In the latter case, by the aforementioned computation for lexicographic sums, $\Th_k(\R,X)$ is essentially determined by $\Th_k(C,X)$.\par
Employing this line of reasoning, all monadic sentences can be transformed into sentences containing only the following two kinds of restricted quantifiers (\cref{shelahres}):
\begin{itemize}
\item $\exists Y\phi(X_1,\dots,X_m,Y)$ quantifying over all $Y$ such that $(X_1,\dots,X_m,Y)$ is still uniform,
\item $\exists C\phi(X_1,\dots,X_m,C)$ quantifying over all Cantor sets $C$ for which $(X_1,\dots,X_m)$ is also uniform on $C$.
\end{itemize}
The first kind of quantifier can often be eliminated or reduced to the second kind. The second kind can be more problematic.\par
The techniques described so far also apply to the unrestricted monadic theory and originate in \cite{shelahord}. However, in the unrestricted theory, certain sets encode Boolean-valued first-order structures. In contrast, in the Borel setting, Baire category arguments control Cantor sets.\par
A prototypical instance of the remaining question is: Given a $k$-uniform tuple $(X_1,\dots,X_m)$, is there a Cantor set $C$ such that $X_1,\dots,X_\ell$ are dense in $C$, while $X_{\ell+1},\dots,X_m$ are disjoint from $C$? We do not get full quantifier elimination since the answer depends not only on the monadic quantifier-free type of $X_1,\dots,X_m$. For example, consider $m=3,\ell=2$. If $X_1,X_2$ are disjoint meager $F_\sigma$ sets and $X_3=\R\setminus(X_1\cup X_2)$, then no such Cantor set exists, but if $X_1$ is comeager instead, then such a set does exist.\par
Based on these ideas, we define meagerness in the Borel monadic theory (\cref{secdefinemeager}). By the Baire property, a set is either meager or comeager on some open interval. Hence, a $k$-uniform set is either meager or comeager everywhere.\par
After Shelah's reduction, the decision procedure consists of two parts.\par
First, we define a coarse version $\cTh_n$ of $\Th_k$ restricted to quantifiers of the second kind, where $n$ is the number of quantifier alternations. (The actual definition proceeds in an expanded language.) We compute the set of satisfiable coarse types of uniform tuples (\cref{coarsecomput}). In other words, we decide sentences with an existential block of quantifiers of the first kind followed by a formula containing only quantifiers of the second kind.\par
Each meager subset of the reals is included in a countable union of Cantor sets. Consider a type $\cTh_n(\R,X_1,\dots,X_n)$ and assume without loss of generality that all $X_i$ are meager. By a Baire category argument, $\cTh_n(\R,X_1,\dots,X_n)$ is determined by the types $\cTh_n(C_m)$ of the countably many Cantor sets $C_m$ with $\bigcup_mC_m\supseteq\bigcup_iX_i$. In other words, these Cantor sets do not interact. We finally show that every coarse type $\cTh_n(\R,P)$ arises from a finite iteration of a construction termed ``uniform sum'' (\cref{satisconstr}).\par
Intuitively, the uniform sum of Cantor sets $C_1,\dots,C_n$ (equipped with tuples of subsets) is the simplest way to construct uniform meager unions of copies of the $C_i$. For instance, consider $n=1$: First embed $C_1$ into $\R$; then embed $C_1$ into each complementary interval of the previous embedding, obtaining a new Cantor set that is a countable union of copies of $C_1$; finally iterate embedding $C_1$ into the complementary intervals thus obtained.\par
The second part of the decision procedure computes $\Th_k$ from $\cTh_n$ for appropriate $n$. If sufficiently complicated Cantor subsets exist, this step is straightforward. Otherwise, the reals can be decomposed into Boolean combinations of few $F_\sigma$ sets, which can be analyzed independently. This dichotomy follows from the determinacy of a ``separation game'' (\cref{secsepgame}), a generalized Wadge game.

\subsection{Consequences}\label{secconseq}
\begin{definition}\label{adequateall}
  A family $(\mc B_X)_X$ of Boolean subalgebras of $\pows X$ for $X$ ranging over the $G_\delta$ subsets of $\R$ is \textbf{sufficiently stable} if there are $\mc A_X\subseteq\pows X$ such that
  \begin{parts}
  \item $\mc A_X$ generates $\mc B_X$ as a Boolean algebra;
  \item if $f\colon X\to Y$ is a homeomorphism and $A\in\mc A_X$, then $f(A)\in\mc A_Y$;
  \item if $A\subseteq X$ is $G_\delta$ and $B\in\mc A_X$, then $A\cap B\in\mc A_A$;
  \item if $\R$ is partitioned into a $G_\delta$ subset $Y$ and countably many locally closed subsets $X_i$, then $A\in\mc B_\R$ if $A\cap Y\in\mc A_Y$ and $A\cap X_i\in\mc A_{X_i}$ for all $i$.\label{adequatepart}
  \end{parts}
  A Boolean subalgebra of $\pows\R$ is sufficiently stable if it is a component of a sufficiently stable family of Boolean algebras.
\end{definition}
The Borel sets are sufficiently stable with $\mc A$ comprising all Borel sets. The Boolean combinations of $F_\sigma$ sets are sufficiently stable with $\mc A$ consisting of the $G_\delta$ sets. Similarly, the Boolean combinations of $\mathbf\Sigma^0_\alpha$ sets for $\alpha\ge2$ or the $\mathbf\Delta^0_\alpha$ sets for $\alpha\ge3$ form sufficiently stable Boolean algebras, as do the Boolean or $\sigma$-combinations of $\mathbf\Sigma^1_n$ sets for $n\ge1$ or the $\mathbf\Delta^1_n$ sets, or the projective sets.\par
Thus, assuming the corresponding determinacy, we obtain analogues of \cref{corborel} by replacing either the Borel sets or the Boolean combinations of $F_\sigma$ sets by any of these algebras. For instance:
\begin{corollary}[Projective determinacy]
  The monadic theory of $(\R,\le)$ with quantification restricted to projective sets is decidable. The Boolean combinations of $F_\sigma$ sets form an elementary substructure of the projective sets.
\end{corollary}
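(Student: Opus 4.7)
The plan is to deduce both assertions from \cref{mainres} by showing that, under projective determinacy, the projective subsets of $\R$ form a stably Baire Boolean algebra. For each $G_\delta$-set $X\subseteq\R$, let $\mc A_X$ be the collection of all projective subsets of $X$; since this is already a Boolean algebra, it generates itself, giving the first condition of \cref{adequateall}. The essential use of projective determinacy enters in the Baire property condition: a classical consequence of PD is that every projective set has the Baire property. Closure of the projective hierarchy under homeomorphisms gives the homeomorphism-invariance condition, while closure under intersection with Borel sets handles the restriction-to-$G_\delta$-subset condition.

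For \cref{adequatepart}, observe that locally closed subsets of $\R$ and $G_\delta$-subsets of $\R$ are Borel, so a projective subset of such a piece is a projective subset of $\R$. Hence if $A\cap Y$ and each $A\cap X_i$ are projective in their respective parts of the partition, then $A$ is a countable union of projective sets and therefore projective, so $A\in\mc B_\R$. Consequently, the family of projective Boolean algebras is stably Baire, and the decidability assertion of the corollary follows from the first clause of \cref{mainres}.

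For the elementarity claim, the paper has already observed that the Boolean combinations of $F_\sigma$-sets form a stably Baire Boolean algebra (with $\mc A$ comprising the $G_\delta$-sets), and this algebra is a subalgebra of the projective sets. The natural inclusion is therefore an embedding of stably Baire Boolean algebras, which is elementary by the second clause of \cref{mainres}. The only substantive input beyond \cref{mainres} and elementary closure properties of the projective hierarchy is the PD consequence on the Baire property; that step is the natural place to focus attention, though it is itself a well-known classical result, so there is essentially no obstacle once the machinery of \cref{mainres} is in place.
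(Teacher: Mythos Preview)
Your proposal is correct and matches the paper's (implicit) approach: the corollary is stated without proof immediately after the remark that determinacy hypotheses make larger Boolean algebras stably Baire, and your verification of the conditions of \cref{adequateall} for the projective sets under PD is exactly the intended argument. The only content beyond \cref{mainres} is the classical fact that PD yields the Baire property for projective sets, which you correctly identify.
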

Nevertheless, undecidability of the theory with quantification over projective sets is also consistent with $\theo{ZFC}$. More precisely, undecidability holds if a projective well-ordering of the reals exists. The undecidability proof applies as noted in \cite[p. 410]{shelahord}. This holds without the continuum hypothesis using the proofs in \cite{monadicundecid} or \cite{shelahnotesa}. A detailed explanation under the continuum hypothesis is \cite[Theorem 3.1]{msoplusu}. In fact, the axiom of constructibility yields a $\Delta^1_2$-well-ordering.\par
Similarly, we deduce that the axiom of choice is required for undecidability of the full theory.
\begin{corollary}[ZF+Dependent choices+Determinacy]
  The monadic theory of $(\R,\le)$ is decidable. The Boolean combinations of $F_\sigma$ sets form an elementary substructure of all sets.
\end{corollary}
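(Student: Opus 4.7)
The plan is to reduce the statement to \cref{mainres} by showing that, under the determinacy hypothesis, $\pows\R$ itself is a stably Baire Boolean algebra. Decidability then becomes the first sentence of \cref{mainres} applied with $\mc B=\pows\R$, and the elementary substructure claim becomes its second sentence applied to the inclusion of the Boolean combinations of $F_\sigma$-sets (already noted to be stably Baire) into $\pows\R$.

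To witness stably Baireness, I would take $\mc A_X\colonequals\pows X$ for every $G_\delta$-subset $X\subseteq\R$, which forces $\mc B_X=\pows X$. Clauses (a), (c), (d), (e) of \cref{adequateall} then hold formally: $\pows X$ generates itself as a Boolean algebra; any homeomorphism $X\to Y$ carries $\pows X$ bijectively onto $\pows Y$; $A\cap B$ always lies in $\pows A$; and for any partition of $\R$ and any $A\subseteq\R$, both the hypothesis and the conclusion of \cref{adequatepart} are automatic. The only clause using the hypothesis is (b), which demands that every element of $\mc A_X=\pows X$ have the Baire property, i.e., that every subset of the Polish space $X$ have the Baire property. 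Under ZF+DC+AD this is the standard regularity consequence of determinacy: every subset of every Polish space has the Baire property.

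I do not anticipate any real obstacle; the corollary is essentially a packaging of the regularity conclusions of determinacy into the framework already developed in \cref{mainres}. The one minor point that might warrant an explicit line is the transfer of the Baire property from $\R$ to an arbitrary $G_\delta$-subset $X\subseteq\R$, but this is routine since such an $X$ is itself Polish and the Baire property is a topological invariant.
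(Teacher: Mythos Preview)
Your proposal is correct and is precisely the intended argument: the paper gives no explicit proof of this corollary, but the surrounding discussion (the paragraph on the Baire property in the restricted sense) makes clear that one simply checks $\pows\R$ is stably Baire under AD and invokes \cref{mainres}, exactly as you do. Your direct verification with $\mc A_X=\pows X$ is the natural way to carry this out; the only cosmetic difference is that the paper phrases the key point as ``every set has the Baire property in the restricted sense,'' which amounts to your clause~(b) checked on each $G_\delta$ (equivalently, each Cantor) subspace.
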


\subsection{Definability in the Borel monadic theory of order}
Many topological notions are definable in the monadic language of topology (and thus of order), for example, dense, connected and perfect sets, or sets with exactly $n$ isolated points for fixed $n\in\N$. The Borel monadic theory of topology defines more properties.
\begin{proposition}\label{defcountable}
  Let $\mc B$ be a sufficiently stable Boolean algebra such that every subset of $2^\N$ in $\mc B$ is determined.
  \begin{parts}
  \item The restricted monadic theory defines the class of countable sets.
  \item The restricted monadic theory defines the class of $F_\sigma$ sets.
  \item The restricted monadic theory defines the class of meager sets.
  \end{parts}
\end{proposition}
\begin{proof}
  (a) follows from the definability of perfect sets and the perfect set property. Similarly, (b) follows from Hurewicz' theorem \cite[Theorem 21.18]{kechrisdst}: A set $A$ is $G_\delta$ if and only if there is no Cantor set $C$ such that $A$ is countable dense in $C$. (c) follows as a set is meager if and only if it is included in an $F_\sigma$ set with empty interior.
\end{proof}
\begin{corollary}\label{decidpolish}
  The Borel monadic theory of topology of the class of zero dimensional Polish spaces is decidable.
\end{corollary}
\begin{proof}
  A space is zero dimensional Polish if and only if it is homeomorphic to a $G_\delta$ subset of the Cantor set. Since the class of $G_\delta$ sets is definable, the claim follows from decidability of the Borel monadic theory of topology of the Cantor set.
\end{proof}

\subsection{The Baire property}\label{secbaire}
It is natural to ask about decidability for classes between $\mathbf\Delta^1_1$ and $\mathbf\Delta^1_2$ in $\theo{ZFC}$. For example, decidability for the $\sigma$-algebra $\sigma(\mathbf\Sigma^1_1)$ generated by the analytic sets would follow from:
\begin{conjecture}\label{conjbaire}
  \Cref{mainres} continues to hold if determinacy is replaced by the Baire property.
\end{conjecture}
Since the Baire property for all projective sets is consistent with $\theo{ZFC}$ without requiring the large cardinal hypothesis of determinacy \cite[Theorem 7.16]{solovayinacc}, under \cref{conjbaire} decidability of the theory for projective sets is consistent with $\theo{ZFC}$. Similarly, since the consistency strength of determinacy is not necessary to obtain the Baire property for all sets \cite[Conclusion 7.17]{solovayinacc}, under \cref{conjbaire} decidability of the unrestricted monadic theory is independent of $\theo{ZF+DependentChoices}$.\par
The largest sufficiently stable Boolean algebra with the Baire property comprises the subsets $A\subseteq\R$ with the Baire property in the restricted sense. ($A\subseteq\R$ has the Baire property in the restricted sense if for all $C\subseteq\R$ the set $A\cap C$ has the Baire property in $C$. It suffices to verify this condition when $C$ is $\R$ or a Cantor set \cite[§11 VI]{kuratowskitopo}.) These sets form a $\sigma$-algebra containing all universally Baire sets.\par
We finally justify why \cref{conjbaire} is plausible. The only location in the proof where determinacy beyond the Baire property is used is \cref{secsepgame}, considering a generalized Wadge game. The lemma as stated implies the perfect set property and thus fails for $\sigma(\mathbf\Sigma^1_1)$ in some models of $\theo{ZFC}$. More generally, if \cref{conjbaire} is true, then every set without a perfect subset has the monadic type of a countable set. (For instance, one can show that no Bernstein set has the Baire property and so the sets without a perfect subset form an ideal under the assumptions of \cref{conjbaire}.) There is an analogous statement for $F_\sigma$ sets, whence \cref{defcountable} (a) and (b) fail for $\sigma(\mathbf\Sigma^1_1)$. However, we provide an alternative of (c) that only uses the Baire property in \cref{secdefinemeager}.\par
Extending a remark of \cite[p. 6]{msoplusu}, we show that the analogue of \cref{conjbaire} for $2^{\le\omega}$ fails. Indeed, there is a formula in $2^{\le\omega}$ equivalent to determinacy for $\mc B$ \cite[Section 5.3]{msoplusu}. This formula is true in the Borel monadic theory but false for Boolean combinations of analytic sets assuming the axiom of constructibility. Thus, we found two sufficiently stable Boolean algebras with the Baire property that yield different theories, contradicting the analogue of \cref{conjbaire}.

\section*{Acknowledgements}
This research was supported by the Hausdorff Center for Mathematics at the University of Bonn (DFG EXC 2047). I thank Philipp Hieronymi for suggesting the topic and support in writing this paper, Kim Kiehn and Michael Reitmeir for proofreading, Yuri Gurevich for discussing a question, and Taichi Yasuda for discussing the relation between my separation games and Wadge games.

\section{Notation and prerequisites}\label{sectoppre}
We start by introducing notation and some well-known results.\par
Let $X$ be a set. Let $\#X$ denote the cardinality of $X$. Let $\pows X$ denote the power set of $X$. The disjoint union or coproduct of sets $A,B$ is denoted by $A\amalg B$, which has lower precedence than cartesian products (that is, $A\amalg B\times C=A\amalg(B\times C)$).\par
For us, $0\in\N$. For $k\in\N^n$, write $\abs k=n$. For $k=(k_1,\dots,k_n)\in\N^n$ with $n>0$, write $k^-=(k_1,\dots,k_{n-1})$. We abbreviate $\set{0,\dots,m}^\N$ by $(m+1)^\N$. Thus, for $y\in m^\N$, we have $\limsup y\in\set{0,\dots,m-1}$.\par
A \textbf{labeling} of a set is a map to a finite set, whose elements are called labels. If $P$ is a labeling, then $\#P$ denotes the cardinality of its codomain. A labeling $Q\colon X\to J$ \textbf{refines} a labeling $P\colon X\to I$ \textbf{along} a map $f\colon J\to I$ if $P=f\circ Q$. In this case we say that $P$ \textbf{coarsens} $Q$. If $(X,P)$ is a labeled set, that is, $P$ is a labeling of $X$, and $Y\subseteq X$, then we abuse notation by writing $(Y,P)$ for the restriction $(Y,P|_Y)$. If $P$ is clear from the context, we often refer to properties of $(Y,P|_Y)$ or $P|_Y$ by saying that $Y$ has this property.\par
We usually consider labeled total orders. Embeddings and isomorphisms of these are required to preserve the labels. We thus obtain a category total orders labeled by a fixed set as objects and monotone label-preserving maps as morphisms. An isomorphism class in this category is called \textbf{labeled order type}.\par
Let $()$ denote the empty tuple. If $P$ is an $n$-tuple, we denote its components by $P_1,\dots,P_n$. If $P_1,\dots,P_n$ are elements, we denote the tuple of them by $P$. We often use notation for elements also for tuples, for example $\exists P$ for $\exists P_1\exists P_2\dots\exists P_n$.
Let $A,B$ be subsets of a topological space $X$. Let $\overline A,\mathring A,\partial A$ denote the closure, interior, and boundary of $A$. We call $A$ \textbf{perfect} if it is closed, nonempty, and has no isolated points. The set $A$ is \textbf{nowhere dense} if $\mathring{\overline A}$ is empty. It is \textbf{somewhere dense} else. We call $A$ \textbf{dense in} $B$ if $A\cap B$ is dense in the subspace $B$, and similarly for somewhere dense or (co-)meager sets.\par
A topological space is the \textbf{disjoint union} of subsets $(U_i)_{i\in I}$ if its underlying set is their disjoint union, but the $U_i$ need not be clopen. A family $(V_j)_{j\in J}$ of sets is a \textbf{refinement} of $(U_i)_{i\in I}$ if $\bigcup_jV_j=\bigcup_iU_i$ and for all $j$ there is an $i$ with $V_j\subseteq U_i$.\par
Let $\R$ denote the reals with the usual ordering. Let $\overline\C$ denote the Cantor set $\prod_{n\in\N}\set{0,1}$ with the lexicographic ordering and $\C=\overline\C\setminus\set{0^\infty,1^\infty}$ be obtained by removing endpoints. All total orders carry the order topology. Equivalently, $\overline\C$ carries the product topology of the discrete topologies and $\C$ carries the subspace topology of $\overline\C$.
\begin{fact}
  If $(Y,{\le})$ is a total order and $X\subseteq Y$ is compact in the subspace topology, then the order topology and the subspace topology on $X$ coincide.
\end{fact}
In particular, for subspaces homeomorphic to $\overline\C$, and thus also for subspaces homeomorphic to $\C$, the order topology and the subspace topology coincide. A subset of a space is \textbf{a Cantor set} if it is homeomorphic to $\C$. Note the deviation from the usual meaning of being homeomorphic to $\overline\C$.\par
\begin{definition}\label{cantorquot}
  Let $(C,\le)$ be a total order with $C\cong\C$. Consider the equivalence relation that identifies $a,b$ if $a=b$ or $(a,b)$ is a jump or $(b,a)$ is a jump. Let $\pi_C\colon C\to R$ denote the quotient map for this equivalence relation.
\end{definition}
In particular, $R\cong\R$ and $\pi_C$ is increasing. We sometimes also write $\pi_C\colon C\to\R$ for a composite with such an isomorphism.

\subsection{Descriptive set theory}
We collect basic facts of descriptive set theory. A reference is \cite{kechrisdst}.\par
A subset of a Polish space has the \textbf{Baire property} if it is the symmetric difference of an open set and a meager set. Borel sets, and more generally $\sigma$-combinations of analytic sets, have the Baire property.
\begin{fact}[Baire alternative]\labelfull{bairealt}{Baire alternative}
  Let $A$ have the Baire property. Then exactly one of the following conditions holds:
  \begin{tfae}
  \item $A$ is meager,
  \item there is a nonempty open $U$ such that $A$ is comeager in $U$.
  \end{tfae}
\end{fact}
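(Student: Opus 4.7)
The plan is to unpack the definition of the Baire property and then split into two cases based on whether the associated open set is empty. Write $A = U \mathbin{\triangle} M$ with $U$ open and $M$ meager, as provided by the Baire property. If $U = \emptyset$, then $A = M$ is meager and (i) holds. Otherwise, $U$ is nonempty and I would argue that $A$ is comeager in $U$: indeed, $U \setminus A \subseteq M$ (because a point of $U$ not in $A$ lies in the symmetric difference $U \mathbin{\triangle} A = M$), so the complement of $A$ inside $U$ is contained in a meager set, which gives (ii).

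The remaining task is to show that (i) and (ii) cannot hold simultaneously; this is where the Baire category theorem enters, and I expect it to be the only nontrivial point. Suppose for contradiction that $A$ is meager in $\R$ and simultaneously comeager in some nonempty open $U \subseteq \R$. Then both $U \cap A$ and $U \setminus A$ are meager in $U$: the former because it is a subset of the meager set $A$, the latter because $A$ is comeager in $U$. Hence $U = (U \cap A) \cup (U \setminus A)$ is meager in itself, contradicting the fact that the nonempty open subset $U$ of the Polish space $\R$ is a Baire space and therefore not meager in itself.

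I would cite the Baire category theorem as a standard prerequisite (it is mentioned as part of the descriptive set theory preliminaries), and I do not anticipate any genuine obstacle beyond this single application. The only mild subtlety is the relativization in step (ii): one must check that meagerness in $\R$ implies meagerness in the open subspace $U$, which is immediate since a nowhere dense subset of $\R$ restricted to $U$ remains nowhere dense in $U$.
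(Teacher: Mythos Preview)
Your argument is correct and is the standard proof of the Baire alternative. The paper does not give a proof of this statement at all: it is recorded as a basic fact of descriptive set theory with a reference to \cite{kechrisdst}, so there is no paper proof to compare against. Your write-up would serve perfectly well as a self-contained justification.
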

Another useful immediate implication of Baire's theorem is the following fact.
\begin{fact}\label{closmeag}
  For a Cantor set $C$ in a Polish space, the following are equivalent:
  \begin{tfae}
  \item $C$ has empty interior,
  \item $C$ is nowhere dense,
  \item $C$ is meager.
  \end{tfae}
\end{fact}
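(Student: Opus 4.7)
The plan is to show (ii) $\Rightarrow$ (i) trivially, establish (i) $\Leftrightarrow$ (iii) via $\sigma$-compactness and the Baire theorem, and treat (i) $\Rightarrow$ (ii) separately as the main step.

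The implication (ii) $\Rightarrow$ (i) is immediate from $\mathring C \subseteq \mathring{\overline C}$. For (i) $\Rightarrow$ (iii), I would use that $\C$ is $\sigma$-compact, being open in the compact metrizable space $\overline\C$. Hence $C = \bigcup_n K_n$ with each $K_n$ compact in $C$, and therefore compact---hence closed---in the ambient Polish space. If $\mathring C$ is empty, so is $\mathring{K_n}$, making each $K_n$ nowhere dense and $C$ meager. Conversely, for (iii) $\Rightarrow$ (i), if $W := \mathring C$ were nonempty, then $W$ would be open in a Polish space and therefore itself a Baire space; but the restriction of any set meager in the ambient space to an open subspace is meager there, so $W \subseteq C$ would be meager in $W$, contradicting the Baire theorem.

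The main step is (i) $\Rightarrow$ (ii), which I expect to be the real obstacle since empty interior need not in general imply nowhere dense. Suppose for contradiction that $\overline C$ has nonempty interior $U$; since $C$ is dense in $\overline C$, pick some $c \in C \cap U$. The key observation is that $\C$ admits at each point a neighborhood basis of compact clopen sets, namely the cylinders $[s]\subseteq 2^\N$ whose defining finite word $s$ is neither $0\dots0$ nor $1\dots1$, so that $[s]$ avoids $0^\infty$ and $1^\infty$. Transferring this to $C$, I pick a compact clopen neighborhood $V$ of $c$ in $C$ with $V \subseteq C \cap U$, and write $V = W \cap C$ for some open $W \subseteq U$ in the ambient space. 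Since $V$ is compact in a Hausdorff space, it is closed in the ambient space. Using the identity $W \cap \overline C \subseteq \overline{W \cap C}$, valid for open $W$, I then conclude $W \cap \overline C \subseteq \overline V = V \subseteq C$. Finally, $W \subseteq U \subseteq \overline C$ forces $W = W \cap \overline C \subseteq C$, contradicting $\mathring C = \emptyset$. The crux is thus to exploit the compact clopen basis of $\C$ to bridge ``empty interior'' and ``nowhere dense'' in a setting where the Cantor set $C$ need not be closed in the ambient space.
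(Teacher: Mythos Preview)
Your proof is correct. The key difference from the paper lies in (i)$\Rightarrow$(ii): the paper asserts that ``$C$ is cofinite in its closure,'' but this fails in general. For instance, if $K\subseteq\overline\C$ is any infinite closed nowhere dense set (say a convergent sequence together with its limit), then $C:=\overline\C\setminus K$ is a proper dense open subset of $\overline\C$, hence a countably infinite disjoint union of nonempty clopen sets each homeomorphic to $\overline\C$, so $C\cong\C$; yet $\overline C\setminus C=K$ is infinite. Your argument replaces the cofiniteness claim with the correct property that $\C$ has a basis of compact clopen neighborhoods at every point---equivalently, that $C$ is locally compact and hence open in its closure---which is exactly what is needed to trap a nonempty ambient open set inside $C$ whenever $\mathring{\overline C}\neq\emptyset$. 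The remaining implications are routine and match the paper's treatment; your separate (i)$\Rightarrow$(iii) via $\sigma$-compactness is a pleasant direct route, though once (i)$\Rightarrow$(ii) is in hand it is subsumed by the trivial (ii)$\Rightarrow$(iii).
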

\begin{proof}
  (i)$\iff$(ii) holds as $C$ is cofinite in its closure, (ii)$\implies$(iii) by definition and (iii)$\implies$(i) by Baire's theorem.
\end{proof}
Each nonempty totally disconnected compact metric space without isolated points is homeomorphic to $\overline\C$. This implies the following fact.
\begin{fact}\label{subcantoriff}
  \begin{parts}
  \item Let $A\subseteq\overline\C$. Then $A$ is homeomorphic to $\overline\C$ if and only if $A$ is compact, nonempty and has no isolated points.
  \item Let $A\subseteq\R$. Then $A$ is homeomorphic to $\overline\C$ if and only if $A$ is compact, nonempty, has no isolated points and empty interior.
  \end{parts}
\end{fact}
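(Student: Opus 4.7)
The plan is to derive both parts from the classical characterization of $\overline\C$ stated just above: every nonempty totally disconnected compact metric space without isolated points is homeomorphic to $\overline\C$. In each part one direction is trivial by transport of structure along a homeomorphism, so the content is the reverse direction, and the only nontrivial ingredient is establishing total disconnectedness of $A$.

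For (a), the ``only if'' direction is immediate since compactness, nonemptiness, and absence of isolated points are preserved by homeomorphisms. For the ``if'' direction, since $\overline\C=\prod_n\set{0,1}$ is a product of discrete finite spaces it is totally disconnected, hence the subspace $A\subseteq\overline\C$ is totally disconnected as well. Being a subspace of the metric space $\overline\C$, it is metric, and by hypothesis compact, nonempty, and without isolated points. The cited characterization applies and yields $A\cong\overline\C$.

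For (b), the ``only if'' direction again gives compactness, nonemptiness, and no isolated points for free; for empty interior, observe that $A$ inherits total disconnectedness from $\overline\C$, so $A$ can contain no nondegenerate interval (intervals are connected), and in particular no open interval, giving $\mathring A=\emptyset$. For the ``if'' direction, I would verify total disconnectedness of $A\subseteq\R$ as follows: each connected component of $A$ is a connected subset of $\R$, hence an interval. If some component contained two distinct points $a<b$, then, being connected and contained in $A$, it would contain the entire interval $\inter{a,b}\subseteq A$, contradicting $\mathring A=\emptyset$. Thus every component is a singleton, so $A$ is totally disconnected. Since $A$ is also compact (hence a compact metric space as a subspace of $\R$), nonempty, and has no isolated points, the cited characterization yields $A\cong\overline\C$.

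There is no real obstacle; the only subtle point is the implication ``empty interior $\Rightarrow$ totally disconnected'' in $\R$, which is where the ambient structure of the real line enters essentially and accounts for the extra hypothesis in (b) compared to (a).
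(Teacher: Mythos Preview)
Your proposal is correct and follows exactly the approach the paper indicates: the paper does not give a proof but merely states that the fact follows from the classical characterization of $\overline\C$ as the unique nonempty totally disconnected compact metric space without isolated points. You have supplied precisely the missing verification, namely that subspaces of $\overline\C$ are automatically totally disconnected and that in $\R$ empty interior forces total disconnectedness.
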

The next fact follows since open subsets of spaces without isolated points have no isolated points.
\begin{fact}\label{clopencantor}
  If $U\subseteq\overline\C$ is nonempty clopen, then $U$ is homeomorphic to $\overline\C$.
\end{fact}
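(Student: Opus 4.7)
The plan is to reduce this to Fact~\ref{subcantoriff}(a), which characterizes subsets of $\overline\C$ homeomorphic to $\overline\C$ as the compact nonempty subsets without isolated points. So I only need to verify these three conditions for $U$.

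Nonemptiness is given by hypothesis. For compactness, note that $U$ is closed in $\overline\C$ (since it is clopen), and $\overline\C$ is compact, so $U$ inherits compactness as a closed subspace. For the absence of isolated points, I use the remark immediately preceding the fact: an open subset of a space without isolated points has no isolated points. Concretely, if $x \in U$ were isolated in $U$, there would be an open $V \subseteq \overline\C$ with $V \cap U = \{x\}$; but $U$ is open, so $V \cap U$ is open in $\overline\C$, making $x$ isolated in $\overline\C$, contradicting the fact that $\overline\C = \prod_{n\in\N}\{0,1\}$ has no isolated points.

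There is no real obstacle here; the proof is essentially a one-line invocation of Fact~\ref{subcantoriff}(a) together with the elementary topological observation about openness preserving the lack of isolated points. The only thing worth stating explicitly is why $\overline\C$ itself has no isolated points, which is immediate from its product description: any basic clopen neighborhood of a point $x \in \overline\C$ fixes only finitely many coordinates and thus contains points distinct from $x$.
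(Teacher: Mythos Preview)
Your proposal is correct and matches the paper's approach exactly: the paper justifies the fact by the single sentence ``open subsets of spaces without isolated points have no isolated points,'' implicitly invoking Fact~\ref{subcantoriff}(a) together with compactness of closed subsets of $\overline\C$, which is precisely what you spell out.
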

Since any second countable topological space is the disjoint union of a closed subset without isolated points and a countable open subset, we obtain the following fact.
\begin{fact}\label{compmeagreal}
  \begin{parts}
  \item If $A\subseteq\C$ is compact, then $A$ is either countable or a disjoint union of a Cantor set and a countable set.
  \item If $A\subseteq\R$ is compact with empty interior, then $A$ is either countable or a disjoint union of a Cantor set and a countable set.
  \end{parts}
\end{fact}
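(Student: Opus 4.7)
The plan is to apply the Cantor--Bendixson style decomposition remarked immediately before the statement: any second countable space $A$ decomposes as $A = P \sqcup S$ with $P$ closed in $A$ and without isolated points and $S$ open in $A$ and countable. Since $A$ is second countable (as a subspace of $\C$ or $\R$), this applies. If $P$ is empty, then $A = S$ is countable, and we are done in both parts. The interesting case is $P \neq \emptyset$, which I would reduce as follows.

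If $P$ is nonempty, then being closed in the compact set $A$, it is compact, and by construction nonempty and without isolated points. For part~(a), $P \subseteq \C \subseteq \overline\C$ is compact, nonempty, and has no isolated points, so by \cref{subcantoriff}(a) there is a homeomorphism $\phi\colon P \to \overline\C$. For part~(b), $P \subseteq A \subseteq \R$ inherits empty interior from $A$, and the same conclusion follows from \cref{subcantoriff}(b).

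The subtle point, which I expect to be the only real obstacle, is the paper's nonstandard convention that a \emph{Cantor set} means a set homeomorphic to $\C = \overline\C \setminus \{0^\infty, 1^\infty\}$, not to $\overline\C$. Since $P$ is compact, it cannot itself be a Cantor set in this sense. I would fix this by setting $p \coloneqq \phi^{-1}(0^\infty)$ and $q \coloneqq \phi^{-1}(1^\infty)$; then $\phi$ restricts to a homeomorphism $P \setminus \{p,q\} \to \C$, so $P \setminus \{p,q\}$ is a Cantor set. The desired decomposition is
\[
A = \bigl(P \setminus \{p,q\}\bigr) \sqcup \bigl(S \cup \{p,q\}\bigr),
\]
where the second summand remains countable because $S$ is. This handles both parts uniformly, with the only difference being which clause of \cref{subcantoriff} is invoked.
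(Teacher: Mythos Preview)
Your proof is correct and follows the same route the paper indicates: the paper's entire argument is the one-sentence remark preceding the statement, invoking the Cantor--Bendixson decomposition, and you have simply spelled out the details, including the adjustment for the paper's convention that a Cantor set means a copy of $\C$ rather than $\overline\C$.
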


\subsection{Order theory}
For some spaces, like $\R\setminus\Q$ or $\C$, the monadic theory of order is more expressive than the theory of topology. Thus, we need refined versions of the previous results concerning order isomorphisms and not only homeomorphisms.
\begin{fact}\label{cantorderisom}
  Let $A\subseteq\R$ or $A\subseteq\overline\C$ be a Cantor set. Then $A$ is order isomorphic to $\C$.
\end{fact}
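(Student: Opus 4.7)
The plan is to produce an order isomorphism $A\to\C$ by passing to the closure in the ambient space, identifying the removed points, and matching the order structure.

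First I would verify that the closure $\overline A$ (taken in $\R$ or $\overline\C$) is order isomorphic to $\overline\C$. Since $A$ is homeomorphic to $\C$ it is perfect, totally disconnected, and locally compact; being totally disconnected in a linear order forces $A$ to have empty interior, and local compactness of the dense subspace $A\subseteq\overline A$ makes $A$ open in $\overline A$, so $\overline A$ is compact, perfect, and has empty interior, hence homeomorphic to $\overline\C$ by \cref{subcantoriff}. A standard back-and-forth on the countable dense sets of jump pairs (pairs of adjacent points bounding a complementary open interval, organized in both spaces as a densely branching binary tree indexed by $\set{0,1}^{<\N}$) upgrades this to an order isomorphism $\overline A\cong\overline\C$.

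Next I would analyze the closed remainder $F=\overline A\setminus A$, aiming to show $F=\set{\min\overline A,\max\overline A}$. Non-compactness of $A$ gives $F\ne\emptyset$. The two extremes of $\overline A$ must lie in $F$, by matching the two topological ``ends'' of $\C$ to the two order-extremes of $\overline A$ under the iso $\overline A\cong\overline\C$. No interior point of $\overline A$ can lie in $F$, because removing such a point would create in $A$'s order a Dedekind cut realized by neither a maximum of its lower part nor a minimum of its upper part, incompatible with $\C$'s canonical presentation where every nontrivial cut is realized by an adjacent jump pair or by an extreme element of a subinterval. Combining these, $A$ corresponds under the order iso to $\overline\C\setminus\set{0^\infty,1^\infty}=\C$.

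The main obstacle I anticipate is this second step, specifically the identification of $F$. Forcing both extremes of $\overline A$ into $F$ and simultaneously excluding interior points from $F$ requires carefully combining the topological hypothesis $A\cong\C$ (in particular, the two-ended non-compact structure of $\C$ and the cut-realization property of its canonical presentation $\overline\C\setminus\set{0^\infty,1^\infty}$) with the order structure of the ambient linearly ordered space.
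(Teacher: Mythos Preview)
Your Step~2 claim that $F=\overline A\setminus A=\{\min\overline A,\max\overline A\}$ is false, and in fact the Fact as literally stated admits a counterexample. Let $K\subseteq[0,1]$ be the standard ternary Cantor set and $A=K\setminus\{1/3,2/3\}$, removing both endpoints of the central gap. Then $\overline A=K$, $F=\{1/3,2/3\}$, and $0=\min A$, $1=\max A$. Yet $A$ is homeomorphic to $\C$: the clopen split at the removed jump gives $A\cong(\overline\C\setminus\{1^\infty\})\sqcup(\overline\C\setminus\{0^\infty\})$, the clopen split of $\C$ by first coordinate gives $\C\cong(\overline\C\setminus\{0^\infty\})\sqcup(\overline\C\setminus\{1^\infty\})$, and the bit-flip homeomorphism of $\overline\C$ identifies the two summands. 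Thus $A$ is a Cantor set in the paper's sense but, having a minimum and a maximum, is not order isomorphic to $\C$. Your end-matching argument fails here because the two topological ends of $A$ accumulate at the \emph{interior} points $1/3,2/3$ of $\overline A$, not at its extremes; your cut-realization argument fails because ``every nontrivial cut is realized'' is an order property of $\C$, not a topological one, and the hypothesis gives you only a homeomorphism $A\cong\C$.

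The paper's proof follows the same outline as yours: after reducing to $A\subseteq\R$, it asserts that the Fact is equivalent to the folklore statement that every $B\subseteq\R$ homeomorphic to $\overline\C$ is order isomorphic to $\overline\C$ (essentially your Step~1) and cites a reference for the latter, leaving the passage from $\overline\C$ back to $\C$ --- precisely your Step~2 --- unargued. The same counterexample shows that this equivalence fails. What rescues the paper is that its later applications invoke the Fact only for sets obtained as $B\setminus\{\min B,\max B\}$ with $B$ closed, perfect, and with empty interior in $\R\cup\{\pm\infty\}$; for those one has $\overline A=B$ and $F=\{\min B,\max B\}$ by construction, and then your argument (and the paper's reduction) does go through.
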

\begin{proof}
  We may assume $A\subseteq\R$ as $\overline\C$ embeds into $\R$ simultaneously as order and as topological space. Then the fact is equivalent to the statement that each $A\subseteq\R$ homeomorphic to $\overline\C$ is order isomorphic to $\overline\C$. For a proof of this folklore result see \cite[Section 1.2, Theorem, p. 3]{orderCantor}.
\end{proof}
\begin{fact}\label{irratuniq}
  Let $A\subseteq\R$ be countable and dense. Then $\R\setminus A$ is order isomorphic to $\R\setminus\Q$.
\end{fact}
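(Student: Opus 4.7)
The plan is to reduce to the case $A=\Q$ by constructing an order automorphism of $\R$ that maps $A$ bijectively onto $\Q$; restricting this automorphism to $\R\setminus A$ will then yield the desired isomorphism onto $\R\setminus\Q$.

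First, I would observe that $A$, being countable and dense in $\R$, must be unbounded above and below and lack both a maximum and a minimum, as otherwise density would fail past an extremal point or outside the bounds. Thus $(A,<)$ is a countable dense linear order without endpoints, as is $(\Q,<)$. Cantor's classical back-and-forth theorem then produces an order isomorphism $f\colon A\to\Q$.

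Next, I would extend $f$ to $\tilde f\colon\R\to\R$ by the Dedekind-cut formula
\[
  \tilde f(x)=\sup\{f(a):a\in A,\ a\le x\}=\inf\{f(a):a\in A,\ a\ge x\}.
\]
Both quantities are finite because $A$ meets every real neighborhood and $\Q$ is unbounded in both directions. For $x\in A$ they visibly coincide with $f(x)$; for $x\notin A$ they agree because any rational strictly between them would have the form $f(a)$ for some $a\in A$ simultaneously satisfying $a<x$ and $a>x$. This yields a strictly increasing extension of $f$, and applying the same construction to $f^{-1}$ produces a two-sided inverse, so $\tilde f$ is an order automorphism of $\R$.

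Since $\tilde f(A)=f(A)=\Q$, the restriction $\tilde f|_{\R\setminus A}$ is the desired order isomorphism $\R\setminus A\to\R\setminus\Q$. The only delicate step is verifying that the Dedekind extension is well-defined and bijective, which rests on $A$ and $\Q$ both being dense and unbounded in $\R$; no genuine obstacle arises.
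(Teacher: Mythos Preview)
Your proof is correct and follows essentially the same approach as the paper: apply Cantor's theorem to obtain an order isomorphism $A\cong\Q$, extend it to an order automorphism of the completion $\R$, and restrict to the complements. The paper states the extension step in one phrase (``extending such an isomorphism to completions''), whereas you spell out the Dedekind-cut formula and its verification, but the underlying argument is identical.
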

\begin{proof}
  By Cantor's theorem $A$ is order isomorphic to $\Q$. Extending such an isomorphism to completions, we obtain an order automorphism $f\colon\R\to\R$ with $f(A)=\Q$. Thus $f|_{\R\setminus A}$ is the desired isomorphism.
\end{proof}
Let $(X,\le)$ be a total order. We use interval notation $\inter{a,b},\ointer{a,b},\lointer{a,b},\rointer{a,b}$ for intervals in $X$. A \textbf{jump} in $X$ is a pair $(x,y)\in X\times X$ with $x<y$ such that there is no $z\in X$ with $x<z<y$. The jumps in $X$ are totally ordered by $(x,y)<(x',y')$ if and only if $x<x'$ if and only if $y<y'$ if and only if $y\le x'$. If $X$ is complete and dense and $C\subseteq X$ is closed, we call the connected components of $X\setminus C$ \textbf{complementary interval}s of $C$ in $X$. The complementary intervals are totally ordered by $I<J$ if and only if there are $i,j$ with $I\ni i<j\in J$ if and only if all $i\in I,j\in J$ satisfy $i<j$.
\begin{fact}\label{realsincantor}
  The set $\C$ has a cocountable subset order isomorphic to $\R$ and first-order definable in $(\C,\le)$.
\end{fact}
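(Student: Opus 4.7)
The plan is to take $D := \set{x \in \C : x \text{ has an immediate successor in } \C}$, which is first-order definable via $\exists y\,(x<y \land \forall z\,\neg(x<z<y))$, and prove $\C\setminus D$ is a cocountable order-isomorphic copy of $\R$. The first step is to identify $D$ explicitly: the only jumps in the lexicographic order on $\set{0,1}^\N$ are the pairs $(s01^\infty, s10^\infty)$ with $s\in\set{0,1}^{<\omega}$, and $s10^\infty$ has no immediate successor in $\C$ because the sequence $s10^k10^\infty$ descends to it as $k\to\infty$. Thus $D = \set{s01^\infty : s\in\set{0,1}^{<\omega}}$, a countable set, so $\C\setminus D$ is cocountable.

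For the order isomorphism, I will use the Cantor function $c\colon\overline\C\to\inter{0,1}$ given by $c((x_n)_n)=\sum_{n\ge 0}x_n/2^{n+1}$. A direct calculation shows $c$ is weakly order-preserving and that $c(x)=c(y)$ with $x<y$ happens precisely when $(x,y)$ is a jump pair. Restricted to $\C\setminus D$ the map is therefore strictly increasing, because from each jump pair only the right endpoint $s10^\infty$ survives. Its image is $\ointer{0,1}$: non-dyadic reals in $\ointer{0,1}$ have a unique, not-eventually-constant preimage; dyadic $k/2^n\in\ointer{0,1}$ retains the preimage $s10^\infty\in\C\setminus D$; and the endpoints $0,1$ are missed because $0^\infty,1^\infty\notin\C$. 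Composing with any order isomorphism $\ointer{0,1}\cong\R$ completes the proof. The only conceptual subtlety, which dictates the choice of $D$, is that removing exactly one endpoint of each jump collapses it to a single point of the image without opening a Dedekind gap — removing both would open one.
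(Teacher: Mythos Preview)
Your proof is correct and follows essentially the same approach as the paper: remove one endpoint of each jump to obtain a cocountable, first-order definable subset, which is then order-isomorphic to $\R$. The only differences are cosmetic --- the paper removes the \emph{right} endpoints of jumps (points $y$ with some $x$ such that $(x,y)$ is a jump) while you remove the left ones, and you spell out the isomorphism via the binary-expansion map where the paper simply cites the result as folklore.
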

\begin{proof}
  Consider the set $X$ of $y\in\C$ such that there is no $x\in\C$ such that $(x,y)$ is a jump. Then $X$ is cocountable and definable. It is folklore that $X$ is order isomorphic to $\R$.
\end{proof}

\subsection{Ramsey theory}
\begin{definition}
  Let $(X,{\le})$ be a total order and $C$ be a set.
  \begin{parts}
  \item A \textbf{coloring} of $X$ by the set of colors $C$ is a map $f\colon\setb{(x,y)\in X\times X}{x<y}\to C$.
  \item A coloring $f$ is \textbf{additive} if $(C,+)$ is a semigroup such that $f(x,y)+f(y,z)=f(x,z)$ for all $x,y,z$.
  \item A subset $A\subseteq X$ is \textbf{homogeneous} if $f$ is constant on $(A\times A)\cap\dom f$.
  \end{parts}
\end{definition}
Let us state a particular case of Ramsey's theorem using this notation.
\begin{fact}\labelsthm{ramsey}{Ramsey}
  Every coloring of an infinite set by a finite set of colors has an infinite homogeneous subset.
\end{fact}
Additivity of a coloring may be rephrased as functoriality if the preorder $X$ and the monoid $C\cup\set0$ are regarded as categories as usual.
\begin{fact}\label{dloramsey}\cite[Theorem 1.3]{shelahord}
  Let $f$ be an additive coloring of an infinite dense total order $X$ by a finite semigroup $(C,+)$. Then there is a homogeneous subset of $X$ that is somewhere dense.
\end{fact}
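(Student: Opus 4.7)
My plan is to combine an iterated application of Fact~\ref{ramsey} with the additivity hypothesis and the density of $X$. First, applying Fact~\ref{ramsey} to any infinite subset of $X$ yields an infinite homogeneous sequence $a_1 < a_2 < \cdots$ with some color $c \in C$; additivity applied to $a_1 < a_2 < a_3$ gives $c + c = f(a_1,a_2) + f(a_2,a_3) = f(a_1,a_3) = c$, so $c$ is an idempotent in $(C,+)$.

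Next I would exploit this idempotency to define derived colorings that are finer than $f$. For $x \in X$ below almost all $a_n$, additivity gives $f(x, a_m) = f(x, a_n) + c$ whenever $n \le m$ are large enough, and the identity $c + c = c$ forces the sequence $(f(x, a_n))_n$ to stabilize; denote its stable value by $d(x) \in C$. Symmetrically, using a sequence accumulating from above a point, define a left-handed derived coloring $e(x)$. Since $(d,e)$ takes values in a finite set and $X$ is dense, a pigeonhole inside any nonempty open subinterval of $X$ produces a nonempty open interval $I$ and fixed colors $(d,e)$ such that $H_0 := \{x \in I : (d(x),e(x)) = (d,e)\}$ is dense in $I$. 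A further application of Fact~\ref{ramsey} inside $H_0$ then provides an infinite $f$-homogeneous subset with some color $c'$, which by the same calculation as for $c$ is idempotent. The crucial combinatorial ingredients are the identities $d + c' = d$, $c' + e = e$, and $c' + c' = c'$, which all follow from additivity applied across the relevant approximating sequences. Leveraging these identities together with the density of $H_0$ in $I$, I would construct a homogeneous subset dense in $I$ by recursion on a countable base of open subintervals: at each stage one extends a finite homogeneous tuple by a point in a prescribed open subinterval of $I$, and the additivity identities guarantee that the new point preserves color $c'$ against all previously chosen points.

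The principal obstacle is precisely the passage from the infinite homogeneous subset provided by Fact~\ref{ramsey} to a \emph{somewhere dense} one: in a scattered order only the conclusion of Fact~\ref{ramsey} holds, so the density of $X$ must enter essentially, both via the pigeonhole step that produces the dense preimage $H_0$ and via the recursive extension step. Correctly formulating the derived colorings $d, e$ so that the right additivity identities persist through the iteration, and verifying that the recursion cannot get stuck when extending a finite homogeneous tuple into a prescribed open interval, is the delicate bookkeeping at the heart of the proof.
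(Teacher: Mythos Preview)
The paper does not prove this fact; it cites \cite[Theorem~1.3]{shelahord} without argument. Turning to the proposal itself, there is a genuine gap at the extension step.

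The identities you derive do not pin down $f(x,z)$ for arbitrary $x<z$ in $H_0$. (A minor correction first: from $d=d(h_1)=f(h_1,a_n)=f(h_1,h_2)+f(h_2,a_n)=c'+d$ one gets $c'+d=d$, and dually $e+c'=e$, not what you wrote.) For a previously chosen homogeneous point $x_m$ and a candidate $z\in J\cap H_0$, all your data yield is
\[
 d=d(x_m)=f(x_m,a_n)=f(x_m,z)+f(z,a_n)=f(x_m,z)+d,\qquad e=e(z)=e(x_m)+f(x_m,z)=e+f(x_m,z),
\]
so $f(x_m,z)$ is merely \emph{some} element with $f(x_m,z)+d=d$ and $e+f(x_m,z)=e$; in a general finite semigroup there can be several such elements besides $c'$. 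Hence nothing forces $f(x_m,z)=c'$, and the recursion may stall at the first insertion. The defect is structural: $d$ and $e$ are tied to two fixed global sequences and record only eventual behaviour toward them, whereas inserting a point into a prescribed small $J$ requires \emph{local} information. What is actually needed is an interval $I$, a dense $D\subseteq I$, and a set $R\subseteq C$ such that for every $x\in D$, every $c\in R$, and every $z>x$ in $I$ there is $y\in(x,z)\cap D$ with $f(x,y)=c$ (and symmetrically on the left); such an $R$ is then closed under $+$, hence contains an idempotent $c_0$, and with $c_0$ recurrent in this local sense the back-and-forth runs unobstructed. Shelah obtains such $I,D,R$ by iterating the pigeonhole step, shrinking both the interval and the dense set until the locally recurrent colors stabilise and their witnesses can always be taken inside the current dense set; a single pass with global sequences cannot achieve this.
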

Shelah's definition of additive coloring is superficially more general, but our version easily implies his version.
\begin{corollary}\label{cantorramsey}
  Let $f$ be an additive coloring of $\C$. Then $\C$ has a somewhere dense homogeneous subset.
\end{corollary}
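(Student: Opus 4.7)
The plan is to derive this essentially as a direct application of \cref{dloramsey}. The first thing to check is that the hypotheses of \cref{dloramsey} apply to $X=\C$, i.e., that $\C$ is an infinite dense total order. Infiniteness is clear. For density, given $x<y$ in $\C$, let $n$ be the first coordinate at which they differ; then $x_n=0<1=y_n$, and one can interpolate by choosing a sequence agreeing with $x$ up to position $n$, taking value $0$ at position $n$, and then being eventually constant $1$ (avoiding the removed endpoint $1^\infty$). So $\C$ is indeed a dense total order.

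With that in hand, \cref{dloramsey} produces a subset $A\subseteq\C$ that is homogeneous for $f$ and somewhere dense in the order topology on $\C$. The second thing to check is that this is the same notion of ``somewhere dense'' needed by the statement of the corollary. This is immediate from the remark in \cref{sectoppre} that the order topology on $\C$ coincides with the subspace topology inherited from the product topology on $\overline\C$, so there is no ambiguity in what ``somewhere dense homogeneous subset of $\C$'' means.

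There is really no obstacle here: the corollary is a one-line consequence of \cref{dloramsey} once the trivial verification that $\C$ is an infinite dense total order has been made. The substantive content lies entirely in \cref{dloramsey} itself, which is cited from \cite{shelahord}. The only reason this deserves a separate corollary is to highlight that Ramsey-theoretic arguments on additive colorings can be carried out directly on the Cantor order $\C$, which is the setting in which the rest of the paper will apply them.
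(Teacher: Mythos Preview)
There is a genuine gap: $\C$ is \emph{not} a dense total order, so \cref{dloramsey} does not apply to it directly. The lexicographic order on $\overline\C=\{0,1\}^\N$ has jumps, namely the pairs $(w01^\infty,w10^\infty)$ for each finite word $w$, and removing only the two global endpoints $0^\infty,1^\infty$ leaves all of these internal jumps in $\C$. Your interpolation argument breaks exactly here: if $x=(x_0,\dots,x_{n-1},0,1,1,1,\dots)$ and $y=(x_0,\dots,x_{n-1},1,0,0,0,\dots)$, then the sequence you propose---agreeing with $x$ through position $n$ and then constantly $1$---is $x$ itself, not a point strictly between $x$ and $y$. Indeed no such point exists; $(x,y)$ is a jump. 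The paper uses this jump structure of $\C$ throughout (see the definition of jump in \cref{sectoppre} and the jump-normalized colorings later), so this is not a peripheral issue.

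The paper's fix is short but essential: restrict $f$ to the cocountable subset $X\subseteq\C$ of \cref{realsincantor}, which is order isomorphic to $\R$ and therefore is an infinite dense total order. Apply \cref{dloramsey} to $X$ to get a homogeneous subset $A\subseteq X$ somewhere dense in $X$; since $X$ is dense in $\C$, the set $A$ is also somewhere dense in $\C$.
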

\begin{proof}
  Apply \cref{dloramsey} to the subset of \cref{realsincantor}, which is dense.
\end{proof}

\subsection{Logic}\label{monadicviafo}
Let $\msc L$ be a \textbf{multi-sorted relational first-order language}, that is, a set $I$ of sorts, a set of relation symbols, and a map $a$ from the relation symbols to tuples of sorts. An atomic $\msc L$-formula in variables $x_k$ of sorts $i_k$ is a pair $R(x_{k_1},\dots,x_{k_n})$ of a relation symbol $r$ and a tuple of variables such that $(i_{k_1},\dots,i_{k_n})=a(R)$. General $\msc L$-formulas are built as usual from atomic ones using the Boolean connectives $\top,\land,\lnot$ and quantifiers $\exists$ for each sort, from which the symbols $\bot,\lor,\Rightarrow,\Leftarrow,\Leftrightarrow,\forall$ can be defined. Satisfaction of formulas can be defined recursively as usual. We will always assume that there is an equality relation symbol for each sort that is interpreted correctly.\par
\textbf{Definable} without further qualification means definable without parameters. Here we only study decidability and definability results, and will thus regard any two representations of a class of formulas as equivalent if there is a computable interpretation. In particular, we assume that all first-order languages are relational by replacing an $n$-ary function symbol by an $(n+1)$-ary relation symbol for its graph.\par
We now define monadic formulas and the monadic theory of a structure. Higher-order logic can be defined as multi-sorted first-order logic. For instance, to obtain the \textbf{monadic language} for a one-sorted first-order language $\msc L$ with sort $X$, we add a sort $\pows X$ for the power set and a binary relation symbol $\in$ on $X\times\pows X$. Thus, any first-order $\msc L$-structure induces a monadic $\msc L$-structure with the same universe. In this structure, it is easy to define the Boolean algebra structure on $\pows X$, for example the symbols $\subseteq,\emptyset,\cap,\cup,\setminus$. Moreover, being a singleton set is definable as being an atom in this Boolean algebra, and we introduce a predicate $\isAtom$ for this purpose.\par
However, it is also possible to reduce monadic logic to one-sorted first-order logic. In this case, the sort represents the power set, and we replace each relation symbol $R(x_1,\dots,x_m,Y_1,\dots,Y_n)$ with element variables $x_i$ and set variables $Y_j$ by the relation symbol $R'(X_1,\dots,X_m,Y_1,\dots,Y_n)$ interpreted as ``there are elements $x_1,\dots,x_m$ such that $X_i=\set{x_i}$ for all $i$ and $R(x_1,\dots,x_m,Y_1,\dots,Y_n)$ holds''. Clearly, this language defines the same relations on $\pows X$ as the original language and they interpret each other. Whenever we refer to usual first-order properties, like elementary embeddings, in the monadic setting, they are to be interpreted in this language.
\begin{remark}\label{reductionfouniv}
  There are alternative ways to express first-order relations $R$ as monadic relations. For example, we could replace $R(x_1,\dots,x_m,Y_1,\dots,Y_n)$ by a relation symbol $R'(X_1,\dots,X_m,Y_1,\dots,Y_n)$ with the interpretation ``for all $x_1\in X_1,\dots,x_m\in X_m$, the relation $R(x_1,\dots,x_m,Y_1,\dots,Y_n)$ holds.
\end{remark}
\textbf{Restricted monadic logic} can be handled in exactly the same way, replacing the power set $\pows X$ by a subset $\mc B$ in this discussion. In our discussions, $\mc B$ always contains all singletons, enabling the reduction to first-order logic, but it will not always be a Boolean subalgebra.\par
We use Gale-Stewart games, usually only called \textbf{games}, and their determinacy for Borel sets \cite{boreldet}. For standard notation, see also \cite[\nopp 20,21]{kechrisdst}.

\section{Constructing Cantor sets}\label{sectopo}
Consider a uniform labeling of the reals as defined in \cref{secsketch}. Uniformly labeled Cantor subsets with the same comeager label as the reals exist in all reasonable circumstances. The purely topological argument is provided here as \cref{findcantor}. Uniformly labeled Cantor subsets whose comeager label differs from the comeager label of the reals might not exist. Whenever they do not exist, this follows from Baire category arguments in \cref{secdecproofa}.
\begin{lemma}\label{goodnhd}
  Let $C\subseteq\R$ be compact, let $U$ be a neighborhood of $C$ and let $D_1,D_2\subseteq\R$ be dense. Then there are $a_1<b_1<a_2<\dots<a_n<b_n$ with $a_i\in D_1,b_i\in D_2$ such that $C\subseteq\bigcup_i\ointer{a_i,b_i}\subseteq U$.
\end{lemma}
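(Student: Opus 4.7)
The plan is to reduce the construction to pointwise choices inside the finitely many connected components of $U$ that meet $C$, using density only at the very end.

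First, decompose the open set $U\subseteq\R$ into its (at most countably many) connected components, which are pairwise disjoint open intervals $U_j=\ointer{\alpha_j,\beta_j}$. Since $C$ is compact and the $U_j$ form a pairwise disjoint open cover of $C$, only finitely many meet $C$; relabel so that these are $U_1,\dots,U_n$ and $C\cap U_j\ne\emptyset$ for each $j\le n$. Consequently $C=\bigsqcup_{j=1}^n(C\cap U_j)$.

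For each such $j$, let $m_j=\inf(C\cap U_j)$ and $M_j=\sup(C\cap U_j)$, which are finite as $C$ is bounded. Both values lie in $C$ (since $C$ is closed) and hence in $U$. I claim $\alpha_j<m_j\le M_j<\beta_j$: if, say, $m_j=\alpha_j$, then $m_j$ would be a boundary point of $U$, contradicting $m_j\in U$ (the key point being that the components of an open set are clopen in $U$). Thus $m_j,M_j\in C\cap U_j$, so both the minimum and the maximum are attained.

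Finally, use that $D_1,D_2$ are dense to pick $a_j\in D_1\cap\ointer{\alpha_j,m_j}$ and $b_j\in D_2\cap\ointer{M_j,\beta_j}$ for each $j\le n$. Then $\ointer{a_j,b_j}\subseteq U_j\subseteq U$, the intervals $\ointer{a_j,b_j}$ are pairwise disjoint as the $U_j$ are, and they cover $C$: any $x\in C$ lies in some $U_{j_0}$ with $a_{j_0}<m_{j_0}\le x\le M_{j_0}<b_{j_0}$. Pairwise distinctness of the $2n$ chosen points is automatic because $a_j<b_j$ inside each $U_j$ while the $U_j$ themselves are disjoint. There is no substantive obstacle; the only mild subtlety is confirming that the extrema of $C\cap U_j$ are attained strictly inside $\ointer{\alpha_j,\beta_j}$, for which the hypothesis $C\subseteq U$ is essential.
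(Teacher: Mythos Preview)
Your proof is correct and essentially identical to the paper's: cover $C$ by finitely many disjoint open subintervals of $U$, then use density of $D_1,D_2$ to pick endpoints strictly between each subinterval's endpoints and the extrema of $C$ inside it. The only cosmetic differences are that the paper first explicitly replaces $U$ by an open subset (you tacitly assume $U$ open) and works with an arbitrary finite disjoint cover by bounded intervals rather than the connected components.
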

\begin{proof}
  We may assume that $U$ is open. Then $U$ is a union of bounded open intervals. Since $C$ is compact, we may assume that the union is finite and disjoint. Let $a_1<b_1\le a_2<\dots\le a_n<b_n$ be such that $U=\bigcup_i\ointer{a_i,b_i}$. As $C$ is closed with $a_i,b_i\notin U\supseteq C$, there are $a_i',b_i'\in\R$ such that $a_i<a_i'<b_i'<b_i$ and $\rointer{a_i,a_i'},\lointer{b_i',b_i}$ are disjoint from $C$. Since $D_1$ and $D_2$ are dense, there are $a_i''\in D_1\cap\ointer{a_i,a_i'}$ and $b_i''\in D_2\cap\ointer{b_i',b_i}$. Clearly $C\subseteq\bigcup_i\ointer{a_i'',b_i''}\subseteq U$.
\end{proof}
Given a meager $F_\sigma$ set $A$ and Cantor sets $B_1,B_2,\dots$, we find a Cantor set $C$ disjoint from $A$ such that $C$ includes many $B_n$'s but $\bigcup_nB_n$ is meager in $C$. Moreover, we can control the jumps of $C$.
\begin{lemma}\label{findcantor}
  Let $A\subseteq\R$ be meager $F_\sigma$, let $B=\bigcup_{n\in\N}B_n$ be with each $B_n\subseteq\R$ closed nonempty, and let $D_1,\dots,D_\ell\subseteq\R$ be countable dense, such that $A,B_1,B_2,\dots,D_1,\dots,D_\ell$ are pairwise disjoint and each nonempty open subset of $\R$ includes some $B_n$. Let $E\subseteq\set{1,\dots,\ell}\times\set{1,\dots,\ell}$ be nonempty. Then there is a Cantor set $C\subseteq\R$ such that
  \begin{parts}
  \item $C$ is disjoint from $A$;
  \item $B$ is meager in $C$;
  \item each nonempty relatively open subset of $C$ includes some $B_n$;
  \item for each jump $(a,b)$ in $C$, there is $(i,j)\in E$ with $a\in D_i$ and $b\in D_j$;
  \item if $(i,j)\in E$, then the set of $a\in C$ such that there is a jump $(a,b)$ in $C$ with $a\in D_i$ and $b\in D_j$ is dense in $C$;
  \item every element of $C\cap\bigcup_iD_i$ is a component of a jump in $C$.
  \end{parts}
\end{lemma}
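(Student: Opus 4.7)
The plan is to construct $C$ as a Cantor set via a tree of nested closed intervals $(I_s)_{s\in\{0,1\}^{<\omega}}$, with $I_{s0},I_{s1}$ disjoint closed subintervals of $I_s$ in left-to-right order separated by an open gap $(a_s,b_s)$; set $C:=\bigcap_n\bigcup_{|s|=n}I_s$ with the two extreme endpoints of $I_\emptyset$ removed. If the diameters of the $I_s$ shrink to $0$ along every branch, $C$ is order-isomorphic to $\C$ by \cref{cantorderisom}, and its jumps are exactly the pairs $(a_s,b_s)$. Fix in advance a decomposition $A=\bigcup_k A_k$ into closed nowhere dense sets, an enumeration $\bigcup_i D_i=\{d_1,d_2,\ldots\}$, and a labeling $\pi\colon\{0,1\}^{<\omega}\to E$ such that every $(i,j)\in E$ appears as $\pi(s')$ for some $s'$ properly extending any given $s$.

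I build the $I_s$ together with pairwise distinct indices $n_s\in\N$ by recursion on $|s|$, maintaining for every $s$: (i) $\diam I_s\le 2^{-|s|}$ and $I_s\cap A_k=\emptyset$ for $k\le|s|$; (ii) $a_s\in D_i$ and $b_s\in D_j$ where $(i,j)=\pi(s)$; (iii) $B_{n_s}\subseteq I_s^\circ$; (iv) the gap $(a_s,b_s)$ is disjoint from the closed set $\bigcup_{s'\subseteq s}B_{n_{s'}}$, forcing every committed $B_{n_{s'}}$ to persist under all subsequent refinements, so $B_{n_s}\subseteq C$; (v) if $d_{|s|}\in I_s^\circ$ and $d_{|s|}$ is not an endpoint of an earlier-level gap, then $d_{|s|}\in(a_s,b_s)$, excluding $d_{|s|}$ from $C$. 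Feasibility at each step reduces to showing that $I_s^\circ\setminus(\bigcup_{s'\subseteq s}B_{n_{s'}}\cup\bigcup_{k\le|s|+1}A_k)$ is open and nonempty: the $A_k$ are nowhere dense, and the $B_{n_{s'}}$'s cannot cover the connected open $I_s^\circ$ by the standard clopen argument (finitely many pairwise disjoint relatively closed subsets of a connected space jointly cover it only if one of them equals the whole space, and pairwise disjointness of the $B_n$ together with $B_{n_s}\subseteq I_s^\circ$ being nonempty with $B_{n_s}$ closed in $\R$ rules out any $B_{n_{s'}}=I_s^\circ$). Inside this available region I choose $a_s\in D_i$ and $b_s\in D_j$ by density (and around $d_{|s|}$ if required), then produce $I_{s0},I_{s1}$ enclosing freshly picked $B_{n_{s0}},B_{n_{s1}}$ using \cref{goodnhd}.

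Properties (a), (d), (e) are immediate from (i), (ii), and the cofinality of $\pi$. Property (c) holds because $B_{n_s}\subseteq I_s^\circ\cap C$ by (iii)--(iv), and every nonempty open subset of $C$ contains a basic open $I_s^\circ\cap C$. For (b), fix $n$; since the $n_s$ are pairwise distinct, at most one $s$ has $n_s=n$, so every basic open $I_s^\circ\cap C$ has a descendant $s'\supseteq s$ with $n_{s'}\ne n$, yielding $B_{n_{s'}}\subseteq(I_{s'}^\circ\cap C)\setminus B_n$ by pairwise disjointness of the $B_n$; hence $B_n$ is nowhere dense in $C$ and $B$ is meager. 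For (f), every element of $C$ is either a jump endpoint $a_{s'}$ or $b_{s'}$ (which lies in some $D_i$ and is part of the jump $(a_{s'},b_{s'})$) or a two-sided limit of the tree, and (v) forces every $d_m\in\bigcup_i D_i$ either to be a prior jump endpoint or to land in a gap and hence outside $C$. The main obstacle I expect is precisely (iv): preserving committed $B$-sets while still fitting a gap meeting all other constraints; the connectedness/clopen argument resolves it, crucially exploiting pairwise disjointness of the $B_n$.
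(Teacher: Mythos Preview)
Your binary-tree construction forces exactly one gap $(a_s,b_s)$ per node, with $I_{s0}=[p_s,a_s]$ and $I_{s1}=[b_s,q_s]$ (this is what you need for the jumps of $C$ to be precisely the $(a_s,b_s)$ and for invariant (iv) to guarantee $B_{n_s}\subseteq C$). But then invariants (i) and (iv) are genuinely incompatible. Invariant (i) demands $I_{s0}\cap A_{|s|+1}=I_{s1}\cap A_{|s|+1}=\emptyset$, which forces
\[
A_{|s|+1}\cap I_s\;\subseteq\; I_s\setminus(I_{s0}\cup I_{s1})\;=\;(a_s,b_s),
\]
so the \emph{single} gap must swallow all of $A_{|s|+1}\cap I_s$. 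Invariant (iv) simultaneously demands that this same gap avoid every committed $B_{n_{s'}}$. Now $A_{|s|+1}\cap I_s$ can easily straddle such a $B_{n_{s'}}$: take $A_{|s|+1}\cap I_s=\{p,q\}$ with $p<q$ and any committed point of $\bigcup_{s'\subseteq s}B_{n_{s'}}$ lying in $(p,q)$. Then the gap must contain $[p,q]$ yet miss that point---impossible. You cannot escape by placing $B_{n_\emptyset}$ cleverly at stage $0$, because $B_{n_\emptyset}$ is fixed before you know how later $A_k$'s sit inside the shrinking $I_s$'s, and $A$ may well be dense. The same conflict arises between (iv) and (v): if the point $d_{|s|}$ to be excluded lies far from the committed $B$'s inside $I_s$, a single interval cannot contain $d_{|s|}$ and miss those $B$'s while also respecting the diameter bound. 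Your feasibility claim only checks that the ``available region'' is nonempty, not that it contains an \emph{interval} meeting all constraints at once.

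The paper's proof sidesteps this by abandoning the rigid binary tree. It maintains a decreasing sequence $T_n$ (each a finite union of closed intervals) and an increasing $S_n\subseteq T_n$ carrying the committed $B$'s. At stage $n{+}1$ it uses \cref{goodnhd} to cover the compact set $(A_n\cap T_n)\cup\{d_{in}\}_i$ by \emph{several} small open intervals with endpoints in the prescribed $D_i,D_j$, all chosen inside $\mathring{T_n}\setminus S_{n+1}$; removing these intervals gives $T_{n+1}$. Because multiple gaps are created per component, the points of $A_n$ and the $d_{in}$'s can be excised by short intervals that thread between the committed $B$'s, and no single gap has to span a large set. This multi-gap flexibility is the missing idea in your sketch.
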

\begin{proof}
  Note that $B$ is meager. Indeed, otherwise there is $n_0$ such that $B_{n_0}$ has nonempty interior. Then no proper nonempty open subset of $B_{n_0}$ includes a $B_m$ by disjointness of the $B_n$. This contradicts that every nonempty open subset includes some $B_n$. Adding a countable dense subset disjoint from the meager $B\cup\bigcup_iD_i$, we may assume that $A$ is dense. Similarly, we may assume $\#B_n\ge2$ for all $n$. Namely, decompose $\N$ into pairs $\set{i,j}$ such that each nonempty open subset includes some $B_i\cup B_j$ and replace the $B_n$ by the $B_i\cup B_j$.\par
  Write $A=\bigcup_{n\in\N}A_n$ with $A_n$ compact nowhere dense and $D_i=\setb{d_{in}}{n\in\N}$.
  \begin{sublemma}\label{lemfindcantor}
    Let $e\colon\N\to E$. There are sequences $(S_n)_{n\in\N},(T_n)_{n\in\N}$ of subsets of $\R$ such that for all $n\in\N$
    \begin{enumerate}[label=(\roman*)]
    \item $S_n$ is closed, nowhere dense, nonempty, and disjoint from $A$;
    \item $\R\setminus T_n$ is a finite union of open intervals, and $T_n$ has no isolated points;
    \item for each complementary interval $\ointer{a,b}$ of $T_n$ we have that $a\in D_i$ and $b\in D_j$ for $e(k)=(i,j)$ with $k$ minimal such that $\ointer{a,b}$ is a complementary interval of $T_k$;
    \item $S_n\subseteq T_n$;
    \end{enumerate}
    and if $n>0$
    \begin{enumerate}[label=(\roman*),resume]
    \item $S_{n-1}\subseteq S_n$ and $T_n\subseteq T_{n-1}$;
    \item each complementary interval of $T_{n-1}$ is a complementary interval of $T_n$;
    \item if $U$ is a connected component of $T_{n-1}$, then $U\cap S_n$ includes some $B_i$;
    \item each connected component of $T_{n-1}$ is disconnected in $T_n$;
    \item $T_n$ is disjoint from $A_{n-1}\cup(\setb{d_{i,n-1}}{i\in\set{1,\dots,\ell}}\setminus\partial T_{n-1})$.
    \end{enumerate}
  \end{sublemma}
  \begin{proof}
    We construct $S_n,T_n$ by recursion on $n$. Start with an arbitrary $S_0\subseteq\R\setminus A$ with $S_0\cong\overline\C$ and $T_0=\R$.\par
    Given $S_n,T_n$ satisfying (i)-(ix), let $U_1,\dots,U_m$ be the connected components of $T_n$. By (ii) each $U_k$ includes an open interval. Hence, we can pick $B_{i_k}\subseteq U_k$. Set $S_{n+1}=S_n\cup\bigcup_{k=1}^mB_{i_k}$. Then (i) and (vii) hold.\par
    Pick $u_j\in\mathring{U_j}\setminus S_{n+1}$ for $j\in\set{1,\dots,m}$. By (iii), $\partial T_n\subseteq\bigcup_kD_k$. Since $A_n$ is disjoint from $\bigcup_kD_k$ and from $S_{n+1}$, we deduce that
    \[(A_n\cap T_n)\cup(\setb{d_{in}}i\cap\mathring{T_n})\cup\setb{u_j}j\subseteq\mathring{T_n}\setminus S_{n+1}.\]
    Write $(i_n,j_n)=e(n)$. \Cref{goodnhd} yields finitely many $a_\ell\in D_{i_n}\setminus\partial T_n,b_\ell\in D_{j_n}\setminus\partial T_n$ such that $a_1<b_1<a_2<\dots<b_L$ and
    \[(A_n\cap T_n)\cup(\setb{d_{in}}i\cap\mathring{T_n})\cup\setb{u_j}j\subseteq\bigcup_\ell\ointer{a_\ell,b_\ell}\subseteq T_n\setminus S_{n+1}.\]
    Set $T_{n+1}=T_n\setminus\bigcup_\ell\ointer{a_\ell,b_\ell}$.\par
    Part (ii) follows as $a_\ell,b_\ell\in\mathring{T_n}$ are pairwise distinct, (iii) and (v) are clear, and (iv) follows from
    \[S_{n+1}\subseteq S_n\cup\bigcup_kU_k=T_n\text{ and }S_{n+1}\cap\bigcup_\ell\ointer{a_\ell,b_\ell}\subseteq S_{n+1}\cap T_n\setminus S_{n+1}=\emptyset.\]
    Part (vi) follows from $\ointer{a_\ell,b_\ell}\subseteq T_n$ and (viii) from $\setb{u_j}j\subseteq\bigcup_l\ointer{a_\ell,b_\ell}$. Since
    \[A_n\cap T_{n+1}=A_n\cap T_n\setminus\bigcup_\ell\ointer{a_\ell,b_\ell}=\emptyset,\]
    part (ix) follows as
    \[\text{if }d_{jn}\notin\partial T_n\text{, then }d_{jn}\notin\overline{T_n}\supseteq T_{n+1}\text{ or }d_{jn}\in\setb{d_{in}}i\cap\mathring{T_n}\subseteq\bigcup_\ell\ointer{a_\ell,b_\ell}\subseteq\R\setminus T_{n+1}.\qedhere\]
  \end{proof}
  Let $e\colon\N\to E$ be such that every fiber is infinite. Let $S_n,T_n$ satisfy (i)-(ix). Let $C$ be $\bigcap_nT_n$ with minimum and maximum removed if they exist.\par
  Then (a) follows from (ix). For (c) let $a,b\in\R$ be such that $\ointer{a,b}\cap C\ne\emptyset$, say $x\in\ointer{a,b}\cap C$. Since $A$ is dense, there are $a',b'\in A$ with $a<a'<x<b'<b$. By (a) we obtain $n$ such that $a',b'\notin T_n$. Since $x\in C\subseteq T_n$, there is a connected component $U$ of $T_n$ with $x\in U\subseteq\ointer{a,b}$. By (vii) the set $\ointer{a,b}\cap C\supseteq U\cap S_{n+1}$ includes some $B_i$.\par
  Since $\#B_n\ge2$, by (c) the space $\overline C$ has no isolated points and (b) follows by the same argument as meagerness of $B$ in $\R$ above. Thus, as all $T_n$ are closed, $\overline C=\bigcap_nT_n$. Since $S_0\subseteq\overline C$, the set $C$ is nonempty. Since $A$ is dense, $\overline C$ has empty interior by (a). Using \cref{subcantoriff} in $\R\cup\set{-\infty,\infty}$, we conclude that $C$ is a Cantor set.\par
  By (ii) and (vi) an interval is a complementary interval of $C$ if and only if it is a complementary interval of some $T_n$. Thus (d) follows from (iii) and (f) from (ix). It remains to show (e). Since $C$ has empty interior, for each nonempty relatively open subset $V$ of $C$ for cofinitely many $n$ there is a connected component $U$ of $T_n$ with $U\cap C\subseteq V$. Pick such an $n$ with $e(n)=(i,j)$ and such an $U$. By (viii) and (iii), there is a complementary interval $\ointer{a,b}\subseteq U$ with $a\in D_i$ and $b\in D_j$. We conclude
  \[V\supseteq U\cap C\supseteq\inter{a,b}\cap C=\set{a,b}.\qedhere\]
\end{proof}
\begin{corollary}\label{findcantornoends}
   Let $A\subseteq\R$ be meager $F_\sigma$, and let $B=\bigcup_{n\in\N}B_n$ be with each $B_n\subseteq\R$ closed nonempty, such that $A,B_1,B_2,\dots$ are pairwise disjoint and each nonempty open subset of $\R$ includes some $B_n$. Then there is a Cantor set $C\subseteq\R$ such that
  \begin{parts}
  \item $C$ is disjoint from $A$.
  \item $B$ is meager in $C$.
  \item Each nonempty relatively open subset of $C$ includes some $B_n$.
  \end{parts}
\end{corollary}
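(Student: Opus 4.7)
The plan is to reduce \cref{findcantornoends} to \cref{findcantor} by supplying trivial values for the additional parameters $\ell, D_1, \dots, D_\ell, E$. I would take $\ell = 1$, $E = \set{(1,1)}$, and construct a single countable dense set $D_1 \subseteq \R$ disjoint from $A \cup B$. Then conclusions (a), (b), (c) of the lemma are literally conclusions (a), (b), (c) of the corollary, and conclusions (d), (e), (f) about jumps are extra information that can be discarded.

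The only step requiring genuine argument is producing $D_1$, and even there the work is small. First I would observe that $B$ is meager: if some $B_n$ had nonempty interior $U$, then any proper nonempty open subset of $U$ would, by hypothesis, include some $B_m$, contradicting the pairwise disjointness of the $B_n$. Since $A$ is meager $F_\sigma$ by hypothesis and $B$ is $F_\sigma$ as a countable union of closed sets, $A \cup B$ is meager $F_\sigma$. Fixing a countable basis $(V_n)_{n \in \N}$ of $\R$ and using Baire's theorem (no nonempty open set is meager), I would pick $d_n \in V_n \setminus (A \cup B)$ for each $n$ and set $D_1 = \set{d_n : n \in \N}$. This $D_1$ is countable and dense in $\R$, and by construction disjoint from $A$ and from every $B_n$, so $A, B_1, B_2, \dots, D_1$ are pairwise disjoint as the lemma demands.

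Finally, applying \cref{findcantor} with this data produces the desired Cantor set $C$. The main (and only) potential obstacle is the meagerness of $B$, needed to justify that $D_1$ can be chosen disjoint from $A \cup B$; this is handled by the short interior-versus-disjointness argument above, after which the reduction is mechanical.
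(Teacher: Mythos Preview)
Your proposal is correct and follows exactly the paper's approach: the paper's proof is the single line ``Let $\ell=1$ and $D_1$ be a countable dense subset disjoint from $A,B$ in \cref{findcantor}.'' You have merely filled in the justification for the existence of $D_1$ (via meagerness of $B$, the same argument the paper gives inside the proof of \cref{findcantor}) and made the forced choice $E=\set{(1,1)}$ explicit.
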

\begin{proof}
  Let $\ell=1$ and $D_1$ be a countable dense subset disjoint from $A$ and $B$. Apply \cref{findcantor}.
\end{proof}

\subsection{Cantor subsets}
We present lemmas on the point set topology of the Cantor set.
\begin{lemma}
  Let $X$ be a Hausdorff space.
  \begin{parts}
  \item Any countable family of copies of $\overline\C$ in $X$ has a countable disjoint refinement consisting of copies of $\overline\C$.\label{refincantor}
  \item Suppose that $X$ is second countable. Let $(C_{ni})_{n\in\N,i\in I\amalg\tsingl}$ be a countable family of copies of $\overline\C$ in $X$ such that all $C_{ni}$ have empty interior and for all $i\in I$ each nonempty open subset includes some $C_{mi}$. Then the refinement of (a) can be chosen such that for each nonempty open $U\subseteq X$ there is a $C_{ni}\subseteq U$ that is also contained in the refinement.\label{refincantordense}
  \end{parts}
\end{lemma}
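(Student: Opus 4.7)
The plan for (a) is to enumerate the given copies of $\overline\C$ as $C_1,C_2,\dots$ and set $D_n=C_n\setminus\bigcup_{m<n}C_m$. Each $C_m$ is compact in the Hausdorff space $X$, hence closed, so $D_n$ is open in $C_n$. Because $C_n\cong\overline\C$ is zero-dimensional and second-countable, $D_n$ decomposes as a countable disjoint union of clopen subsets of $C_n$ (list a countable basis of clopen sets of $C_n$ and subtract successive unions). Each nonempty piece is $\cong\overline\C$ by \cref{clopencantor}, and the totality of these pieces is the desired disjoint refinement.

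For (b), the main obstacle is to arrange that for each basic open $U_k$ of $X$ some distinguished $V_k=C_{n_k,i_k}\subseteq U_k$ survives intact in the refinement produced by the construction of (a). In that construction, an input set survives intact precisely when it is disjoint from every earlier set in the enumeration. Since a single $C_{n,i}$ may intersect infinitely many prospective $V_k$, one cannot process all $V_k$ before all other $C_{n,i}$ in order type $\omega$, so I will interleave the enumeration and choose each $V_k$ adapted to this interleaving.

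Concretely, fix a countable basis $(U_k)_k$ of $X$ and enumerate the full family $(C_{n,i})_{(n,i)\in\N\times I}$ as $(E_\ell)_\ell$. At stage $k$, pick $V_k=C_{n_k,i_k}\subseteq U_k$ disjoint from $V_1\cup\dots\cup V_{k-1}\cup E_1\cup\dots\cup E_{k-1}$. This is possible because that finite union is closed with empty interior in $X$ (every $C_{n,i}$ has empty interior by hypothesis), so its complement in $U_k$ is nonempty open, and by hypothesis this open set contains some $C_{n,i}$. Then apply the construction from (a) to the interleaved sequence $V_1,E_1,V_2,E_2,V_3,E_3,\dots$.

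When $V_k$ is processed at position $2k-1$, the preceding sets $V_1,\dots,V_{k-1},E_1,\dots,E_{k-1}$ are disjoint from $V_k$ by construction, so $V_k$ enters the refinement as a single clopen piece. When $E_\ell$ is processed at position $2\ell$, only finitely many closed sets precede it, so the relevant difference is open in $E_\ell$ and splits into clopen copies of $\overline\C$ exactly as in (a). A routine check confirms that the resulting family is a disjoint refinement of $\bigcup_{n,i}C_{n,i}$ containing every $V_k$; since any nonempty open $U\subseteq X$ contains some basic $U_k$ with $V_k\subseteq U_k\subseteq U$ in the refinement, the density property of (b) follows.
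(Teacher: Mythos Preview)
Your proof is correct and follows essentially the same approach as the paper. Part (a) is identical; for (b) the paper phrases the construction as a single rearrangement $(C_k')_k$ of the $C_{ni}$ with the property that for every nonempty open $U$ some $C_k'$ lies in $U\setminus(C_0'\cup\dots\cup C_{k-1}')$, while you achieve the same effect by explicitly interleaving chosen witnesses $V_k$ with a fixed enumeration $E_\ell$---but the underlying idea (use nowhere-density to find, for each basic open, a $C_{ni}$ disjoint from finitely many predecessors so that it survives the subtraction of (a)) is the same.
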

\begin{proof}
  For (a), consider a family $(C_n)_{n\in\N}$. Since $X$ is Hausdorff, each $C_n$ is closed. Thus each $C_n\setminus(C_0\cup\dots\cup C_{n-1})$ is open in $C_n$, hence a countable disjoint union of clopen subsets of $C_n$. By \cref{clopencantor} the family of all these clopen subsets is the desired refinement.\par
  For (b), since the $C_{ni}$ are nowhere dense, for all $i\in I$ and finite $A\subseteq\setb{C_{mj}}{m\in\N,j\in I}$ and nonempty open $U$ there is $n$ with $C_{ni}\subseteq U\setminus\bigcup_{C\in A}C$. Thus, using second countability, we may rearrange $(C_{ni})_{ni}$ as $(D_k)_{k\in\N}$ such that for all $i\in I$ and nonempty open $U$ there are $k,n\in\N$ with $C_{ni}=D_k\subseteq U\setminus(D_0\cup\dots\cup D_{k-1})$. If we carry out the construction of (a) with this enumeration, then the conclusion of (b) holds.
\end{proof}
\begin{lemma}\label{projcantorbij}
  Let $\pi_\C\colon\C\to\R$ be as in \cref{cantorquot}. Sending $C$ to its image $\pi_\C(C)$ defines a bijection from meager Cantor subsets of $\C$ to Cantor subsets of $\R$. The inverse maps $C$ to the Cantor-Bendixson derivative $\pi_\C^{-1}(C)'$ of the preimage.
\end{lemma}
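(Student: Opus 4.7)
The plan is to use the closures $\tilde D=\overline D$ (in $\overline\C$) and $\tilde C=\overline C$ (in $\R$) as bridges to classical Cantor sets (those homeomorphic to $\overline\C$), and to read the Cantor--Bendixson derivative $A'$ as $A$ minus the points isolated in the subspace topology of $A$. I would first establish basic properties of $\pi$: continuous, order-preserving, surjective, and open (image of an order-interval is an order-interval); every fibre has size $1$ or $2$, the size-$2$ fibres being exactly the jumps of $\C$; and $\pi$ extends to $\bar\pi\colon\overline\C\to\R\cup\{\pm\infty\}$ with the same properties.

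For the forward map, given a meager Cantor $D\subseteq\C$, I would argue that $\tilde D$ is compact, nonempty, perfect (closure preserves the absence of isolated points), and has empty interior in $\overline\C$ (since $D$ is meager in $\C$ and $\C$ is dense in $\overline\C$), so $\tilde D\cong\overline\C$ by \cref{subcantoriff}. From $D\cong\C$ being a two-ended Cantor space with ends at $\inf$ and $\sup$ in the order, I would deduce that $\tilde D\setminus D$ consists of the two order-extremes of $\tilde D$. Then $\pi(\tilde D)\subseteq\R$ is compact, perfect, and has empty interior in $\R$ (openness of $\pi$), giving a classical Cantor set in $\R$ by \cref{subcantoriff}; hence $\pi(D)=\pi(\tilde D)\setminus\{\min,\max\}\cong\C$ is a Cantor subset of $\R$.

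For the backward map, given a Cantor $C\subseteq\R$, analogously $\tilde C$ is a classical Cantor set in $\R$ with $C=\tilde C\setminus\{\min\tilde C,\max\tilde C\}$. I would then analyze isolated points of the compact preimage $\pi^{-1}(\tilde C)$: they are precisely the ``gap-side'' jump partners, i.e.\ those $x$ such that $\pi(x)$ is an endpoint of a complementary interval of $\tilde C$ and also an image of a jump in $\C$, with $x$ on the gap side. There are countably many such. Removing them yields a compact perfect set (the two-sided-limit points of $\tilde C$ are dense and provide non-isolated preimages everywhere). Then $\pi^{-1}(C)'$ is this perfect set minus the two non-isolated preimages of $\min\tilde C$ and $\max\tilde C$, so $\cong\C$, and meager since $\pi^{-1}(\tilde C)$ is nowhere dense in $\C$ (by openness of $\pi$).

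Finally, I would verify the inverse identities via fibre checks: $\pi(\pi^{-1}(C)')=C$ because each $c\in C$ has a non-isolated preimage by perfectness of $\tilde C$; and $\pi^{-1}(\pi(D))'=D$ because any jump partner $y\notin D$ of some $x\in D$ with $\pi(x)=\pi(y)$ is isolated in $\pi^{-1}(\pi(D))$ --- otherwise $D$-points would approach $y$ on its side of the jump, forcing $y\in\tilde D$ and hence $y\in D$ by the two-end constraint. The main obstacle will be the bookkeeping of isolated points in $\pi^{-1}(\tilde C)$ and $\pi^{-1}(\pi(D))$, together with verifying that removing the obvious isolated points leaves a perfect set (i.e.\ that no new isolated points appear).
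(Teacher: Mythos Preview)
Your approach is essentially the paper's: the paper first passes to the equivalent statement for $\bar\pi\colon\overline\C\to[0,1]$ and copies of $\overline\C$ (your closure-and-remove-endpoints manoeuvre), then uses that $\pi$ is continuous, \emph{closed}, with finite fibers to transfer ``empty interior'' in both directions, and checks perfectness of $\pi(C)$ and of $\pi^{-1}(C)'$ via the fiber structure, exactly as you outline.

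One concrete error: $\pi$ is \emph{not} open. For instance the clopen cylinder $\{x\in\overline\C:x_1=0\}$ maps to the closed interval $[0,\tfrac12]$. The two conclusions you draw from ``openness'' are nonetheless true, but need different justification: that $\pi^{-1}(A)$ has empty interior whenever $A$ does follows because $\pi$ sends every nonempty open interval onto an interval with nonempty interior; that $\pi(C)$ has empty interior whenever $C$ is closed with empty interior follows because $\pi^{-1}(\pi(C))\setminus C$ is countable (jump partners), so a nonempty open set inside $\pi(C)$ would pull back to a nonempty open set inside $C\cup(\text{countable})$, forcing $C$ to have interior by Baire. Also, in your verification of $\pi^{-1}(\pi(D))'\subseteq D$, the step ``$D$-points would approach $y$'' is not automatic: the approaching points of $\pi^{-1}(\pi(D))$ could a priori all be jump partners. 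The fix (which the paper uses) is that $\pi^{-1}(\pi(D))'$ has already been shown perfect, so every one-sided neighborhood of $y$ in it is uncountable and hence contains a non-jump point, which then must lie in $D$.
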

\begin{proof}
  We instead show the equivalent statement that the extension $\pi\colon\overline\C\to\inter{0,1}$ of $\pi_\C$ yields a bijection between meager copies of $\overline\C$ in $\overline\C$ and copies of $\overline\C$ in $\inter{0,1}$. Note that $\pi$ is a continuous closed map with finite fibers. Preimages of arbitrary sets with empty interior have empty interior, and images of closed sets with empty interior have empty interior.\par
  We show that the map is well-defined. It suffices to show that $\pi(C)\cong\overline\C$ whenever $C\cong\overline\C$. By the above $\pi(C)$ is closed and nonempty and has empty interior. No $x$ is isolated in $\pi(C)$ as otherwise $\pi^{-1}(\set x)\cap C$ is nonempty, finite and open in $C$.\par
  We show that the inverse map is well-defined. It suffices to show that $\pi^{-1}(C)'\cong\overline\C$ whenever $C\cong\overline\C$. By the above $\pi^{-1}(C)$ and so $\pi^{-1}(C)'$ are closed and have empty interior. Since $C$ has no isolated points, each isolated point in $\pi^{-1}(C)$ is part of a two-element fiber (of a component of a jump) and the other element of the fiber is not isolated. Hence $\pi^{-1}(C)'$ has no isolated points.\par
  Moreover, this yields $\pi(\pi^{-1}(C)')=\pi(\pi^{-1}(C))=C$.\par
  Clearly $\pi^{-1}(\pi(C))'\supseteq C'=C$, so suppose that $x\in\pi^{-1}(\pi(C))'$. Then $\pi(x)=\pi(y)$ for some $y\in C$. If $x=y$, we are done, and else by symmetry assume $x<y$. Then, as $x$ is not isolated, each open interval $\ointer{z,x}$ meets $\pi^{-1}(\pi(C))$ in uncountably many points and, in particular, in some point that is no component of a jump. Thus $\ointer{z,x}$ meets $C$ and so $x\in\overline C=C$.
\end{proof}

\subsection{Iso-uniform orders}
Here we provide a first construction of uniform sums, which satisfies a stricter uniformity property involving isomorphism classes instead of finite partial types. The description for partial types is in \cref{secdecproofa}.
\begin{definition}
  Let $X\subseteq\R$ be locally closed and let $P$ be a labeling of $X$. We call $P$ \textbf{iso-uniform} if all nonempty convex $I,J\subseteq X$ without endpoints satisfy $(I,P)\cong(J,P)$ as labeled orders.
\end{definition}
If $P$ is clear from the context, we say that $X$ is \textbf{iso-uniformly labeled}. In particular, if $Y\subseteq X$ is locally closed, then we say that $Y$ is iso-uniformly labeled if $P|_Y$ is iso-uniform. Similar conventions apply to notions as $k$-uniformity introduced later.
\begin{definition}
  Let $T$ be a finite set of labeled order types of Cantor sets and points. Let $P$ be a labeling of $X\cong\R$ and $c_0$ be a label. We call $(X,P)$ an \textbf{iso-uniform sum} of $T$ with comeager label $c_0$ if
  \begin{parts}
  \item for every nonempty open $U\subseteq X$ and $t\in T$, there is a $C\subseteq U$ such that $(C,P)$ realizes $t$;\label{defisounifsup}
  \item there are countably many closed nowhere dense $C_i\subseteq X$ such that for each $i$ there is $t\in T$ such that $(C_i,P)$ can be embedded into a realization of $t$ and $\R\setminus\bigcup_iC_i$ has label $c_0$.\label{defisounifsub}
  \end{parts}
\end{definition}
\begin{proposition}
  Let $T$ be a finite set of labeled order types of Cantor sets and points and $c_0$ be a label.
  \begin{parts}
  \item Iso-uniform sums of $T$ are iso-uniformly labeled.
  \item There is an iso-uniform sum of $T$ with comeager label $c_0$.
  \item All iso-uniform sums of $T$ with comeager label $c_0$ are isomorphic.
  \end{parts}
\end{proposition}
\begin{proof}
  If $T$ is empty, all statements are clear, so assume $T$ nonempty. We may also assume that no $t\in T$ has $c_0$ as single realized label as otherwise the iso-uniform sums of $T$ are precisely the iso-uniform sums of $T\setminus\set t$.\par
  For (c), let $P,P'$ be iso-uniform sums of $T$ with comeager label $c_0$, with $(C_i)_{i\in\N},(C_i')_{i\in N}$ as in \cref{defisounifsub}. We first define by recursion partial isomorphisms $f_i\colon A_i\cong A_i'$ with $A_i\supseteq C_0\cup\dots\cup C_{i-1}$ and $A_i'\supseteq C_0'\cup\dots\cup C_{i-1}'$ closed nowhere dense.\par
  Start with $A_0=A_0'=\emptyset$. Given $f_i$, we obtain an order isomorphism $g_i$ between complementary intervals of $A_i$ and $A_i'$ since these are given by jumps in $A_i$ and $A_i'$. For each complementary interval $J$ of $A_i$, by definition $(C_i\cap J,P)$ embeds into a realization of some $t\in T$. The interval $g_i(J)$ includes a realization $D$ of $t$ by \cref{defisounifsup}. Extend $f_i$ by mapping $C_i\cap J$ into $D$ for each $J$. Since $J$ is a complementary interval, this is a partial isomorphism. Repeating this with the inverse of this extension yields $f_{i+1}$.\par
  Set $A=\bigcup_iA_i$ and $A'=\bigcup_iA_i'$. The $f_i$ combine to $f\colon A\cong A'$. By assumption on $T$, every nonempty open subset includes some $C_i$ and some $C_i'$. Thus, $A$ and $A'$ are dense in $\R$. By uniqueness of Dedekind completions, $f$ can be extended to an order automorphism of $\R$ (explicitly, $x\mapsto\sup_{A\ni a\le x}f(a)$). This extension is still an isomorphism of labelings as both $\R\setminus A$ and $\R\setminus A'$ have label $c_0$.\par
  If $(X,P)$ is an iso-uniform sum of $T$ and $Y\subseteq X$ is nonempty convex, then $(Y,P)$ is an iso-uniform of $T$ with the same comeager label. Thus, (a) follows from (c). It remains to prove (b).\par
  For $t\in T$, let $\overline t$ be the labeled order type that equals $t$ if $t$ describes a point and is obtained by adding endpoints of label $c_0$ to $t$ else. Let $C=\sum_{t\in T}\overline t$ be the labeled order type of their lexicographic sum. Let $Q$ be the order type of the jumps of $C$. For a closed embedding $C\to\ointer{0,1}$ of orders, $Q$ equals the order type of the complementary intervals of the image.\par
  We define sets $\mc C_n$ of pairwise disjoint realizations of $C$ and a map $f$ from finite sequences in $Q$ to nonempty convex subsets of $\ointer{0,1}$ by simultaneous recursion. Start with $\mc C_0=\emptyset$ and $f()=\ointer{0,1}$. Given $q=(q_1,\dots,q_n)$, since $T$ is nonempty, we may take a realization $D_q$ of $C$ in $f(q)$ all whose complementary intervals have length at most $\frac12\diam f(q)$. For $r\in Q$, let $f(q,r)$ be the $r$-th complementary interval of $D_q$. Set $\mc C_{n+1}=\setb{D_q}{\abs q\le n}$.\par
  We now define a labeling $P$ of $\ointer{0,1}$. If $x\in\bigcup_n\mc C_n$, label $x$ as in $C$, and else label $x$ with $c_0$. For showing that $(\ointer{0,1},P)$ is an iso-uniform sum, decompose all summands of elements of $\bigcup_n\mc C_n$ into countably many points and copies of $\overline\C$, and let the $C_i$ enumerate all these sets.
\end{proof}

\subsection{Definability in restricted monadic theories}\label{secdefinemeager}
Adding endpoints to $\R,\R\setminus\Q$ or $\C$ does not change the computational power of their restricted monadic theories. This is straightforward. One implication is also a particular case of \cref{binsumdecid}.
\begin{proposition}\label{equivcomp}
  The restricted theories of $\R,\R\setminus\Q,\C$ are computable from each other.
\end{proposition}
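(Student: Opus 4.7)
The plan is to establish pairwise effective reductions between the restricted monadic theories of the three structures via relativization to definable substructures: given a sentence $\phi$ in the monadic language of one structure, we transform it computably into a sentence $\phi'$ in that of another with the same truth value, so equi-computability follows from any cycle of reductions.

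For the reductions between $\R$ and $\C$: \cref{realsincantor} provides a first-order definable cocountable subset $X \subseteq \C$ order-isomorphic to $\R$. Relativizing each point quantifier of a sentence about $\R$ to $X$ and each set quantifier to subsets of $X$ yields a sentence about $\C$ with the same truth value. Conversely, by \cref{subcantoriff} a subset of $\R$ homeomorphic to $\overline\C$ is characterizable in the monadic language as a nonempty, compact, isolated-point-free subset with empty interior, and removing its two order-endpoints gives a subset order-isomorphic to $\C$ via \cref{cantorderisom}; relativize any sentence about $\C$ to such a substructure. The truth value is independent of the choice since all such substructures are isomorphic.

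For the reductions between $\R$ and $\R \setminus \Q$: from \cref{irratuniq}, monadically quantify over an arbitrary countable dense $A \subseteq \R$ and relativize each sentence about $\R \setminus \Q$ to $\R \setminus A \cong \R \setminus \Q$; the outcome does not depend on $A$. In the reverse direction I would factor through $\C$ by exhibiting a Cantor subset inside $\R \setminus \Q$—for instance, an irrational translate of the middle-thirds Cantor set—and then applying the $\C$-to-$\R$ reduction above.

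The main obstacle is this last step, since $\R \setminus \Q$ is not locally compact and compactness of a candidate Cantor subset cannot be imported directly from ambient $\R$ but must be captured intrinsically in the monadic language. I would characterize such a $K \subseteq \R \setminus \Q$ as nonempty, bounded, dense-in-itself, nowhere dense and order-complete—where order-completeness means every bounded nonempty subset of $K$ has a supremum in $K$, which is monadically expressible—and verify that order-completeness forces $K$ to be closed in ambient $\R$, hence compact, whence $K$ is homeomorphic to $\overline\C$ via \cref{subcantoriff} and order-isomorphic to $\C$ via \cref{cantorderisom}. A secondary routine check in each reduction is that the Boolean algebra $\mc B$ of quantifier ranges restricts sensibly to the substructure; this is automatic for Borel sets and for the stably Baire classes of \cref{adequateall}, since the defining class $\mc A_X$ is stable under intersection with $G_\delta$-subsets.
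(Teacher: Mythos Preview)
Your route coincides with the paper's: interpret $\R$ in $\C$ via \cref{realsincantor}, $\R\setminus\Q$ in $\R$ via \cref{irratuniq}, and close the cycle by isolating a Cantor set inside $\R\setminus\Q$ through an intrinsic order-theoretic description. The paper phrases that last step as ``compactness \dots\ is equivalent to being a complete total order with endpoints,'' which is the same idea as your order-completeness criterion.

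There is, however, a real gap at exactly the point you flag. The assertion that order-completeness (together with bounded, dense-in-itself, nowhere dense) forces $K\subseteq\R\setminus\Q$ to be closed in ambient $\R$ is false. Build a Cantor set $L\subseteq\R$ whose intersection with $\Q$ is precisely the set of left jump endpoints of $L$: at each stage of the usual construction choose the new left endpoint rational and the new right endpoint irrational, and diagonalize so that no other rationals survive in $L$. Put $K=L\setminus\Q$. Then $K\subseteq\R\setminus\Q$ is nonempty, bounded, nowhere dense, dense-in-itself, closed in $\R\setminus\Q$, and Dedekind-complete with both endpoints: for any nonempty $S\subseteq K$ one has $\sup_\R S\in L$, and if this is a rational left jump endpoint then its irrational right partner serves as $\sup_K S$. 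Yet $K$ has \emph{no} jumps at all, so $K$ is order-isomorphic to $[0,1]$, not to $\overline\C$; relativizing to such a $K$ would evaluate a sentence about $\C$ on a copy of $\R$ instead. (The paper's terse formulation has the same defect.) The cure is cheap: add the definable requirement that jumps be dense in $K$. For subsets of $\R$ this, together with your other conditions, does pin down the order type of $\overline\C$, since quotienting by jumps then yields a copy of $[0,1]$ with a countable dense set of split points, and any two such splittings are conjugate by an order-automorphism.
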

\begin{proof}
  To interpret $\R\setminus\Q$ in $\R$, use \cref{irratuniq}. To interpret $\overline\C$ in $\R\setminus\Q$, use \cref{subcantoriff} and that compactness of a subset of $\R\setminus\Q$ is definable as it is equivalent to being a complete total order with endpoints, and a total order is complete if and only if every convex subset is an interval. To interpret $\R$ in $\C$, use \cref{realsincantor}.
\end{proof}\par
We next provide an alternative proof of definability of the class of meager sets that works in every sufficiently stable Boolean algebra whose members have the Baire property. This provides an easier example of our methods and motivates adding meagerness to the language in \cref{secexpandlang}.\par
\begin{lemma}\label{lemdefmeagersmall}
  Let $X$ be a Polish space without isolated points and let $A\subseteq X$ be meager. There is a countable dense $B\subseteq X$ such that there is no Cantor set $C\subseteq A\cup B$ with $B$ dense in $C$.
\end{lemma}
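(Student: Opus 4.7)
The plan is to choose $B$ to be a countable dense subset of $X\setminus A$ and then show, via Baire category applied inside a hypothetical witnessing Cantor set, that no such $C$ can exist.

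First I would replace $A$ by a meager $F_\sigma$ superset, writing $A=\bigcup_{n\in\N}A_n$ with each $A_n$ closed and nowhere dense in $X$; this only strengthens the conclusion. By the Baire category theorem, $X\setminus A$ contains a dense $G_\delta$, so it is dense in the separable Polish space $X$, and therefore contains a countable dense subset $B$. This $B$ is automatically dense in $X$ and, crucially, disjoint from $A$.

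For the verification, suppose for contradiction that some Cantor set $C\subseteq A\cup B$ has $B$ dense in $C$. Since $C$ is homeomorphic to $\C$, it is Polish and perfect, so the countable subset $C\cap B$ is meager in $C$. From the decomposition
\[C=(C\cap B)\cup\bigcup_{n\in\N}(C\cap A_n)\]
and the Baire property of $C$, some $C\cap A_n$ must be non-meager in $C$. Being closed in $C$, it then has nonempty interior in $C$, so there exists a nonempty open $V\subseteq C$ with $V\subseteq A_n\subseteq A$. Density of $B$ in $C$ forces $V\cap B\neq\emptyset$, contradicting $B\cap A=\emptyset$.

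The argument is quite direct and I do not anticipate a real obstacle; the only point requiring a moment's thought is the initial reduction to the $F_\sigma$ case, which lets Baire category be applied to the countable collection $\{A_n\}$ rather than to the bare meager set $A$, and the observation that density of $B$ in $C$ passes to density on the relatively open subset $V$.
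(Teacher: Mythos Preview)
Your proof is correct and follows essentially the same approach as the paper's: enlarge $A$ to a meager $F_\sigma$-set $\bigcup_n A_n$, take $B$ countable dense in the complement, and use Baire's theorem in $C$ to find some $A_n$ with nonempty relative interior, which $B$ cannot meet. The only difference is cosmetic---you phrase the last step as a contradiction with the assumed density of $B$, whereas the paper directly concludes that $B$ is not dense in any such $C$.
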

\begin{proof}
  Let $A'\supseteq A$ and $A_i$ be closed nowhere dense with $A'=\bigcup_iA_i$. Then there is a countable dense $B\subseteq X\setminus A'$. Suppose that $C\subseteq A\cup B$ is a Cantor set. Then some $A_i$ is nonmeager in $C$ and thus has nonempty interior in $C$. Thus, $B$ is not dense in $C$ as $A_i\cap B\subseteq A'\cap B=\emptyset$.
\end{proof}
\begin{lemma}\label{lemdefmeagerlarge}
  Let $A\subseteq\R$ be comeager. For all dense $B\subseteq\R$ there is a Cantor set $C\subseteq A\cup B$ with $B$ dense in $C$.
\end{lemma}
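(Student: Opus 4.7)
The plan is to construct a classical Cantor scheme of closed intervals in $\R$ whose endpoints lie in $B$ and whose deeper intervals are nested inside a given decreasing sequence of open dense sets approximating $A$. The resulting compact set (homeomorphic to $\overline\C$), with its two extremal points removed, will be the required Cantor set $C$.

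Concretely, I would first use comeagerness of $A$ to fix a decreasing sequence $U_1\supseteq U_2\supseteq\cdots$ of open dense subsets of $\R$ with $G:=\bigcap_n U_n\subseteq A$, and pick a countable dense subset $B'\subseteq B$, which exists because $\R$ is second countable and $B$ is dense. Then I would recursively associate to each $\sigma\in 2^{<\N}$ a closed interval $I_\sigma=[a_\sigma,c_\sigma]$ with distinct endpoints $a_\sigma,c_\sigma\in B'$, arranged so that $I_{\sigma 0}$ and $I_{\sigma 1}$ are disjoint closed subintervals of $\ointer{a_\sigma,c_\sigma}\cap U_{|\sigma|+1}$ of diameter less than $1/(|\sigma|+1)$; each step is possible because $U_{|\sigma|+1}$ is open dense and $B'$ is dense. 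Setting $\overline C:=\bigcap_n\bigcup_{|\sigma|=n}I_\sigma$ gives a compact nonempty subset of $\R$, without isolated points (each $I_\sigma$ branches into two nonempty subpieces) and with empty interior (since diameters shrink to zero), hence homeomorphic to $\overline\C$ by \cref{subcantoriff}. Removing its two extremal points yields $C:=\overline C\setminus\{a_\emptyset,c_\emptyset\}$, which is homeomorphic to $\C$ and so a Cantor set in this paper's sense.

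Finally, I would verify both required properties. Any $x\in\overline C$ is either some $a_\sigma$ or $c_\sigma$ (so $x\in B'\subseteq B$), or else the unique point of an infinite branch $\bigcap_n I_{\sigma_n}$, in which case $x\in I_{\sigma_n}\subseteq U_n$ for every $n\ge 1$, giving $x\in G\subseteq A$; thus $\overline C\subseteq A\cup B$ and a fortiori $C\subseteq A\cup B$. For density, note that whenever $|\sigma|\ge 1$ the interval $I_\sigma$ is contained in $\ointer{a_\emptyset,c_\emptyset}$, so $I_\sigma\cap C=I_\sigma\cap\overline C$; since diameters shrink, any nonempty relatively open $V\subseteq C$ contains some such $I_\sigma\cap C$ and hence the point $a_\sigma\in B$. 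I expect the main obstacle to be forcing all non-endpoint points of $\overline C$ into $A$ even though $A$ is only comeager, not necessarily $G_\delta$; this is precisely what the nesting $I_{\sigma}\subseteq U_{|\sigma|}$ accomplishes, by pushing every infinite-branch limit point into the dense $G_\delta$ set $G\subseteq A$. A minor additional wrinkle is the paper's convention that a Cantor set is homeomorphic to $\C$ rather than $\overline\C$, handled by discarding only the two level-zero endpoints $a_\emptyset,c_\emptyset$, which affects neither containment nor density.
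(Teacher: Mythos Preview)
Your density argument has a genuine gap. Because you place $I_{\sigma 0}$ and $I_{\sigma 1}$ inside the \emph{open} interval $\ointer{a_\sigma,c_\sigma}$, the endpoints $a_\sigma,c_\sigma$ are excluded from $\bigcup_{|\tau|=|\sigma|+1}I_\tau$ and hence from $\overline C=\bigcap_n\bigcup_{|\sigma|=n}I_\sigma$. Your dichotomy ``$x$ is some $a_\sigma$ or $c_\sigma$, or else a branch point'' is thus vacuous in its first alternative: every $x\in\overline C$ is a branch point, and in fact $\overline C\subseteq G\subseteq A$. Consequently the claim that $I_\sigma\cap C$ contains $a_\sigma$ is false, and $B\cap C$ can be empty --- take $B=\Q$ and $A$ any comeager set of irrationals. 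So nothing in the construction forces $B$ to be dense in $C$.

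The scheme can be repaired by retaining endpoints, e.g.\ setting $a_{\sigma 0}=a_\sigma$ throughout and imposing the nesting $I_{\sigma 1}\subseteq U_{|\sigma|+1}$ only on the right child; then branches containing infinitely many $1$'s land in $G$, while eventually-$0$ branches land on some $a_\sigma\in B'$, and these $a_\sigma$ are now genuinely in $\overline C$ and dense there. The paper avoids this bookkeeping by invoking \cref{findcantornoends} directly: after reducing to $A$ a dense $G_\delta$ and $B=\{b_n\}_n$ countable, it applies that corollary with $\R\setminus(A\cup B)$ as the meager $F_\sigma$ set to be avoided and the singletons $\{b_n\}$ as the $B_n$, which immediately yields $C\subseteq A\cup B$ together with the condition that every nonempty relatively open subset of $C$ contains some $b_n$.
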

\begin{proof}
  We may assume that $A$ is $G_\delta$ and $B=\setb{b_n}n$ is countable. Then \cref{findcantornoends} with $\R\setminus(A\cup B)$ as $A$ and $\set{b_n}$ as $B_n$ yields $C$.
\end{proof}
\begin{proposition}
  Let $A\subseteq\R$ have the Baire property and $\mc B\subseteq\pows\R$ contain all countable sets. Then the following are equivalent:
  \begin{tfae}
  \item $A$ is meager.
  \item For all nonempty open $U\subseteq\R$ there is a dense subset $B\in\mc B$ of $U$ such that there is no Cantor set $C\subseteq A\cup B$ with $B$ dense in $C$.
  \end{tfae}
\end{proposition}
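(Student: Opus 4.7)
Both directions are essentially direct consequences of \cref{lemdefmeagersmall,lemdefmeagerlarge}, with the Baire property entering only in the reverse direction.

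For the forward direction, assume $A$ is meager. I would apply \cref{lemdefmeagersmall} to $X=\R$ (a Polish space without isolated points) and to $A$, obtaining a countable dense $B\subseteq\R$, which lies in $\mc B$ by hypothesis. Since $B$ is dense in $\R$, it is dense in every nonempty open $U\subseteq\R$, and the conclusion of the lemma---that no Cantor set $C\subseteq A\cup B$ has $B$ dense in $C$---is precisely the condition required by the proposition. Hence this single $B$ witnesses the right-hand side simultaneously for all $U$.

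For the reverse direction, I argue by contrapositive: assume $A$ is not meager. Writing $A=V\triangle M$ with $V$ open and $M$ meager (by the Baire property), $V$ is nonempty, so I pick a nonempty open interval $I\subseteq V$; then $A\cap I=I\triangle(M\cap I)$ is comeager in $I$. Set $U=I$ and let $B\in\mc B$ be any dense subset of $I$. I transport \cref{lemdefmeagerlarge} along the order-preserving homeomorphism $I\cong\R$, with comeager set $A\cap I$ and dense set $B$, to obtain a Cantor set $C\subseteq(A\cap I)\cup B\subseteq A\cup B$ with $B$ dense in $C$. Hence the right-hand side of the biconditional fails at this $U$.

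The only non-trivial detail is verifying that \cref{lemdefmeagerlarge}, stated for $\R$, transfers to the open interval $I$. This is essentially cosmetic, since $I$ is order-isomorphic and homeomorphic to $\R$, and the underlying \cref{findcantornoends} is purely topological and carries over verbatim with $\R$ replaced by $I$.
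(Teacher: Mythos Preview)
Your proof is correct and follows essentially the same approach as the paper, which for the forward direction applies \cref{lemdefmeagersmall} to each $U$ separately (taking $X=U$ and $A\cap U$) rather than once to $X=\R$. Your streamlining works, though strictly the witness for a given $U$ should be the countable set $B\cap U\in\mc B$ rather than $B$ itself.
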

\begin{proof}
  If $A$ is meager, use \cref{lemdefmeagersmall} with $U$ for $X$ and $A\cap U$ for $A$. Else, by the \cref{bairealt} choose an open interval $I$ with $A$ comeager in $I$ and apply \cref{lemdefmeagerlarge} to $I\cong\R$ with $A\cap I$ for $A$.
\end{proof}

\section{Shelah's decidability technique}\label{secrepshe}
This section repeats the results of \cite{shelahord} concerning decidability for dense total orders. They are rephrased in our language, straightforwardly extended to restricted quantifiers, and specialized to complete orders for the sake of simplicity. For the first part, other references are \cite{monadicsurvey,modchaini}. We start with notation.\par
Let $A,B$ be sets together with fixed injections to $\N$. We call a map $A\to B$ \textbf{computable} if there is a computable map $\N\to\N$ making the diagram
\[\begin{tikzcd}
A\ar{r}\ar{d}&B\ar{d}\\
\N\ar[dashed]{r}{\exists}&\N
\end{tikzcd}\]
commute. We will use this notation for example if $A\subseteq\N^{<\omega}$ or if $A$ is a (not necessarily computable) set of formulas in a finite language, without explicitly mentioning the injection to $\N$.\par
A family of functions $(f_i)_i$ is \textbf{computable uniformly in} $i$ if the map $(i,x)\mapsto f_i(x)$ is computable. Analogously, a family of sets is \textbf{computable uniformly in} $i$ if their indicator functions are so, and a sequence of theories is \textbf{decidable uniformly in} $i$ if the set of valid formulas is computable uniformly in $i$.
\begin{definition}\label{welldefcomp}
  Let $X$ be a class and $f\colon X\to A,g\colon X\to B$ be maps, where $A,B$ are sets together with fixed injections to $\N$. (Below, $X$ is a class of structures, or families of structures, $A,B$ are sets of formulas, and $f,g$ map a structure to its partial theory.)\par
  Let $f(x)\mapsto g(x)$ denote the relation with graph $\setb{(f(x),g(x))}{x\in X}$. We say that $f(x)\mapsto g(x)$ is \textbf{well-defined} if this relation is a partial function. Similarly, the family $f_i(x)\mapsto g_i(x)$ is \textbf{well-defined} if for all $i$ the relation $f_i(x)\mapsto g_i(x)$ is a partial function. We usually specify $X$ by saying that the map is \textbf{well-defined in} $x$ where the variable $x$ is known to range over $X$.
\end{definition}
When proving that such functions are well-defined and computable, we usually describe a computable extension $A\to B$ of $f(x)\mapsto g(x)$ informally by describing how to compute $g(x)$ only depending on $f(x)$. In our settings, the image of $f$ will not generally be a computable subset of $A$.

\subsection{Finite partial types}
We first describe Shelah's constructions in a first-order setting before specializing to the monadic setting.\par
We call a first-order theory $T$ in a finite language \textbf{essentially relational} if there is a computable map $f$ from atomic formulas to quantifier-free formulas in the same free variables such that $T\models\phi\Leftrightarrow f(\phi)$ for all $\phi$ and for each fixed finite set $V$ of variables the image of formulas with free variables in $V$ under $f$ is finite. For instance, theories in relational languages are essentially relational via $f=\id$. The theory of Boolean algebras in the language $\set{{\le},\vee,\wedge,\lnot,\bot,\top}$ is essentially relational via $f$ replacing all terms by their disjunctive normal forms.
\begin{definition}
  Let $T$ be a first-order theory and let $f$ witness that $T$ is essentially relational. Let $A\models T$, $a\in A^m$ and $k\in\N^n$.
  \begin{parts}
  \item We define $\fTh^-_k(m)$ by recursion on $n$:
    \begin{itemize}
    \item For $n=0$, $\fTh^-_{()}(m)$ is the image under $f$ of the atomic formulas with $m$ free variables.
    \item If $n>0$, then $\fTh^-_k(m)$ is the power set of $\fTh^-_{k^-}(m+k_n)$.
    \end{itemize}
  \item We define $\Th^1_k(A,a)\subseteq\fTh^-_k(m)$, the $k$-\textbf{theory} of $(A,a)$, by recursion on $n$:
    \begin{itemize}
    \item For $n=0$, set $\Th^1_{()}(A,a)=\setb{\phi\in\fTh^-_{()}(m)}{(A,a)\models\phi}$.
    \item For $n>0$, set $\Th^1_k(A,a)=\set{\Th^1_{k^-}(A,ab)\mid b\in A^{k_n}}$.
    \end{itemize}
    We abbreviate $\Th^1_k(A)=\Th^1_k(A,())$.
  \item Let $\fTh^1_k(m)=\fTh^-_{(k,0)}(m)$ be the set of \textbf{formally possible $k$-theories}.
  \end{parts}
\end{definition}
This definition deviates from the preliminary definition of $\Th$ in \cref{secsketch} for technical convenience. We first observe how the two definitions are equivalent. To this end, define by induction on $k\in\N^n$ formulas of \textbf{quantifier type} $k$. A formula has quantifier type $()$ if it is atomic. For $n>0$, a formula has quantifier type $k$ if it is of the form $\exists x_1\dots\exists x_{k_n}\phi$ with $\phi$ a Boolean combination of formulas of quantifier type $k^-$. By induction, $\Th^1_k(A,a)=\Th^1_k(B,b)$ if and only if $(A,a)$ and $(B,b)$ satisfy the same formulas of quantifier type $k$.\par
Thus, if we call a partial type in $T$ $k$-complete whenever it only consists of formulas of quantifier type $k$ and is maximal among such types, then elements of $\Th^1_k(A)$ correspond to $k^-$-complete types in $k_n$ free variables. Since the Boolean algebras generated by formulas with fixed quantifier type are finite, all filters are principal. Types, that is, filters of formulas, are thus determined by single formulas. Hence, we will use the terms ``type'' and ``theory'' interchangeably for these $\Th^1_k$.\par
Furthermore, every $\Th^1_k(A)\in\fTh^1_k(0)$ is hereditarily finite and $(k,m)\mapsto\fTh^1_k(m)$ is computable. Computability of $k\mapsto\Th^1_k(A)$ is equivalent to decidability of the first-order theory of $A$.
\begin{definition}
  Let $X$ be a class, let $x\in X$, let $F$ be a map from $X$ to a class of $\msc L_F$-structures and $G$ be a map from $X$ to a class of $\msc L_G$-structures. We say that the maps are $\Th^1_{O(k)}(F(x))\mapsto\Th^1_k(G(x))$ are \textbf{well-defined in }$x$\textbf{ and computable} if there is a computable $\ell\colon\N^{<\omega}\to\N^{<\omega}$ such that $\abs{\ell(k)}\le\abs k$ for all $k$ and the maps $\Th^1_{\ell(k)}(F(x))\mapsto\Th^1_k(G(x))$ are well-defined in $x$ and computable uniformly in $k$.
\end{definition}
\begin{example}\label{exampledlo}
  The well-known completeness, decidability and quantifier elimination of the theory of dense total orders without endpoints can be rephrased in this setting, simultaneously computing types. Let $A$ be a dense total order without endpoints and $a\in A^n$.\par
  We first describe the quantifier-free types. Namely, there is a computable bijection $\Phi$ from the set of possible $\Th^1_{()}(A,a)$ onto the set of surjections $\set{1,\dots,n}\to\set{1,\dots,m}$ for some $m$. Here, $\Phi(\Th^1_{()}(A,a))$ is the unique $f$ such that $a_i\mapsto f(i)$ is a well-defined order isomorphism $\set{a_1,\dots,a_n}\cong\set{1,\dots,m}$.\par
  Quantifier elimination states that $\Th^1_{()}(A,a)\mapsto\Th^1_k(A,a)$ is well-defined in $A,a$. By recursion, it suffices to consider $\Th^1_{()}(A,a)\mapsto\Th^1_{(k_1)}(A,a)$. We describe the image $\Phi(\Th^1_{(k_1)}(A,a))=\setb{\Phi(\Th^1_{()}(A,ab))}{b\in A^{k_1}}$ in terms of $\Phi(\Th^1_{()}(A,a))$. Namely, $f\colon\set{1,\dots,n+k_1}\to\set{1,\dots,m}$ equals $\Phi(\Th^1_{()}(A,ab))$ for some $b$ if and only if $f|_{\set{1,\dots,n}}=\Phi(\Th^1_{()}(A,a))$. Necessity is clear and sufficiency follows since $A$ is dense without endpoints.\par
  This also describes types: Each type is determined by $\Th^1_{()}(A,a)$ and thus corresponds to a surjection $\set{1,\dots,n}\to\set{1,\dots,m}$. In particular, since there is a unique type of the empty tuple, the theory is complete. Finally, decidability follows from computability of $\Th^1_{()}(A,a)\mapsto\Th^1_k(A,a)$.
\end{example}
This formalism also applies to restricted monadic theories in the language of order (or any finite relational language) by our previous reduction to first-order theories in \cref{monadicviafo}. In other words, we work in expansions of the language $({\le},{\subseteq},\cup,\cap,\setminus,X,\isAtom)$ of Boolean algebras with a predicate for atoms and an order $\le$ on the atoms. Instead of $\Th^1_k$ as for first-order theories, we denote partial monadic theories by $\Th_k$.\par
If $Y$ is a substructure of a restricted monadic structure $X$ and $P=(P_1,\dots,P_n)$ is a tuple of subsets of $X$, then we abuse notation by writing
\[\Th_k(Y,P)=\Th_k(Y,(Y\cap P_1,\dots,Y\cap P_n)).\]
In this case, if $(X,P)$ and $k$ are clear from the context, we often use ``the type of $Y$'' to denote $\Th_k(Y,P)$.\par
Although equivalent, it is often more convenient to work with labelings instead of subsets.
\begin{definition}
  Let $(X,{\le})$ be a total order, regarded as restricted monadic structure with quantification over $\mc B$. A $\mc B$-\textbf{labeling} of $X$ is a map $P\colon X\to I$ to a finite set $I$ such that all fibers lie in $\mc B$. We also write $P_i$ for the fiber $P^{-1}(\set i)$ and $\Th_k(X,P)$ for $\Th_k(X,(P_i)_{i\in I})$.
\end{definition}
Alternatively, a labeling could be encoded by adding a new sort and the map itself, leading to the same expressive power but not eliminating some quantifiers. Given $\Th_{()}(X,(P_i)_{i\in I})$ for a tuple $(P_i)_i$ of subsets, it is computable whether the $P_i$ assemble into a labeling $X\to I$. Namely, they do if and only if they are pairwise disjoint with $\bigcup_iP_i=X$. For now, we assume that the only expansions of the language are by tuples of subsets or labelings.

\subsection{Monadic sums}
Next, we show how to compute the monadic theory of a lexicographic sum given the theories of its summands and the index set in an appropriate sense. Let $(I,\le)$ be a total order and for $i\in I$ let $(X_i,\le)$ be total orders. We regard them as restricted monadic structures with quantification over $\mc B_i\le\pows{X_i}$ and $I$ as monadic structure with unrestricted quantification.\par
Let $\sum_{i\in I}X_i$ be the total order with underlying set the disjoint union $\coprod_iX_i$ and the lexicographic ordering. We interpret monadic constant symbols $P$ by the union of their interpretations $P_i$ in the summands. In other words, we define the sum of orders with a distinguished subset by
\[\sum_i(X_i,P_i)=\pare{\sum_iX_i,\bigcup_iP_i}.\]
Finally, $\sum_iX_i$ is regarded as a monadic structure with quantification restricted to elements of
\[\mc B_{\sum_iX_i}=\set{\bigcup_iA_i\mid A_i\in\mc B_i}.\]
If $I$ is countable, all $X_i\subseteq\R$ are $G_\delta$ and $\mc C$ is a sufficiently stable family of Boolean algebras, then, since $\set{\bigcup_iA_i\mid A_i\in\mc C_{X_i}}=\mc C_{\sum_iX_i}$, our definitions coincide.
\begin{theorem}\cite[Theorem 2.4]{shelahord}\label{monadicsum}
  Let $I$ be a total order and $(X_i,P_i)$ for $i\in I$ be total orders with tuples of $m$ subsets. Consider the labeling
  \[T^{kP}\colon I\to\fTh_k(m),i\mapsto\Th_k(X_i,P_i).\]
  Then the maps $\Th_{O(k)}(I,T^{kP})\mapsto\Th_k(\sum_i(X_i,P_i))$ are well-defined in $I,X_i,P_i$ and computable.
\end{theorem}
In particular, as the theory of the summands determines the theory of the sum uniquely, the lexicographic sum induces well-defined operations $\sum_{i\in I}\colon(\fTh_k(m))^I\to\fTh_k(m)$ with
\[\sum_{i\in I}\Th_k(X_i,P_i)=\Th_k\pare{\sum_{i\in I}(X_i,P_i)}.\]
\begin{proof}
  Write $(X,P)=\sum_i(X_i,P_i)$. Following our conventions in \cref{welldefcomp}, we only need to effectively describe $\Th_k(X,P)$ in terms of $\Th_\ell(I,T^{kP})$ for some $\ell$ with $\abs{\ell}=k$ by recursion on $\abs k$. Recall that $(T^{kP})_t$ denotes the fiber $(T^{kP})^{-1}(\set t)$. Thus, $i\in(T^{kP})_t$ if and only if $\Th_k(X_i,P_i)=t$.\par
  For $\abs k=0$, let $s$ and $s'$ be terms in the monadic language of order. We distinguish cases for atomic formulas.\par
  First, $X\models s\subseteq s'$ if and only if all $t\in\fTh_k(m)$ with $(s\nsubseteq s')\in t$ satisfy $I\models(T^{kP})_t=\emptyset$. Indeed, after unraveling definitions this states that $X\models s\subseteq s'$ if and only if $\setb i{X_i\models s\nsubseteq s'}=\emptyset$. The argument for $s=s'$ is analogous.\par
  Next, $X\models\isAtom(s)$ if and only if there is a unique $t\in\fTh_k(m)$ such that
  \begin{itemize}
  \item $\isAtom(s)\in t$;
  \item $I\models\isAtom((T^{kP})_t)$;
  \item all $t'\in\fTh_k(m)$ with $t'\ne t$ satisfy $(s=\emptyset)\in t'$ or $I\models(T^{kP})_{t'}=\emptyset$.
  \end{itemize}\par
  We finish the base case by describing when $X\models s\le s'$. We require that $X\models\isAtom(s)$ and $X\models\isAtom(s')$, say witnessed by $t,t'\in\fTh_k(m)$ with $\isAtom(s)\in t$ and $\isAtom(s')\in t'$ and $I\models\isAtom((T^{kP})_t)$ and $I\models\isAtom((T^{kP})_{t'})$. Then, we demand that
  \begin{itemize}
  \item $I\models(T^{kP})_t<(T^{kP})_{t'}$; or
  \item both $t=t'$ and $(s\le s')\in t$.
  \end{itemize}\par
  Finally, suppose that $\abs k>0$. Given $\Th_{O(k)}(I,T^{kP})$, we need to compute the set of possible $\Th_{k^-}(X,PP')$ as $P'$ varies over tuples of subsets of $X$. Since $\Th_{k^-}(X,PP')=\sum_i\Th_{k^-}(X_i,PP')$, by recursion it suffices to compute the set of possible $\Th_{O(k^-)}(I,T^{k^{-},PP'})$. We thus describe for a labeling $f$ of $I$ in terms of $\Th_{O(k^-)}(I,T^{kP},f)$ whether $f\in\setb{T^{k^-,PP'}}{P'}$. Such a $P'$ corresponds to tuples $P_i'$ of subsets of $X_i$, whose theories comprise $T^{kP}(i)=\Th_k(X_i,P_i)$. Thus,
  \[f\in\setb{T^{k^-,PP'}}{P'}\text{ if and only if }f(i)\in T^{kP}(i)\text{ for all }i\text{ if and only if }f_t\subseteq\bigcup_{t'\ni t}(T^{kP})_{t'}\text{ for all }t.\]
  This can be computed even from $\Th_{()}(I,T^{kP},f)$.
\end{proof}
\begin{corollary}\label{binsumdecid}
  Let $(X,P),(Y,Q)$ be labeled total orders. The maps
  \[(\Th_k(X,P),\Th_k(Y,Q))\mapsto\Th_k(X+Y,P+Q)\]
  are well-defined in $X,Y,P,Q$ and computable uniformly in $k$.
\end{corollary}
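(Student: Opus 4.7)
The plan is to instantiate \cref{monadicsum} with the two-element index set $I=\{1,2\}$ (ordered $1<2$) and summands $X_1=X$, $X_2=Y$, so that $X+Y=\sum_{i\in I}X_i$. Under this instantiation, the coloring $T^{kP}\colon I\to\fTh_k(\#P)$ is exactly the function $1\mapsto\Th_k(X,P)$, $2\mapsto\Th_k(Y,P)$, so $T^{kP}$ is already the given input data, packaged as a coloring of $I$.

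\Cref{monadicsum} then supplies, for some computable $\ell$ with $\abs{\ell(k)}\le\abs k$, a well-defined computable map $\Th_{\ell(k)}(I,T^{kP})\mapsto\Th_k(X+Y,P)$ uniformly in $k$. So the task reduces to showing that $\Th_{\ell(k)}(I,T^{kP})$ is itself computable from $T^{kP}$ uniformly in $k$ (and in $\ell(k)$); composing the two gives the desired map.

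For this reduction, the key observation is that $I$ has only two elements, so $\pows I$ is a fixed four-element set and every monadic quantifier in $\Th_\ell(I,T^{kP})$ ranges over this finite set. One therefore computes $\Th_\ell(I,T^{kP})$ from $T^{kP}$ by unfolding the recursive definition of $\Th$: at each level, enumerate the finitely many tuples of subsets of $I$, read off their atomic type (which is determined by $T^{kP}$ together with the inclusion/ordering data among subsets of $I$), and collect the lower-level theories inductively. This brute-force procedure is uniform in $\ell$, hence in $k$.

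There is no real obstacle; the substance has already been done in \cref{monadicsum}, and the only additional point is that the auxiliary monadic theory of the finite index set $I$ is computable by enumeration. One small thing to double-check is that the hypotheses of \cref{monadicsum} apply as stated when $I$ is regarded as a monadic structure with unrestricted quantification, but this is automatic since $\pows I$ is finite and therefore equals any Boolean algebra containing all singletons of $I$.
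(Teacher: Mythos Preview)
Your proposal is correct and follows exactly the paper's approach: instantiate \cref{monadicsum} with the two-element index set $I=\{1,2\}$, and observe that the required $\Th_{O(k)}(I,T^{kP})$ is computable from $T^{kP}=(\Th_k(X,P),\Th_k(Y,P))$ because $I$ is finite. The paper's proof is a single sentence to this effect; you have simply spelled out the brute-force enumeration over $\pows I$ more explicitly.
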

\begin{proof}
  Compose $(\Th_k(X,P),\Th_k(Y,Q))\mapsto\Th_{O(k)}(\set{1,2},T^{kP})$ with the map of \cref{monadicsum}. Since $T^{kP}(1)=\Th_k(X,P)$ and $T^{kP}(2)=\Th_k(Y,P)$ are given and $\set{1,2}$ is finite, the type $\Th_{O(k)}(\set{1,2},T^{kP})$ can be computed.
\end{proof}
To illustrate these methods, we reprove decidability of S1S. We only work with unrestricted monadic theories.
\begin{lemma}\label{finorddecid}
  \begin{parts}
  \item The monadic theory of $\set{1,\dots,n}$ is decidable uniformly in $n$.
  \item The common monadic theory of the class of finite total orders is decidable.
  \end{parts}
\end{lemma}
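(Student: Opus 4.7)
The plan is to decompose every finite total order as a lexicographic sum of singletons and then apply the monadic-sum formalism, specifically its binary instance \cref{binsumdecid}.

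For part (a), the first step is to observe that the singleton order $\{1\}$ has a computable $k$-theory $t_1\in\fTh_k(0)$ for every $k$: the underlying set has a single atom, so every subset quantifier ranges over at most two sets, and one recursively enumerates $t_1$ by brute force from the definition of $\Th_k$. Next, \cref{binsumdecid} gives a computable function $\fTh_k(0)\times\fTh_k(0)\to\fTh_k(0)$, call it $+$, with $\Th_k(X+Y)=\Th_k(X)+\Th_k(Y)$. Since $\{1,\dots,n\}$ is the lexicographic sum of $n$ singletons, $\Th_k(\{1,\dots,n\})$ is obtained by iterating $t\mapsto t+t_1$ exactly $n-1$ times starting from $t_1$. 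The whole procedure is computable uniformly in $n$ and $k$. To decide whether a monadic sentence $\phi$ holds in $\{1,\dots,n\}$, read off its quantifier type $k$, compute $\Th_k(\{1,\dots,n\})$, and check membership of $\phi$.

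For part (b), the key additional observation is finiteness of $\fTh_k(0)$. The sequence $\Th_k(\{1,\dots,n\})$ for $n\ge1$ is the orbit of $t_1$ under the function $t\mapsto t+t_1$ on the finite set $\fTh_k(0)$, hence eventually periodic. Iterating $t\mapsto t+t_1$ until the first repetition (which happens within at most $|\fTh_k(0)|+1$ steps), one computes the finite set $S_k=\{\Th_k(\{1,\dots,n\})\mid n\ge 1\}$. A sentence $\phi$ of quantifier type $k$ holds in some finite total order if and only if $\phi$ belongs to some element of $S_k$, which is decidable.

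No serious obstacle arises: both parts reduce directly to \cref{binsumdecid} once one notes that singletons have trivially computable monadic $k$-theories, with (b) additionally using the pigeonhole argument on the finite type space $\fTh_k(0)$ to bound the iteration.
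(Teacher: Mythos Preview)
Your proof is correct and follows essentially the same approach as the paper: iterate \cref{binsumdecid} from the singleton for part~(a), and use pigeonhole on the finite set $\fTh_k(0)$ for part~(b). The only minor omission is the empty order (the case $n=0$), which the paper includes explicitly but is trivially handled.
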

\begin{proof}
  The theories of $\emptyset,\set1$ are decidable by finiteness. The theory of $\set{1,\dots,n}$ can be computed uniformly by iterating the algorithm of \cref{binsumdecid}.\par
  For (b), let $k$ be a tuple of subsets. By the pigeonhole principle, $\Th_k(\set{1,\dots,p})=\Th_k(\set{1,\dots,p+q})$ for some $p,q\in\N$ with $q>0$. It suffices to show that each finite total order $I$ has the same $\Th_k$ as some $\set{1,\dots,n}$ with $n<p+q$. Write $\#I=p+mq+r$ with $r<q$. Then $n=p+r$ works since we can prove by induction on $m$ that
  \[\Th_k(I)=(\Th_k(\set{1,\dots,p})+m\Th_k(\set{1,\dots,q}))+\Th_k(\set{1,\dots,r})=\Th_k(\set{1,\dots,p})+\Th_k(\set{1,\dots,r}).\qedhere\]
\end{proof}
\begin{theorem}[S1S]\cite[Theorem 3.4]{shelahord}
  The monadic theory of $(\N,\le)$ is decidable.
\end{theorem}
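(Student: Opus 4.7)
The plan is to compute $\Th_k(\N)$ by induction on $\abs k$, combining \cref{monadicsum} with Ramsey's theorem. The base case $\abs k=0$ is the atomic theory of $(\N,\le)$, trivially decidable. For the inductive step I would strengthen the statement and show that $\Th_k(\N,c)$ is computable uniformly over constant colorings $c$ of $\N$, so that $c=\emptyset$ recovers S1S.

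Fixing a constant coloring $c$, by definition $\Th_k(\N,c)$ is the set of $\Th_{k^-}(\N,c,Q)$ as $Q$ ranges over $k_n$-tuples of subsets. For any such $Q$, I would apply \cref{ramsey} to the coloring $(i,j)\mapsto\Th_{k^-}(\rointer{i,j},c,Q)$ of pairs in $\N$, obtaining an infinite homogeneous set $A=\set{a_0<a_1<\dots}$ with common color $\tau$, which necessarily satisfies $\tau+\tau=\tau$ since $\rointer{a_0,a_2}=\rointer{a_0,a_1}+\rointer{a_1,a_2}$. Writing $\N=\rointer{0,a_0}+\sum_{n\in\N}\rointer{a_n,a_{n+1}}$, \cref{binsumdecid} splits $\Th_{k^-}(\N,c,Q)$ into the type of the finite initial segment (computable via \cref{finorddecid}) and $\Th_{k^-}(\sum_n\rointer{a_n,a_{n+1}},c,Q)$. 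Since the summands of this tail all share type $\tau$, \cref{monadicsum} determines it from $\Th_{\ell(k^-)}(\N,c_\tau)$, where $c_\tau$ denotes the constant coloring of value $\tau$ and $\abs{\ell(k^-)}\le\abs{k^-}=\abs k-1$. This last type is supplied by the inductive hypothesis.

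For the converse direction---which I need in order to enumerate $\Th_k(\N,c)$ as a finite, computable set rather than merely show each realized type is computable---any pair of a finite-colored-order type $\sigma$ and an idempotent finite-colored-order type $\tau$ (both respecting the constant value of $c$) is realized by taking $Q$ to correspond to the concatenation of a $\sigma$-realization with infinitely many copies of a $\tau$-realization. Hence $\Th_k(\N,c)$ is exactly the set of sums $\sigma+\rho(\tau)$ over admissible pairs, where $\rho(\tau)$ denotes the computed type of the $\omega$-sum of copies of $\tau$.

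The main obstacle to watch is an apparent circularity: the naive application of \cref{monadicsum} to the singleton decomposition $\N=\sum_n\set n$ reduces $\Th_k(\N,c)$ to $\Th_{\ell(k)}(\N,T^{kc})$ with only $\abs{\ell(k)}\le\abs k$, which alone does not decrease complexity. Ramsey breaks this by collapsing the index-set coloring to a constant (hence informationally trivial since its fibers are $\N$ or $\emptyset$), and crucially the reduction must be applied one quantifier level down at $k^-$, so that the inductive hypothesis at strictly smaller length $\abs{\ell(k^-)}\le\abs k-1$ becomes available.
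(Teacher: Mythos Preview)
Your proposal is correct and follows essentially the same line as the paper's proof: recursion on $\abs k$, Ramsey's theorem applied to the coloring $(i,j)\mapsto\Th_{k^-}(\rointer{i,j},Q)$ to obtain a homogeneous tail, and then \cref{monadicsum} with a constant index coloring to invoke the inductive hypothesis at level $k^-$. The paper phrases the result as $\Th_k(\N)=\setb{s+\sum_{i\in\N}t}{s,t}$ with $s,t$ ranging over \emph{all} $k^-$-types of nonempty finite colored orders, without your idempotence restriction on $\tau$ or the strengthening to constant colorings $c$; both additions are harmless but unnecessary (any $t$ is realizable in the converse direction, and a constant coloring of $\N$ adds no information beyond $\Th_{O(k^-)}(\N)$ itself).
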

\begin{proof}
  We compute $\Th_k(\N)$ by recursion on $\abs k$, and $\abs k=0$ is trivial. Suppose that $\abs k=n>0$.\par
  It suffices to show that $\Th_k(\N)=\setb{\Th_{k-}(\N,P)}{P\in(\pows\N)^{k_n}}$ is the set of $s+\sum_{i\in\N}t$ for $s,t$ ranging over all $k^-$-types of nonempty finite total orders with a tuple of $k_n$ subsets. Indeed, the set of all possible $s,t$ can be computed by \cref{finorddecid}, and the map $(s,t)\mapsto s+\sum_it$ can be computed by \cref{monadicsum}. Indeed, the computation of $t\mapsto\sum_it$ requires $\Th_{O(k^-)}(\N,T^{k^-,P})$ for the constant map $P\colon i\mapsto t$. It is computable from $\Th_{O(k^-)}(\N)$ and thus by recursion.\par
  Clearly, all $s+\sum_it$ are possible types of tuples of subsets of $\N$. Conversely, let $P\in(\pows\N)^{k_n}$ be arbitrary. Color pairs of $i<j$ by $t_{ij}=\Th_{k^-}(\set{i,\dots,j-1},P)$. By \cref{ramsey}, there are $t$ and an infinite set of $n_0<n_1<\dots$ such that $t_{n_i,n_{i+1}}=t$ for all $i$. Then $\Th_{k^-}(\N,P)=t_{0,n_0}+\sum_it$.
\end{proof}
\begin{corollary}
  The monadic theories of the order dual $\dual{\N}$ and of $\Z$ are decidable.
\end{corollary}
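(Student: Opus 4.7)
The plan is to reduce both theories to the already-established decidability of the monadic theory of $\N$ (S1S) via two elementary moves: an automorphism-like translation for the order dual, and a binary lexicographic sum decomposition for $\Z$.

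First, I would handle $\dual{\N}$ by noting that there is a trivial syntactic transformation on monadic sentences that swaps $\le$ with its converse: replace every atomic formula $s\le s'$ by $s'\le s$ (leaving $\subseteq$, Boolean operations, and $\isAtom$ untouched). Under our first-order reduction from \cref{monadicviafo}, a monadic sentence $\phi$ holds in $\dual{\N}$ if and only if its dual $\phi^*$ holds in $\N$. Since the translation $\phi\mapsto\phi^*$ is computable and S1S is decidable, decidability of $\Th(\dual{\N})$ follows.

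Second, I would observe the order isomorphism $\Z\cong\dual{\N}+\N$ obtained by splitting $\Z$ at $0$: the negative integers $\{\dots,-2,-1\}$ with their natural order form a copy of $\dual{\N}$, and $\{0,1,2,\dots\}$ is $\N$. By the first part, $\Th_k(\dual{\N})$ is computable uniformly in $k$, and by S1S so is $\Th_k(\N)$. Applying \cref{binsumdecid} to the pair $(\dual{\N},\N)$ then yields a computable procedure for $\Th_k(\Z)=\Th_k(\dual{\N}+\N)$ uniformly in $k$, which is decidability of the monadic theory of $\Z$.

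There is no real obstacle here — the statement is a packaging of S1S together with the two most basic closure properties of the framework (duality and finite lexicographic sums). The only point meriting a word of care is that the duality argument relies on the language being the one fixed in \cref{secrepshe} (order, Boolean operations, $\isAtom$); if one adds further nonsymmetric predicates to the monadic language, a dual computation of the new predicates in $\dual{\N}$ would have to be supplied, but for the theory of order as stated this is immediate.
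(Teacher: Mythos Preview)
Your proposal is correct and is exactly the intended argument; the paper states this corollary without proof, as it follows immediately from S1S by the duality $\phi\mapsto\phi^*$ for $\dual{\N}$ and the decomposition $\Z\cong\dual{\N}+\N$ together with \cref{binsumdecid}.
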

\begin{corollary}
  Let $X$ be a total order. The maps $\Th_k(X)\mapsto\Th_k(\sum_{i\in\N}X)$ and $\Th_k(X)\mapsto\Th_k(\sum_{i\in\Z}X)$ are well-defined in $X$ and computable.
\end{corollary}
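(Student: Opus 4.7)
My plan is to apply \cref{monadicsum} with index set $I = \N$ (respectively $\Z$) and constant summands $X_i = X$, combining it with the just-established decidability of the monadic theories of $\N$ and $\Z$.

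For a $k_n$-tuple $P$ of subsets of $\sum_{i\in I} X$, write $P_i$ for its restriction to the summand $X_i$. The coloring $T^{k^-,P}\colon I \to \fTh_{k^-}(k_n)$ from \cref{monadicsum} sends $i$ to $\Th_{k^-}(X, P_i)$, which by definition of $\Th_k(X)$ takes values in the finite set $\Th_k(X) \subseteq \fTh_{k^-}(k_n)$. Conversely, any coloring $T\colon I \to \Th_k(X)$ is realized as $T^{k^-,P}$: in each summand $X_i = X$ pick a $k_n$-tuple of $k^-$-theory $T(i)$ and assemble these across $i$ into a tuple $P$. So as $P$ ranges over $k_n$-tuples of subsets of $\sum_i X$, the coloring $T^{k^-,P}$ ranges precisely over colorings of $I$ with values in $\Th_k(X)$.

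By \cref{monadicsum} applied at level $k^-$, the theory $\Th_{k^-}(\sum_i X, P)$ is a computable function of $\Th_\ell(I, T^{k^-,P})$ for some $\ell$ with $\abs{\ell} \leq \abs{k^-}$. Using the decidability of the monadic theory of $I$, I compute the set of possible $\Th_\ell(I, T)$ where $T$ runs over $\Th_k(X)$-valued colorings of $I$: encode $T$ as a $\#\Th_k(X)$-tuple of subsets of $I$, enumerate all $\ell$-theories of such tuples via decidability of the monadic theory of $I$, and retain only those in which the tuple is pairwise disjoint with union $I$, which is a quantifier-free condition already encoded in $\Th_{()}(I, T)$. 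Feeding each surviving theory through the computable map of \cref{monadicsum} yields the corresponding $\Th_{k^-}(\sum_i X, P)$, and the collection of these is exactly $\Th_k(\sum_i X)$. I do not foresee any serious obstacle: the corollary is bookkeeping on top of \cref{monadicsum} and the preceding S1S-style decidability results, the only mild subtlety being the filtering of tuples of subsets of $I$ down to genuine $\Th_k(X)$-valued colorings.
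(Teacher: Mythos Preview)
Your proposal is correct and follows the same route the paper intends: combine \cref{monadicsum} with the just-established decidability of the monadic theory of the index set $I\in\{\N,\Z\}$. The paper states the corollary without proof, but the implied argument is the one-line version of yours: apply \cref{monadicsum} directly at level $k$ with the empty parameter tuple, so that $T^{k,()}\colon I\to\fTh_k(0)$ is the \emph{constant} coloring $i\mapsto\Th_k(X)$; then $\Th_{O(k)}(I,T^{k,()})$ is computable from $\Th_k(X)$ together with the decidable monadic theory of $I$, and \cref{monadicsum} outputs $\Th_k(\sum_iX)$. You instead unfold one level, applying \cref{monadicsum} at level $k^-$ and letting the coloring of $I$ range over all $\Th_k(X)$-valued maps; this is equally valid and your filtering step (retaining only tuples that encode genuine colorings) is sound since the partition condition is visible already in $\Th_{()}(I,T)$. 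The constant-coloring version is shorter, but there is no gap in what you wrote.
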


\subsection{Special quantifiers}
Using monadic quantifiers, quantifiers over elements or labelings can be expressed. We now define analogues of $\Th_k$ for such quantifier types.
\subsubsection{First-order quantifiers}\label{secfoquant}
\begin{definition}
  Let $\Phi$ be an arbitrary map whose domain are structures $X$ with $\mc B\subseteq\pows X$ and $P\in\mc B^k$. Let $m\in\N$. Set $\asub_m(\Phi)(X,P)=\setb{\Phi(X,PP')}{P'\in\mc B^m}$.
\end{definition}
Informally, $\asub$ describes the standard restricted monadic quantifier. More precisely, $\Th_k=\asub_{k_n}(\Th_{k^-})$, hence inductively $\Th_k=\asub_{k_n}(\asub_{k_{n-1}}(\dots(\asub_{k_1}(\Th_{()})\dots)))$.\par
Next, we want to combine first-order theories with monadic theories. If $\mc B$ contains all singletons $\set x$, define $\elem(\Phi)(X,P)=\set{\Phi(X,P\set x)\mid x\in X}$. For $m\in\N,k\in\N^n$ define the mixed theory
\[\Th_{m;k}=\asub_{k_n}(\asub_{k_{n-1}}(\dots(\asub_{k_1}(\underbrace{\elem(\dots(\elem}_{m\text{ times}}(\Th_{()})\dots)))))).\]
Our previous $\Th_k$ is $\Th_{0;k}$. We obtain an analogue of \cref{monadicsum}.
\begin{theorem}\label{monadicsumfo}
  Let $I$ be a total order and $(X_i,P_i)$ for $i\in I$ be total orders with tuples of subsets. Let $T^{m;k,P}\colon i\mapsto\Th_{m;k}(X_i,P_i)$. The maps $\Th_{m;O(k)}(I,T^{m;k,P})\mapsto\Th_{m;k}(\sum_i(X_i,P_i))$ are well-defined in $I,X_i,P_i$ and computable.
\end{theorem}
\begin{proof}
  We again work by recursion on $(m,k)$ and the base case $(0,())$ is unchanged. For $\abs k>0$, the recursion step from $(m,O(k^-))$ to $(m,k)$ is exactly the same as the previous recursion step from $O(k^-)$ to $k$. Thus it remains to handle the step from $(m,())$ to $(m+1,())$.\par
  Write $(X,P)=\sum_i(X_i,P_i)$. Given $\Th_{m+1;()}(I,T^{m;k,P})$, we need to compute the set of possible $\Th_{m;()}(X,P\set x)$ as $x$ varies over $X$. Again, since $\Th_{m;()}(X,P\set x)=\sum_i\Th_{m;()}(X_i,P\set x)$, by recursion it suffices to compute the set of possible $\Th_{m;()}(I,T^{m;(),P\set x})$. We thus describe for a labeling $f$ of $I$ in terms of $\Th_{m+1;()}(I,T^{m;k,P},f)$ whether $f\in\setb{T^{m;(),P\set x}}x$. There is $x\in X$ with $f=T^{m;(),P\set x}$ if and only if there is $i\in I$ such that $f(i)\in T^{m+1;(),P}(i)$ (equivalently, there is $x\in X_i$ with $f(i)\in T^{m;(),P\set x}(i)$) and for $j\ne i$ we have that $f(j)=T^{m;(),P\emptyset}(j)$. This can be described using $\elem$ for the index set.
\end{proof}
In the following, it is convenient to work in a finite partial Morleyization of the language of order. This simplifies the treatment of low quantifier ranks, for example in \cref{compareshelah}. More precisely, for $m\in\N$ let $\Phi_m$ be the set of monadic formulas such that
\begin{itemize}
\item the formulas contain only first-order quantifiers,
\item the formulas are in prenex normal form,
\item existential and universal quantifiers alternate,
\item there are exactly $m$ quantifiers.
\end{itemize}
For instance, $\Th_{m;()}(X,P)=\Th_{m;()}(Y,Q)$ if and only if $(X,P)$ and $(Y,Q)$ satisfy the same formulas in $\Phi_m$. A formula is equivalent to an element of $\bigcup_m\Phi_m$ if and only if it only contains first-order quantifiers. From now on, we work in the monadic language obtained by adding predicates for all elements of some fixed $\Phi_m$ with $m$ defined below. Then $\Th_k$ in this new language is $\Th_{m;k}$ in the old language. Thus, \cref{monadicsum} and its corollaries continue to hold.\par
Let us specify $m$. It is desirable to express any first-order definable order-theoretic or topological property by a formula without monadic quantifiers (for example, being closed, being dense, having isolated points, being bounded on either side, being an open or closed interval, or having an immediate successor or predecessor). We will use this only for one property at a time, hence only for finitely many properties. Fix $m$ large enough such that $\Phi_m$ expresses all these finitely many properties used in the remaining text.
\subsubsection{Labelings}
In our setting, labelings are more convenient to work with than subsets, but quantifying over them has the same expressive power. Thus, we adapt $\Th_k$ to work with $\mc B$-labelings.\par
For a structure $X$ with a $\mc B$-labeling $P\colon X\to I$ and $f\colon J\to I$, we define the possible types of refinements,
\[\refin_f(\Phi)(X,P)=\setb{\Phi(X,PP')}{P'\text{ a }\mc B\text{-labeling with }f\circ P'=P}.\]
For a sequence $k$ of surjections $I_n\xto{k_n}I_{n-1}\xto{k_{n-1}}\dots\xto{k_2}I_1\xto{k_1}I_0$ of finite sets, let
\[\pTh_k=\refin_{k_n}(\refin_{k_{n-1}}(\dots(\refin_{k_1}(\Th_{()})\dots))).\]\par
For such a sequence $k$ of surjections we use the same notation as for tuples of natural numbers. For example, we write $\abs k$ for its length and $k^-$ for the sequence without $k_n$.
\begin{definition}\label{ordquantifiers}
  We define a preorder on the set of sequences of surjections by $k\le k'$ if $\abs k\le\abs{k'}$ and there is a commutative diagram
  \[\begin{tikzcd}
  &&I_{\abs k}\ar{r}{k_{\abs k}}\ar[dashed]{d}{\exists}&I_{\abs k-1}\ar{r}{k_{\abs k-1}}\ar[dashed]{d}{\exists}&\cdots\ar{r}{k_1}&I_0\ar[dashed]{d}{\exists}\\
  I'_{\abs{k'}}\ar{r}{k'_{\abs{k'}}}&\cdots\ar{r}{k'_{\abs k+1}}&I'_{\abs k}\ar{r}{k'_{\abs k}}&I'_{\abs k-1}\ar{r}{k'_{\abs k-1}}&\cdots\ar{r}{k'_1}&I'_0
  \end{tikzcd}\]
  with the vertical maps injective.
\end{definition}
This ordering is analogous to the componentwise ordering on tuples of natural numbers. In particular, if $k\le k'$, then $\pTh_k(X,P)$ can be computed from $\pTh_{k'}(X,P)$ uniformly in $k,k'$. Again, we write $O(k)$ for a large enough uniformly computable sequence of the same length as $k$.
\begin{proposition}
  The maps $\Th_{O(k)}(X,P)\mapsto\pTh_k(X,P)$ and $\pTh_{O(k)}(X,P)\mapsto\Th_k(X,P)$ are well-defined in $X,P$ and computable.
\end{proposition}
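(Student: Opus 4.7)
The plan is to prove both directions by induction on $\abs k$, using the observation that the operations $\asub_m$ and $\refin_f$ are mutually inter-simulable modulo quantifier-free constraints that are detectable from $\Th_{()}$. The base case $\abs k=0$ is trivial since both theories reduce to $\Th_{()}$.

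For $\Th_{O(k)}(X,P)\mapsto\pTh_k(X,P)$, I would first unpack the definitions. For $k$ a sequence of surjections $I_n\xto{k_n}\dots\xto{k_1}I_0$, a refinement $P'\colon X\to I_i$ of the current coloring $P\colon X\to I_{i-1}$ along $k_i$ is exactly the data of a tuple of $\abs{I_i}$ subsets $(P'_j)_{j\in I_i}$ satisfying the refinement condition: the $P'_j$ are pairwise disjoint, their union is $X$, and each $P'_j\subseteq P_{k_i(j)}$. This condition is a Boolean combination of atomic formulas in the variables $P, P'$, hence is detectable from $\Th_{()}(X,P,P')$. Setting $\ell(k)=(\abs{I_1},\dots,\abs{I_n})\in\N^n$, at each level $\refin_{k_i}(\Phi)(X,P)$ is computed from $\asub_{\abs{I_i}}(\Phi)(X,P)$ by discarding those tuples in the latter set whose quantifier-free type fails the refinement condition and then relabeling the remaining tuples as refinements. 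Iterating gives the desired map, with $\abs{\ell(k)}=\abs k$.

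For $\pTh_{O(k)}(X,P)\mapsto\Th_k(X,P)$, the simulation is direct. Given $k\in\N^n$ and the codomain $I_0$ of $P$, define $I_i=I_{i-1}\times\set{0,1}^{k_i}$ and let $k_i'\colon I_i\to I_{i-1}$ be the projection. A tuple $(Y_1,\dots,Y_{k_i})$ of subsets of $X$ together with the current coloring $P_{i-1}\colon X\to I_{i-1}$ uniquely determines a refinement $P_i\colon X\to I_i$ along $k_i'$ by sending $x$ to $(P_{i-1}(x),\chi_{Y_1}(x),\dots,\chi_{Y_{k_i}}(x))$, and every refinement of $P_{i-1}$ along $k_i'$ arises this way. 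Hence $\asub_{k_i}(\Phi)(X,P_{i-1})$ is computable from $\refin_{k_i'}(\Phi)(X,P_{i-1})$ by reading off the appropriate coordinate unions of fibers. The sequence $\ell(k)=(k_1',\dots,k_n')$ has length $n=\abs k$, as required.

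The main obstacle I expect is purely bookkeeping: tracking which coloring has which index set at each recursion step and confirming that the refinement condition really is capturable by $\Th_{()}$, namely that predicates such as $P'_j\subseteq P_{k_i(j)}$ and $\bigcup_jP'_j=\top$ are among the normal forms of atomic formulas recorded by the essentially relational structure. Once these are spelled out, both simulations are routine and yield the length bound $\abs{\ell(k)}=\abs k$, which is stronger than what $O(k)$ demands.
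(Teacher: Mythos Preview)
Your proposal is correct and follows essentially the same approach as the paper. Your index set $I_{i-1}\times\set{0,1}^{k_i}$ is precisely the paper's $I\times\pows{\set{1,\dots,m}}$ under the obvious bijection, and your filtering of $\asub_{\abs{I_i}}$ by the quantifier-free refinement condition is exactly what the paper means by ``checking if $X_1,\dots,X_m$ form the fibers of a refinement by a quantifier-free monadic formula''; the paper's proof is simply a terser statement of the same two simulations.
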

The proof generalizes the observations that a subset of $X$ can be coded by its characteristic function, which is a labeling, while a labeling can be coded by its fibers, which are subsets.
\begin{proof}
  By recursion it suffices to show for $P\colon X\to I$ that:
  \begin{enumerate}
  \item Given $f\colon J\to I$ we can compute $m\in\N$ such that from $\asub_m(\Phi)(X,P)$ we can compute $\refin_f(\Phi)(X,P)$ uniformly in $f$.
  \item Given $m\in\N$ we can compute $f\colon J\to I$ such that from $\refin_f(\Phi)(X,P)$ we can compute $\asub_m(\Phi)(X,P)$ uniformly in $I,m$.
  \end{enumerate}
  For (a), check from $\Phi(X,PP')$ if $P'_1,\dots,P'_m$ form the fibers of a refinement of $P$ by a quantifier-free monadic formula. For (b), let $J=I\times\pows{\set{1,\dots,m}}$ and $f\colon J\to I$ be the projection. Then the type after adding subsets $P'_1,\dots,P'_m$ can be detected from the labeling $X\to J,x\mapsto(P(x),\setb k{x\in P'_k})$.
\end{proof}
For the singleton order $\set\star$, for all $k,k'$ the data of $\Th_{()}(\set\star,P),\Th_k(\set\star,P),\pTh_{k'}(\set\star,P)$ are readily computable from each other (they only state the label $P(\star)$). We do not distinguish them in the following, but just write $\Th(\set\star,P)$ for any of them. Similarly, we denote any $\Th_k(\emptyset,P)$ or $\pTh_{k'}(\emptyset,P)$ by $\Th(\emptyset)$.

\subsubsection{Labeling Cantor sets}
From now on, we fix a sufficiently stable Boolean algebra $\mc B$.
\begin{definition}
  Let $I,J$ be finite sets, $X\cong\C$ and $Y\cong\R$.
  \begin{parts}
  \item A labeling $P\colon X\to I\amalg J$ is \textbf{jump-normalized} if it maps left endpoints of jumps into $J$ and all other points into $I$.
  \item Let $C\subseteq Y$ be a Cantor set and $P\colon Y\to I$ be a $\mc B$-labeling. Let $J=\fTh_{O(k)}(\#I)$ and $\exten_Y(P)\colon C\to I\amalg J$ be the jump-normalized labeling that satisfies $\exten_Y(P)(x)=\pTh_k(\rointer{x,y},P)$ for all jumps $(x,y)$ and coincides with $P$ on all other points.
  \end{parts}
\end{definition}
The property of being jump-normalized is quantifier-free definable as being a jump is quantifier-free definable. Our next aim is to reconstruct $\pTh_k(Y,P)$ from the refinement $\exten_Y(P)$ of $\pTh_k(C,P)$.
\begin{proposition}\label{reconexten}
  Let $Y\cong\R$ and $P\colon Y\to I$ be a $\mc B$-labeling and $C\subseteq Y$ be a Cantor set. The maps
  \[\recon\colon\pTh_{O(k)}(C,\exten_Y(P))\mapsto\pTh_k(Y,P)\]
  are well-defined in $P$ and computable.
\end{proposition}
Since $(Y,P)$ is the lexicographic sum $\sum_{x\in C}(I_x,P)$ where $I_x=\rointer{x,y}$ for all jumps $(x,y)$ and $I_z=\set z$ for all other points $z$, for \cref{reconexten} it suffices to show the following lemma.
\begin{lemma}\label{monadicsumcantor}
  Let $X,Y\subseteq\R$ be locally closed and $I_x$ for $x\in X$ be total orders. Suppose that $\setb{I_x}{x\in X}/\cong$ is finite. Let $P$ be a $\mc B$-labeling of $Y$ with $(Y,P)\cong\sum_{x\in X}(I_x,P)$. Set $T^{kP}\colon x\mapsto\pTh_k(I_x,P)$. Then the maps $\pTh_{O(k)}(X,T^{kP})\mapsto\pTh_k(Y,P)$ are well-defined in $X,I_x,P$ and computable.
\end{lemma}
\begin{proof}
  The only difference to \cref{monadicsum} is that the index set $X$ does not admit unrestricted quantification but only quantification over $\mc B$. We show that the same algorithm remains correct.\par
  The base case is unchanged. For the induction step, we use the same notation as in the previous proof. Now $P'$ varies over tuples in $\mc B$ and $f$ varies over $\mc B$-labelings. Thus, for correctness of the previous algorithm it suffices to show that, for a refinement $P'$ of $P$ such that all $P'|_{I_x}$ are $\mc B$-labelings,
  \begin{enumerate}[label=(\roman*)]
  \item if $P'$ is a $\mc B$-labeling, so is $T^{k^-,P'}$;
  \item if $T^{k^-,P'}$ is a $\mc B$-labeling, then there is a $\mc B$-refinement $Q'$ of $P$ with $T^{k^-,P'}=T^{k^-,Q'}$.
  \end{enumerate}\par
  Each $I_x$ is convex in $Y$, thus locally closed in $\R$. By convexity $\#I_x\le1$ for all but countably many $x$. Write $C=\setb x{\#I_x=1}$ and $D=\sum_{x\in C}I_x\subseteq Y$.\par
  For (i), it suffices to show that the restriction of $T^{k^-,P'}$ to $\setb x{\#I_x\le1}\in\mc B$ is a $\mc B$-labeling. Since $Y$ is locally closed, so is $\setb x{I_x\ne\emptyset}$. (Indeed, working locally, suppose that $Y$ is closed. Since $Y$ is Dedekind complete, $\setb x{I_x\ne\emptyset}$ is Dedekind complete, thus locally closed.) Since $(C,T^{k^-,P'})\cong(D,P')$ (identifying labels $\Th(\set x,P')$ with $P'(x)$), also $T^{k^-,P'}|_C$ is a $\mc B$-labeling.\par
  For (ii), for each type $s$ of some $I_x$ and isomorphism class $t$ of $I_x$ as unlabeled order with $\#I_x>1$, choose a realization $Q'_{st}$ of $s$ and $t$. Define $Q'$ by $Q'|_D=P'|_D$ and $Q'|_{I_x}=Q'_{\pTh_{k^-}(I_x,P'),I_x}$. Then $T^{k^-,P'}=T^{k^-,Q'}$ by construction and it remains to show $Q'$ a $\mc B$-labeling.\par
  Since $(D,Q')=(D,P')\cong(C,T^{k^-,P'})$, the restriction $Q'|_D$ is a $\mc B$-labeling. To show that $Q'|_{Y\setminus D}$ is a $\mc B$-labeling, let $\mc A$ be the generator of $\mc B$ in \cref{adequateall}. First, since there are finitely many $Q'_{st}$, there is a finite bound on the number of elements of $\mc A$ used in some $Q'_{st}$. Second, by the defining property of $\mc A$ for the partition into $D$ and the $I_x$, taking an element of $\mc A_{I_x}$ for each $x$ yields an element of $\mc A_Y$. We conclude that each fiber of $Q'|_{Y\setminus D}$ is a finite Boolean combination of elements of $\mc A$.
\end{proof}
\begin{definition}
  \begin{parts}
  \item Let $X\subseteq\R$ be locally closed and $P\colon X\to I$ be a $\mc B$-labeling. We define
    \[\cant(\Phi)(X,P)=\set{\Phi(C,\exten_X(P))\mid C\subseteq X\text{ a Cantor set}}.\]
    If $t\in\cant(\Phi)(X,P)$, we often say that $t$ occurs in $(X,P)$.
  \item Let $C\cong\C$ and $P\colon C\to I$ be a $\mc B$-labeling. We obtain a labeling $\R_C(P)\colon R\to I\amalg I\times I$ with $R\cong\R$ as in \cref{cantorquot} by leaving all points of $C$ that are no component of a jump unchanged and replacing all jumps $(a,b)$ by a point of label $(P(a),P(b))$.
  \end{parts}
\end{definition}
By \cref{monadicsumcantor}, $\pTh_{O(k)}(C,P)$ and $\pTh_{O(k)}(R,\R_C(P))$ are computable from each other. Indeed, there are decompositions $C\cong\sum_{r\in R}I_r$ with $\#I_r\in\set{1,2}$ and $R\cong\sum_{c\in C}J_c$ with $\#J_c\in\set{0,1}$.

\subsection{Uniform labelings}
Our next aim is to show that over the reals, we can reduce to quantifying over Cantor sets and quantifying over sets satisfying a certain uniformity condition (\cref{shelahres}).
\begin{proposition}\cite[Lemma 5.3]{shelahord}
  Let $X\subseteq\R$ be locally closed, $P$ be a $\mc B$-labeling of $X$ and $k$ be a sequence of surjections. The following conditions are equivalent
  \begin{tfae}
  \item all nonempty relatively convex $I,J\subseteq X$ without endpoints satisfy $\pTh_k(I,P)=\pTh_k(J,P)$;
  \item there are a dense $D\subseteq X$ and types $p,q$ such that all $a,b\in D$ satisfy $\pTh_k(\ointer{a,b},P)=p$ and $\Th(\set a,P)=q$.
  \end{tfae}
\end{proposition}
\begin{proof}
  Assuming (i), let $D$ be a nonempty fiber of $P$. Since every nonempty relatively convex subset has the same type as $X$ and this type guarantees an element of $D$ in any realization, the set $D$ is dense.\par
  Suppose that (ii) holds and let $I,J$ be nonempty convex without endpoints. Since $D$ is dense, there are strictly increasing sequences $(a_n)_{n\in\Z},(b_n)_{n\in\Z}$ in $D$ with $a_{-n}\to\inf I,a_n\to\sup I,b_{-n}\to\inf J,b_n\to\sup J$ as $n\to\infty$. Thus
  \begin{multline*}
    \pTh_k(I,P)=\sum_{n\in\Z}\Th(\set{a_n},P)+\pTh_k(\ointer{a_n,a_{n+1}},P)=\sum_{n\in\Z}(q+p)\\
    =\sum_{n\in\Z}\Th(\set{b_n},P)+\pTh_k(\ointer{b_n,b_{n+1}},P)=\pTh_k(J,P).
  \end{multline*}
\end{proof}
We call $P$ $k$-\textbf{uniform} if these equivalent conditions hold.\par
A $\mc B$-labeling $P$ of $\C$ is $k$-uniform if and only if $\R_\C(P)$ is $k'$-uniform for $k'$ the image of $k$ under the endofunctor $I\mapsto I\amalg I\times I$ of finite sets. Moreover, iso-uniform $\mc B$-labelings are $k$-uniform for all $k$.
\begin{lemma}\label{somewhereunif}
  Let $P$ be a $\mc B$-labeling of $X$ with $X\cong\R$ or $X\cong\C$ and let $k$ be a sequence of surjections. Then there is an interval $I\subseteq X$ that is $k$-uniformly labeled by $P$.
\end{lemma}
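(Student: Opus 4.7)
The plan is to invoke a Ramsey-type argument on an additive coloring of pairs valued in the finite set of $\pTh_k$-types. Define
\[g(a,b) = \pTh_k(\rointer{a,b}, P)\]
for $a<b$ in $X$. By \cref{monadicsum} (applied to a two-element index set), the $\pTh_k$-type of any lexicographic sum depends only on the types of its summands, so the operation $s+t \coloneqq \pTh_k\bigl((Y_1,P)+(Y_2,P)\bigr)$ whenever $\pTh_k(Y_i,P)=s,t$ is well-defined and associative on the relevant finite set of types. The identity $\rointer{a,c} = \rointer{a,b} + \rointer{b,c}$ of colored orders then says exactly that $g(a,c) = g(a,b) + g(b,c)$, i.e., $g$ is an additive coloring in the sense of \cref{dloramsey}.

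I would then apply \cref{dloramsey} (if $X\cong\R$) or \cref{cantorramsey} (if $X\cong\C$) to obtain a somewhere dense homogeneous subset $H\subseteq X$ with some constant color $t$, meaning $\pTh_k(\rointer{a,b},P)=t$ for all $a<b$ in $H$. Let $I\subseteq X$ be an open interval in which $H$ is dense. My claim is that $P|_I$ is $k$-uniform via condition (i) of the definition.

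To verify (i), let $J\subseteq I$ be any nonempty convex set without endpoints. Since $J$ has no max or min, $J$ is open in the order topology; density of $H$ in $I$ then gives density of $H\cap J$ in $J$, and the absence of endpoints makes $H\cap J$ both cofinal and coinitial in $J$. Choose a strictly monotone $\Z$-indexed sequence $(a_n)_{n\in\Z}$ in $H\cap J$ that is cofinal above and coinitial below, giving the lexicographic decomposition
\[J = \bigsqcup_{n\in\Z}\rointer{a_n, a_{n+1}}.\]
By homogeneity each summand has $\pTh_k(\rointer{a_n,a_{n+1}},P)=t$, so \cref{monadicsum} with index set $\Z$ computes $\pTh_k(J,P)$ from $t$ and the (fixed) monadic theory of $\Z$ alone. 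Thus $\pTh_k(J,P)$ does not depend on $J$, establishing condition (i).

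The main subtlety to check is the setup of $g$ as a genuine additive coloring into a \emph{fixed} finite semigroup: one must verify that the sum of two $\pTh_k$-types is itself a well-defined $\pTh_k$-type of some colored order, which is an immediate corollary of the well-definedness in \cref{monadicsum}. Once this is granted, the Ramsey step and the $\Z$-indexed decomposition are both routine.
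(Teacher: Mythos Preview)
Your proof is correct and takes essentially the same approach as the paper: apply \cref{dloramsey} or \cref{cantorramsey} to an additive coloring by $\pTh_k$-types and verify uniformity on an interval where the homogeneous set is dense. The paper uses the pair-valued coloring $(a,b)\mapsto\bigl(\pTh_k(\ointer{a,b},P),\Th(\set a,P)\bigr)$ and reads off condition~(ii) of the definition of $k$-uniformity directly, whereas your half-open-interval coloring absorbs the point type and you then reproduce the (ii)$\Rightarrow$(i) argument via the $\Z$-indexed decomposition.
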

\begin{proof}
  By \cref{dloramsey} or \cref{cantorramsey} there are an interval $I$ and a dense $D\subseteq I$ that is homogeneous for the additive coloring $(a,b)\mapsto\pare{\pTh_k(\ointer{a,b},P),\Th(\set a,P)}$.
\end{proof}
\begin{remark}\label{extenunif}
  If $P$ is $k$-uniform and $C\subseteq\R$ is a Cantor set, then given $\pTh_{O(k)}(C,P)$, or equivalently $\pTh_{O(k)}(\R,\R_C(P))$, together with $\pTh_k(\R,P)$, we can compute $\pTh_k(C,\exten_\R(P))$. Indeed, every complementary interval has type $\pTh_k(\R,P)$.
\end{remark}
Let $\Phi$ be an element of some $\fTh_k(\#P)$ for $P$ a $\mc B$-labeling of $X$ with $X\cong\R$ or $X\cong\C$. We define $\urefin_f^k(\Phi)(X,P)$ as the set of $\Phi(X,PP')$ for $P'$ ranging over all jump-normalized $k$-uniform $\mc B$-refinements of $P$ along $f$.\par
We define a uniform analogue of $\cant$, called $\ucant^k(\Phi)(X,P)$, as the set of $\Phi(C,P)\in\cant(\Phi)(X,P)$ for $k$-uniformly labeled $C$. Note that $\urefin_f^k(\ucant^k(\Phi))=\urefin_f^k(\cant(\Phi))$ as coarsening a $k$-uniform $\mc B$-labeling yields a $k$-uniform $\mc B$-labeling. We call this composite also $\urefincant_f^k(\Phi)$.\par
Now suppose that $P$ is $k$-uniform. We define $\uTh_k(X,P)$ by recursion on $\abs k$. Set $\uTh_{()}(X,P)=\Th_{()}(X,P)$. If $\abs k>0$, then $\uTh_k(X,P)=(\uTh_k^1(X,P),\uTh_k^2(X,P))$ with $\uTh_k^1(X,P)=\urefin_{k_n}^{k^-}(\uTh_{k^-})(X,P)$ and $\uTh_k^2(X,P)=\urefincant_{k_n}^{k^-}(\uTh_{k^-})(X,P)$.\par
\begin{remark}\label{compareshelah}
  For $\uTh_k^2$, \cite[Definition 5.2]{shelahord} used convex equivalence relations instead of Cantor sets. This also allows handling incomplete total orders, but is equivalent for the reals. We use Cantor sets for the more direct applicability of Baire category methods. He worked with $\Th$ instead of $\pTh$ (although he also used an analogue of $\pTh$ elsewhere), and the difference is irrelevant. Due to Morleyizing the language, we obtain that $\pTh_{O(k)}(\R,P)\mapsto\uTh_k(\R,P)$ is well-defined and computable, while he needed to increase $\abs k$ for this \cite[Lemma 5.1]{shelahord}.
\end{remark}
\begin{theorem}\cite[Theorem 5.4]{shelahord}\label{shelahres}
  The maps $\pTh_{O(k)}(\R,P)\mapsto\uTh_k(\R,P)$ and $\uTh_{O(k)}(\R,P)\to\pTh_k(\R,P)$ are well-defined in $k$-uniform $P$ and computable.
\end{theorem}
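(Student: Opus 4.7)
The plan is to establish both maps by simultaneous induction on $\abs k$. The base case $\abs k=0$ is immediate, since $\pTh_{()}$ and $\uTh_{()}$ both coincide with $\Th_{()}$.

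For the first direction, suppose $P$ is $k$-uniform and we wish to compute $\uTh_k(\R,P)=(\uTh_k^1(\R,P),\uTh_k^2(\R,P))$ from $\pTh_{O(k)}(\R,P)$. By definition, $\uTh_k^1$ enumerates $\uTh_{k^-}(\R,PP')$ over jump-normalized $k^-$-uniform refinements $P'$ of $P$ along $k_n$, and $\uTh_k^2$ enumerates $\uTh_{k^-}(C,\exten_\R(PP'))$ over such $P'$ together with Cantor subsets $C\subseteq\R$. The conditions ``jump-normalized refinement along $k_n$'' are quantifier-free in the Morleyized language, and ``$PP'$ is $k^-$-uniform'' is definable via its second characterization (the existence of a dense witness set on which points and consecutive intervals carry fixed types). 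After padding $k$ to a sufficient $O(k)$, the type $\pTh_{O(k)}(\R,P)$ hence enumerates both the admissible $P'$ with their $\pTh_{O(k^-)}(\R,PP')$, and the admissible pairs $(P',C)$ with $\pTh_{O(k^-)}(C,\exten_\R(PP'))$ (the latter computable via \cref{reconexten} and \cref{monadicsumcantor}). Applying the inductive hypothesis entrywise converts these into $\uTh_{k^-}$ values and assembles $\uTh_k(\R,P)$.

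For the second direction, suppose $P$ is uniform enough that $\uTh_{O(k)}(\R,P)$ is defined; the goal is to compute $\pTh_k(\R,P)=\{\pTh_{k^-}(\R,PP'):P'\text{ a refinement of }P\text{ along }k_n\}$. Fix such a refinement $P'$. Applying \cref{somewhereunif} inside each open interval of $\R$, the set $V\subseteq\R$ of points admitting some $k^-$-uniform neighborhood of $PP'$ is open and dense; its complement $C=\R\setminus V$ is closed and nowhere dense. If $C=\emptyset$ then $PP'$ is globally $k^-$-uniform, so $\pTh_{k^-}(\R,PP')$ is computable from $\uTh_{O(k^-)}(\R,PP')$ by the inductive hypothesis, and this datum appears in $\uTh_k^1(\R,P)$. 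Otherwise, use \cref{compmeagreal} to write $C$ as the disjoint union of a Cantor set and a countable remainder, and enlarge it to a Cantor set $C'\supseteq C$ all of whose complementary intervals are $k^-$-uniform for $PP'$, by further subdividing $V$-components at countably many additional points if necessary. Then \cref{reconexten} and \cref{monadicsumcantor} recover $\pTh_{k^-}(\R,PP')$ from $\pTh_{O(k^-)}(C',\exten_\R(PP'))$, which by the inductive hypothesis is determined by $\uTh_{O(k^-)}(C',\exten_\R(PP'))$, and this in turn is enumerated in $\uTh_k^2(\R,P)$.

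The main obstacle lies in the second direction: ensuring that the Cantor decomposition of an arbitrary refinement $P'$ of $P$ produces a coloring $\exten_\R(PP')$ on $C'$ that is actually realized in the $\uTh_k^2$-enumeration, i.e., arises from some jump-normalized $k^-$-uniform refinement of $P$ along $k_n$. The enlargement of $C$ to a Cantor set $C'$, absorbing the countable part of $C$ and subdividing $V$ so that each complementary interval becomes individually uniform, requires careful use of \cref{compmeagreal} together with control over the refinement data; and the padding $O(k)$ in $\uTh_{O(k)}$ must be chosen large enough both to accommodate this subdivision and to encode the resulting Cantor-set colorings.
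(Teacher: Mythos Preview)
Your first direction is fine and matches the paper.

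The second direction has a genuine gap. You take an arbitrary refinement $P'$, build a Cantor set $C'$ whose complementary intervals are $k^-$-uniform for $PP'$, and then assert that $\uTh_{O(k^-)}(C',\exten_\R(PP'))$ is ``enumerated in $\uTh_k^2(\R,P)$''. But $\uTh_k^2=\urefincant_{k_n}^{k^-}(\uTh_{k^-})$ only ranges over \emph{$k^-$-uniform} refinements $PP'$ and over Cantor sets on which the induced coloring is \emph{$k^-$-uniform}. Your $PP'$ is an arbitrary refinement, typically not uniform, and your $C'$ is (up to enlargement) exactly the locus where $PP'$ fails to be locally uniform; there is no reason whatsoever for $(C',\exten_\R(PP'))$ to be $k^-$-uniform. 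So the data you produce simply need not appear in $\uTh_k^2(\R,P)$, and the argument stops. You flag this as ``the main obstacle'' in your last paragraph, but you do not resolve it, and it cannot be resolved by padding $O(k)$ or by a more careful enlargement of $C$.

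The paper's proof avoids this by not trying to decompose a fixed $P'$ in one step. Instead it defines $E\subseteq\fTh_{k^-}(\#P)$ as the least set containing the types of all $k^-$-uniform refinements (this is $\uTh_k^1$ via the inductive hypothesis) and closed under binary concatenation with a point, under $\N$- and $\dual\N$-indexed sums, and under Cantor reconstruction from a \emph{uniform} Cantor restriction whose complementary intervals already have types in $E$ (this uses $\uTh_k^2$). One then shows $E=\pTh_k(\R,P)$. The nontrivial inclusion is argued by contradiction: the set $C$ of points every neighborhood of which contains a subinterval with type outside $E$ is closed, has no isolated points (by the binary closure), and has empty interior (by \cref{somewhereunif}). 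If $C\ne\emptyset$ it contains a copy of $\C$, and a further application of \cref{somewhereunif} yields a sub-Cantor set $D$ on which the refinement \emph{is} uniform; the Cantor closure rule then forces the type of the convex hull of $D$ into $E$, contradicting $D\subseteq C$. The key idea you are missing is this inductive closure together with the Ramsey-style shrinking on the bad set; a single-step decomposition cannot substitute for it.
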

\begin{proof}
  Computing $\refin(\cant)$ from $\refin$ is easy using \cref{subcantoriff}. Thus, for computing $\pTh_{O(k)}(\R,P)\mapsto\uTh_k(\R,P)$ it suffices to decide, given $\pTh_{O(k)}(\R,P)$, whether $P$ is $k^-$-uniform. This is easy by quantifying over intervals.\par
  For the converse computation, let $P\colon\R\to I$. By recursion it suffices to compute $\pTh_k$ from $\urefin^{k^-}_f(\pTh_{k^-})$ and $\urefincant^{k^-}_f(\pTh_{k^-})$ for a large enough (in the sense of \cref{ordquantifiers}) map $f$ onto $I$. Informally, we show that each element of $\pTh_k(\R,P)$ is obtained from uniform elements (in (i) below) by repeated lexicographic addition ((ii) and (iii) below), and by an operation (iv) that combines the given types of uniformly labeled Cantor sets and previously obtained types of complementary intervals. Formally, define $E$ as the smallest subset of $\fTh_{k^-}(\#P)$ such that the following conditions hold:
  \begin{enumerate}[label=(\roman*)]
  \item $\urefin^{k^-}_{k_n}(\pTh_{k^-})(\R,P)\subseteq E$.
  \item Let $t_1,t_2\in E$ be arbitrary and $t=\Th(\set x,Q)$ for some $x\in\R$ and refinement $Q$ of $P$. Then $t_1+t+t_2\in E$.
  \item Let $t_1\in E$ be arbitrary and $t=\Th(\set x,Q)$ for some $x\in\R$ and refinement $Q$ of $P$. Then $\sum_{i\in\N}(t_1+t)\in E$ and $\sum_{i\in\dual{\N}}(t+t_1)\in E$.
  \item Define $J=\fTh_{O(k^-)}(\#I)$ and let $g\colon I\amalg J\to I$ be the identity on $I$ and such that $g(\pTh_{O(k^-)}(\rointer{x,y},P))=P(x)$. Let $h\colon J\to J$ be such that $h(\pTh_{O(k^-)}(\rointer{x,y},P))=\pTh_{O(k^-)}(\ointer{x,y},P)$. Consider an arbitrary $\pTh_{k^-}(\C,Q)\in\urefincant_g^{k^-}(\pTh_{k^-})(\R,P)$ with $h(Q(\C)\cap J)\subseteq E$. Then $\recon(\pTh_{k^-}(\C,Q))\in E$.
  \end{enumerate}
  Since $E$ is clearly computable, it suffices to show that $E=\pTh_k(\R,P)$.\par
  For showing $E\subseteq\pTh_k(\R,P)$, we use induction on the construction. The base case (i) follows from
  \[\urefin^{k^-}_{k_n}(\pTh_{k^-})(\R,P)\subseteq\refin_{k_n}(\pTh_{k^-})(\R,P)=\pTh_k(\R,P).\]
  For (ii), suppose that $t_1,t_2\in\pTh_k(\R,P)$ and $t=\Th(\set x,Q)$ for some refinement $Q$ of $P$. The $k$-uniformity of $P$ yields $t_1\in\pTh_k(\R,P)=\pTh_k(\ointer{-\infty,x},P)$ and $t_2\in\pTh_k(\R,P)=\pTh_k(\ointer{x,\infty},P)$, say $Q_1,Q_2$ are $\mc B$-refinements of $P$ with $t_1=\pTh_{k^-}(\ointer{-\infty,x},Q_1)$ and $t_2=\pTh_{k^-}(\ointer{x,\infty},Q_2)$. Then $Q_1,Q_2,Q$ combine to $Q'$ with
  \[t_1+t+t_2=\pTh_{k^-}(\ointer{-\infty,x},Q_1)+\Th(\set x,Q)+\pTh_{k^-}(\ointer{x,\infty},Q_2)=\pTh_{k^-}(\R,Q')\in\pTh_k(\R,P).\]
  Similarly, splitting $\R$ into countably many intervals separated by points we obtain (iii), and splitting $\R$ into intervals separated by a Cantor set we obtain (iv).\par
  For the converse inclusion, let $\pTh_{k^-}(\R,Q)\in\pTh_k(\R,P)$ be arbitrary. We need the following result.
  \begin{sublemma}\label{lemshelahres}
    Let $Q$ be a $\mc B$-labeling of $\R$ and $I\subseteq\R$ be convex without endpoints such that for all $x\in I$ there are $a,b\in I$ with $a<x<b$ such that all $a',b'$ with $a<a'<b'<b$ satisfy $\pTh_{k^-}(\ointer{a',b'},Q)\in E$. Then $\pTh_{k^-}(I,Q)\in E$.
  \end{sublemma}
  \begin{proof}
    By assumption we can cover $I$ with open intervals $J$ such that for all subintervals $K\subseteq J$ without endpoints we have that $\pTh_{k^-}(K,Q)\in E$. By compactness we may assume that every closed subinterval of $I$ meets finitely many $J$. Writing $I$ as an increasing union of such closed intervals, we obtain a strictly increasing sequence $(a_n)_{n\in\Z}$ with $a_{-n}\to\inf I,a_n\to\sup I$ as $n\to\infty$ and $\pTh_{k^-}(\ointer{a_n,a_{n+1}},Q)\in E$ for all $n\in\Z$. Using (ii), we obtain $\pTh_{k^-}(\ointer{a_m,a_n},Q)\in E$ for all integers $m<n$. Apply \cref{ramsey} to the colorings $(m,n)\mapsto\pare{\pTh_{k^-}(\ointer{a_m,a_n},Q),\Th(\set{a_m},Q),\Th(\set{a_n},Q)}$ of $\N$ and $-\N$ separately. Replacing $(a_n)_{n\in\Z}$ by the subsequences thus obtained we may assume that there are $s,t,s_1,t_1$ with $\Th(\set{a_n},Q)=s$ for $n<0$, $\Th(\set{a_n},Q)=t$ for $n>0$, $\pTh_{k^-}(\ointer{a_{n-1},a_n},Q)=s_1$ for $n<0$, and $\pTh_{k^-}(\ointer{a_n,a_{n+1}},Q)=t_1$ for $n>0$. By (ii) and (iii)
  \[\pTh_{k^-}(I,Q)=\sum_{i\in\dual\N}(s+s_1)+\Th(\set{a_{-1}},Q)+\pTh_{k^-}(\ointer{a_{-1},a_1},Q)+\Th(\set{a_1},Q)+\sum_{i\in\N}(t_1+t)\in E.\qedhere\]
  \end{proof}
  Let $C\subseteq\R$ be the set of $x$ such that for all $a,b$ with $a<x<b$ there are $a',b'$ with $a<a'<b'<b$ and $\pTh_{k^-}(\ointer{a',b'},Q)\notin E$. Clearly $C$ is closed.\par
  We show that $C$ has no isolated points. Suppose for the sake of contradiction that $C\cap\ointer{a,b}=\set x$. By \cref{lemshelahres} all subintervals $I$ of $\ointer{a,x},\ointer{x,b}$ satisfy $\pTh_{k^-}(I,Q)\in E$. Since $x\in C$, there are $a',b'$ with $a<a'<b'<b$ and $\pTh_{k^-}(\ointer{a',b'},Q)\notin E$. Then $a'<x<b'$ as otherwise we may set $I=\ointer{a',b'}$, and similarly $\pTh_{k^-}(\ointer{a',x}),\pTh_{k^-}(\ointer{x,b'})\in E$. This contradicts (ii).\par
  We show that $C$ has empty interior. Let $I$ be any interval. By \cref{somewhereunif} there is an open subinterval $J\subseteq I$ such that $Q$ is $k$-uniform on $J$. Thus for $a',b'\in J$ with $a'<b'$ by (i)
  \[\pTh_{k^-}(\ointer{a',b'},Q)\in\urefin_{k_n}^{k^-}(\pTh_{k^-})(\R,P)\subseteq E.\]
  In other words, $J\cap C=\emptyset$. Hence $I\nsubseteq C$.\par
  Applying \cref{subcantoriff,cantorderisom} in $\R\cup\set{-\infty,\infty}$ we conclude that $C$ is order isomorphic to one of $\emptyset,\C,\tsingl+\C,\C+\tsingl,\overline\C$. If $C$ is empty, then \cref{lemshelahres} with $I=\R$ shows $\pTh_{k^-}(\R,Q)\in E$.\par
  Else $C$ includes a subset $D\cong\C$ that is convex in $C$. Shrinking $D$, by \cref{somewhereunif} we may assume that $Q$ is $k$-uniform on $D$. Let $I=\ointer{\inf D,\sup D}$ be the convex hull of $D$ in $\R$ and $J\subseteq I$ be any subinterval. Then by definition of $C$ and \cref{lemshelahres}, the type of each complementary interval of $D$ in $J$ lies in $E$, whence (iv) yields $\pTh_{k^-}(J,Q)\in E$. This shows $D\subseteq I\cap C=\emptyset$, which is absurd.
\end{proof}
For later use, we record a consequence of the proof.
\begin{lemma}\label{shelahresdecomp}
  Let $P$ be a $\mc B$-labeling of $\R$ and $k$ be a sequence of surjections. Then there is a realization $P'$ of $\Th_k(\R,P)$ such that $\R$ can be decomposed as a countable disjoint union of open intervals, Cantor sets and points on which $P'$ is $k$-uniform.
\end{lemma}
\begin{proof}
  Let $E$ be as in the proof of \cref{shelahres}. Since $\Th_k(\R,P)\in E$, we obtain a realization $P'$ such that $(\R,P')$ is constructed by a finite number of applications of the operations of (i) to (iv). Each of these operations preserves the property to admit such a decomposition.
\end{proof}
\begin{example}\cite[Theorem 6.5]{shelahord}
  Consider the Boolean algebra generated by the countable sets as $\mc B$. This requires minor modifications since $\mc B$ is not sufficiently stable and the proof of \cref{monadicsumcantor} fails. However, the reader may check that, using the notation of the proof, the subclass of the $T^{k^-,PP'}$ such that there is a $\mc B$-labeling $Q'$ with $T^{k^-,PP'}=T^{k^-,PQ'}$ is computable. Thus \cref{monadicsumcantor} persists.\par
  We prove decidability of the restricted monadic theory. Informally, we obtain full quantifier elimination for uniform sets. More precisely, by \cref{shelahres} it suffices to show the following. Let $X_1,\dots,X_n\in\mc B$ be the fibers of a valid $()$-uniform $\mc B$-labeling. In other words, $\R$ is the disjoint union of the $X_i$ and each $X_i$ is empty or dense. Then the labeling is $k$-uniform for all $k$ and the maps $\uTh_{()}(\R,X_1,\dots,X_n)\mapsto\uTh_k(\R,X_1,\dots,X_n)$ are well-defined in the $X_i$ and computable uniformly in $k$. By recursion, it suffices to compute $\uTh_{(k_1)}(\R,X_1,\dots,X_n)$.\par
  For $\uTh^1_{(k_1)}$ with $k_1\colon\set{1,\dots,m}\to\set{1,\dots,n}$, as in \cref{exampledlo} we show that $\uTh^1_{(k_1)}(\R,X_1,\dots,X_n)$ is the set of all $\uTh_{()}(\R,Y_1,\dots,Y_m)$ with $Y_i\subseteq X_{k_1(i)}$ for all $i$. Indeed, $\uTh_{()}(\R,X_1,\dots,X_n)$ only contains the information which of the $X_i$ is the unique cocountable one and which ones are empty. Thus the statement follows from the following two facts.
  \begin{itemize}
  \item If $X\subseteq\R$ is dense, then there are two disjoint dense subsets of $X$.
  \item If $X\subseteq\R$ is cocountable, then there are two disjoint dense subsets of $X$ one of which is countable.
  \end{itemize}
  Thus, for computing $\uTh^2_{(k_1)}$ it suffices to compute $\ucant$. Similarly to the previous case, we show that a type $t$ of a uniformly labeled Cantor set occurs in $(\R,X_1,\dots,X_n)$ whenever there are $X_i'$ with $\uTh_{()}(\R,X_1',\dots,X_n')=\uTh_{()}(\R,X_1,\dots,X_n)$ and such that $t$ occurs in $(\R,X_1',\dots,X_n')$. A partial version is the following statement.
  \begin{itemize}
  \item If $A_1,\dots,A_m,B_1,\dots,B_n\subseteq\R$ are countable, dense and pairwise disjoint, then there is a Cantor set $C$ such that all $A_j$ are disjoint from $C$ and all $B_i$ are dense in $C$.
  \end{itemize}
  The general statement also considers endpoints of jumps and follows easily from \cref{findcantor}.\par
  \cite[Theorem 6.2]{shelahord} used a variant of this argument to compute the common unrestricted monadic theory of the rationals and all Specker orders (also called Aronszajn lines).
\end{example}

\section{Nesting Cantor sets}\label{secdecproofa}
From now on, we assume that all members of the sufficiently stable Boolean algebra $\mc B$ have the Baire property. This is a strictly weaker assumption than determinacy.\par
We define a notion of coarse types, which contain a limited amount of data that nonetheless determines the type of a uniform $\mc B$-labeling. Essentially, we may forget all data about refinements, $\urefin$, and only need to remember the types of Cantor subsets, $\ucant$.\par
In this section, we define coarse types and compute which ones are satisfiable. In the coming \cref{secdecproofb} we show how to compute uniform refinements of a coarse type.

\subsection{Expanding the language}\label{secexpandlang}
To eliminate more quantifiers, we now expand the Morleyized monadic language of \cref{secfoquant} by a predicate for meager sets, thus changing the meaning of types like $\pTh_k$. The results of the previous sections, in particular \cref{shelahres}, persist in the expanded language, as argued here.\par
The same proofs apply if we obtain analogues of \cref{monadicsum} for countable index sets and \cref{monadicsumcantor}. Since the induction steps are handled as before, it suffices to consider the base case $\abs k=0$. We distinguish three cases:
\begin{enumerate}[label=(\roman*)]
\item For \cref{monadicsum}, the index set is countable.
\item For the first application of \cref{monadicsumcantor} in \cref{reconexten}, the union of all summands that are singletons is a Cantor set in $\R$, thus meager.
\item For the remaining applications of \cref{monadicsumcantor} in computing $\pTh_{O(k)}(\C,P)$ and $\pTh_{O(k)}(\R,\R_\C(P))$ from each other, all summands are finite.
\end{enumerate}
In case (i) or (ii), a subset of the sum is meager if and only if it is meager in all infinite summands. In case (iii), a subset $A$ is meager if and only if the set of summands $I_x$ with $A\cap I_x\ne\emptyset$ is meager.

\subsection{Coarse types}
\begin{definition}
  Let $I$ be a finite set of labels and $n\in\N$.
  \begin{parts}
  \item We define $\fcTh_n(I)$ by recursion on $n$. Set $\fcTh_0(I)=I\times\pows I$, and for $(i,J)\in\fcTh_0(I)$ call $i$ the \textbf{comeager label}, elements of $J$ \textbf{realized labels}, and elements of $I\setminus J$ \textbf{omitted labels}. Set $\fcTh_{n+1}(I)=\fcTh_0(I)\times\pows{\fcTh_n(I\amalg I\times I)}$, with the first component called \textbf{underlying $0$-type} and elements of the second component called \textbf{Cantor} $n$-\textbf{subtypes}.
  \item We define whether a $\mc B$-labeling $P$ of $R\cong\R$ or $C\cong\C$ by $I$ \textbf{realizes} $t\in\fcTh_n(I)$ by recursion on $n$, and whether it is \textbf{coarsely $n$-uniform}.
    \begin{itemize}
    \item A $\mc B$-labeling of $R$ realizes a coarse $0$-type $(i,J)$ if
      \begin{itemize}
      \item the fiber of the comeager label $i$ is indeed comeager;
      \item the fibers of the realized labels $j\in J$ are dense; and
      \item the fibers of the omitted labels $j'\notin J$ are empty.
      \end{itemize}
    \item Given the notion of realizing coarse $n$-types for $\mc B$-labelings of $\R$, we say that
      \begin{itemize}
      \item a $\mc B$-labeling $Q$ of $C$ with $m$ labels realizes a coarse $n$-type $t\in\fcTh_n(I\amalg I\times I)$ if $\R_C(Q)$ realizes $t$; and
      \item a $\mc B$-labeling of $R\cong\R$ or $C\cong\C$ is coarsely $n$-uniform if it realizes some $\fcTh_n(I)$.
      \end{itemize}
    \item A $\mc B$-labeling $P$ of $R$ realizes a coarse $(n+1)$-type $(t_0,T)$ if
      \begin{itemize}
      \item $P$ realizes the underlying $0$-type $t_0$;
      \item for each Cantor subtype $t\in T$, every nonempty open subset of $R$ has a Cantor subset that realizes $t$; and
      \item each coarsely $n$-uniformly labeled Cantor subset of $(R,P)$ realizes some Cantor subtype $t\in T$.
      \end{itemize}
    \end{itemize}
  \item For a coarsely $n$-uniform $\mc B$-labeling $P$ of $R\cong\R$ or $C\cong\C$, write $\cTh_n(R,P)$ (respectively $\cTh_n(C,P)$) for the unique element of $\fcTh_n(I)$ (respectively $\fcTh_n(I\amalg I\times I)$) realized by $P$, and call it the \textbf{coarse type} of $P$.
  \end{parts}
\end{definition}
As for usual types, we use ``realizes'' and ``satisfies'' synonymously for coarse types. We sometimes use the terminology for coarse types for arbitrary labelings, calling labels \textbf{realized} if their fiber is nonempty and \textbf{omitted} else, and referring to types of coarsely $n$-uniformly labeled Cantor sets of a labeled order $X$ as the \textbf{Cantor} $n$-\textbf{subtypes} of $X$.\par
\begin{example}
  We give an equivalent description of coarse uniformity and coarse $n$-types that is more directly analogous to $\uTh_k$.\par
  Given a $\mc B$-labeling $P\colon\R\to I$ and $n\in\N$, we define a preliminary $n$-theory $T_n(P)$. For $n=0$, let $T_0(P)$ be the pair of the set of labels with empty fiber and the set of labels with meager fiber. These data are encoded in $\uTh_{()}(\R,P)=\Th_{()}(\R,P)$, and for $()$-uniform $P$ they determine $\Th_{()}(\R,P)$ completely. Let $T_{n+1}(P)$ be the set of Cantor $n$-subtypes of $(\R,P)$. In other words, $T_{n+1}(P)$ is the set of $\cTh_n(\R,\R_C(P))$ as $C$ varies over $n$-uniformly labeled Cantor subsets.\par
  Then $P$ is coarsely $n$-uniform if and only if $T_n(P)=T_n(P|_J)$ and $T_0(P)=T_0(P|_J)$ for all nonempty open intervals $J$. For $n=0$, although there is no formal equality, the data encoded by $\cTh_0(\R,P)$ and $T_0(P)$ coincide for coarsely $0$-uniform $P$ since by the Baire property there is a unique comeager label. For $n>0$, we have that $\cTh_n(\R,P)=(\cTh_0(\R,P),T_n(P))$ for coarsely $n$-uniform $P$.
\end{example}
We will investigate which coarse types are satisfiable (by a $\mc B$-labeling of $\R$), leading to \cref{satisconstr}.\par
Here we can already answer the question of satisfiability for $0$-types: By a straightforward argument, $(i,J)\in\fcTh_0(I)$ is satisfiable if and only if $i\in J$. For a coarse type with labels in $I\amalg I\times I$ to be satisfiable by a $\mc B$-labeling of $\C$, we need an additional condition: The labels in $I\times I$ of the jumps must contain at least one realized label, but must not contain the comeager label. For $0$-types, it is straightforward that these conditions suffice. Also for general $n$, we call coarse $n$-types satisfiable by a $\mc B$-labeling of $\C$ \textbf{Cantor-satisfiable}. Cantor-satisfiable types are also satisfiable by a $\mc B$-labeling of $\R$.
\begin{lemma}\label{relcoarunif}
  $O(k)$-uniform $\mc B$-labelings are coarsely $\abs k$-uniform.
\end{lemma}
\begin{proof}
  Since coarse $\abs k$-uniformity means that every label or recursively Cantor subtype that occurs in some open subset occurs in every open subset, this follows from the fact that usual types $\Th_{O(k)}(\R,P)$ determine coarse types $\cTh_{\abs k}(\R,P)$.
\end{proof}
\begin{lemma}\label{trivunif}
  \begin{parts}
  \item Open intervals in coarsely $n$-uniformly labeled sets are coarsely $n$-uniformly labeled of the same type.
  \item For all $n$, every labeled set has a coarsely $n$-uniformly labeled subinterval.
  \end{parts}
\end{lemma}
\begin{proof}
  (a) is clear and (b) follows from \cref{somewhereunif,relcoarunif}.
\end{proof}
For the remaining document, we abbreviate ``coarsely uniform'' as ``uniform'' and will not use the previous notion of uniformity. Similarly, we will often abbreviate ``coarse type'' as ``type'' and reserve the unqualified use of ``type'' for coarse types.
\begin{corollary}
  For $n\ge k$, the map $\cTh_n(\R,P)\mapsto\cTh_k(\R,P)$ is well-defined in $P$ and computable uniformly in $n,k$. We denote this map also by $t\mapsto t|k$. If the Cantor subtypes of $t$ are $t_1,\dots,t_m$, then the Cantor subtypes of $t|k$ are $t_1|(k-1),\dots,t_m|(k-1)$.
\end{corollary}
\begin{proof}
  By recursion, it suffices to consider $\cTh_{n+1}(\R,P)\mapsto\cTh_n(\R,P)$. The case $n=0$ holds by definition of $\cTh_1$. For the induction step it suffices to show that if the Cantor subtypes of an $(n+1)$-type $t$ are $t_1,\dots,t_m$, then the Cantor subtypes of $t|n$ are $t_1|(n-1),\dots,t_m|(n-1)$. Clearly each $t_i|(n-1)$ is realized. The converse follows as by \cref{trivunif} each Cantor subtype of $t|n$ also has an $n$-uniform realization, which realizes some $t_i$.
\end{proof}

\subsection{Uniform sums}
We first observe that iso-uniform sums make sense for $\mc B$-labeled orders. More precisely, if $\mc B$ is a sufficiently stable Boolean algebra and $T$ is a finite set of $\mc B$-labeled order types, then the iso-uniform sum of $T$ is a $\mc B$-labeled order type by \cref{adequatepart}. The notion of uniform sum defined below describes the coarse type of an iso-uniform sum (this will follow from \cref{unifcomp}), but is more flexible.\par
Let $n\in\N$ and $T\subseteq\fcTh_n(I\amalg I\times I)$.
\begin{definition}
  A labeled total order $C$ is \textbf{compatible with} $T$ if every Cantor $n$-subtype of $C$ is realized by a Cantor subset of some realization of some $t\in T$.
\end{definition}
In particular, realizations of elements of $T$ are compatible with $T$, and singleton orders are compatible with all $T$. Moreover, subsets of sets compatible with $T$ are compatible with $T$.
\begin{proposition}\label{unioncompat}
  Let $(X,P)$ be a labeled total order and $(C_i)_{i\in\N}$ be a sequence of closed subsets of $X$ compatible with $T$. Then $\bigcup_iC_i$ is compatible with $T$.
\end{proposition}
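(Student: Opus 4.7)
The approach is a Baire category argument carried out inside $D$ itself. Fix an $n$-uniform Cantor subset $D\subseteq\bigcup_i C_i$; the goal is to embed the type of $D$ into some realization of an element of $T$.

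First I observe that $D\cong\C$ is Polish (as a $G_\delta$ subset of the compact metric space $\overline\C$) and is covered by the countable family $(D\cap C_i)_i$, each member of which is closed in $D$ because $C_i$ is closed in $X$. Baire's theorem then produces an index $j$ for which $D\cap C_j$ is somewhere dense in $D$; being closed in $D$, it has nonempty interior there.

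Next, inside this nonempty open subset of $D$ I choose a nonempty open interval $I$ of $D$. Identifying $D$ with $\C$ as ordered topological spaces via \cref{cantorderisom}, the interval $I$ is order isomorphic to $\C$, hence a Cantor set contained in $C_j$. By \cref{trivunif}~(a) the $n$-uniform coloring $P$ restricts to a coloring on $I$ of the same coarse type, i.e.\ $\cTh_n(I,P)=\cTh_n(D,P)$. Since $C_j$ is compatible with $T$, the type of $I$—equivalently the type of $D$—embeds into some realization of an element of $T$, proving that $\bigcup_i C_i$ is compatible with $T$.

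The only step that requires a little topological care is the construction of $I$: one needs to know that a nonempty open interval of $D\cong\C$ is itself homeomorphic (even order isomorphic) to $\C$, which follows from \cref{cantorderisom} together with \cref{clopencantor} applied to the corresponding clopen subset of $\overline D\cong\overline\C$. Beyond this, the argument is a direct application of Baire's theorem and \cref{trivunif}.
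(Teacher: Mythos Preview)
Your proof is correct and follows essentially the same route as the paper's: apply Baire's theorem inside $D$ to find some $C_j$ with nonempty interior in $D$, then use $n$-uniformity (\cref{trivunif}) to see that this interior contains a Cantor subset of $C_j$ realizing $\cTh_n(D,P)$, and conclude via compatibility of $C_j$. The paper's proof is merely terser, compressing your interval $I$ into the phrase ``by uniformity $C_i$ includes a realization of $\cTh_n(D,P)$''; your explicit invocation of \cref{cantorderisom} and \cref{clopencantor} just unpacks this step.
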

\begin{proof}
  Let $D\subseteq\bigcup_iC_i$ be an $n$-uniformly labeled Cantor subset. Then some $C_i$ is nonmeager in $D$, thus by closedness has nonempty interior. By uniformity $C_i$ includes a realization of $\cTh_n(D,P)$. Since $C_i$ is compatible with $T$, we conclude that $\cTh_n(D,P)$ can be embedded in some realization of some $t\in T$.
\end{proof}
\begin{definition}\label{defunifsum}
  Let $P$ be a $\mc B$-labeling of $X\cong\R$ or $X\cong\C$. We call $(X,P)$ an $n$-\textbf{uniform sum} of $T$ if
  \begin{itemize}
  \item every nonempty open subset of $X$ includes realizations of all $t\in T$ by meager Cantor sets, and
  \item there is a meager $F_\sigma$ set $M\subseteq X$ compatible with $T$ such that $X\setminus M$ has a single realized label.
  \end{itemize}
\end{definition}
The definition for $X\cong\C$ is equivalent to $(\R,\R_\C(P))$ being an $n$-uniform sum of $T$, using for (a) the bijection of meager Cantor subsets of \cref{projcantorbij}. Thus, when proving statements about uniform sums $(X,P)$ and coarse types, it usually suffices to consider the case $X=\R$.\par
Given realizations of all summands, an iso-uniform sum of these realizations is an $n$-uniform sum. Coarsening an (iso- or $n$-)uniform sum along a map of labels yields a uniform sum of the set of coarsened types.\par
By \cref{trivunif}, whenever $C$ is compatible with a set $\set{t_1,\dots,t_m}$ of $n$-types, then $C$ is also compatible with $\set{t_1|k,\dots,t_m|k}$ for $k\le n$. In particular, any $n$-uniform sum of $\set{t_1,\dots,t_m}$ is a $k$-uniform sum of $\set{t_1|k,\dots,t_m|k}$.
\begin{lemma}\label{unifsumdisj}
  Let $P$ be a $\mc B$-labeling of $\R$. Then $(\R,P)$ is an $n$-uniform sum of $T$ if and only if there are countably many Cantor subsets $C_j$ with pairwise disjoint closures such that every $C_j$ is compatible with $T$, for all $t\in T$ every nonempty open subset includes a $C_j$ realizing $t$, and $\R\setminus\bigcup_jC_j$ has a cocountable label.
\end{lemma}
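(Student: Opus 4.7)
The plan is to verify each direction separately. I start with a preliminary observation that will be used repeatedly: since every Cantor subset $D$ of $\R$ is order isomorphic to $\C$ by \cref{cantorderisom} and $\C$ has no extrema, $D$ does not contain $\inf D$ or $\sup D$. Applied to a Cantor set $K\subseteq\R$ with $\overline K=K\cup\{\inf K,\sup K\}$, this shows every Cantor subset of $\overline K$ is already a Cantor subset of $K$; in particular $\overline K$ is compatible with $T$ if and only if $K$ is.

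For the backward direction, given the family $(C_j)$ with cocountable color $c$, let $S=\{x\in\R\setminus\bigcup_jC_j:P(x)\ne c\}$, a countable set, and define $M=\bigcup_j\overline{C_j}\cup S$. Each $\overline{C_j}$ is closed with empty interior by \cref{subcantoriff}, hence nowhere dense, and $S$ is a countable union of singletons, so $M$ is meager $F_\sigma$. By the preliminary observation each $\overline{C_j}$ is compatible, while singletons are trivially compatible, so \cref{unioncompat} applied to the countable closed family $\{\overline{C_j}\}_j\cup\{\{s\}:s\in S\}$ yields compatibility of $M$ with $T$. By construction $\R\setminus M$ consists entirely of points of color $c$, and together with the dense presence of $C_j$ realizing each $t\in T$ this exhibits $(\R,P)$ as an $n$-uniform sum of $T$.

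For the forward direction, fix a countable basis $\{U_q\}$ of $\R$ such that every nonempty open subset contains some $\overline{U_q}$, and for each pair $(q,t)$ use the hypothesis to pick a Cantor set $K_{q,t}\subseteq U_q$ realizing $t$. Enlarge the given $M$ to $M'=M\cup\bigcup_{q,t}\overline{K_{q,t}}$; this remains meager $F_\sigma$, and by \cref{unioncompat} applied to the $F_\sigma$-pieces of $M$ together with the $\overline{K_{q,t}}$ (each compatible by the preliminary observation), $M'$ remains compatible with $T$. Cover $M'$ by bounded compact nowhere dense subsets and apply \cref{compmeagreal} to each, obtaining a presentation $M'=\bigcup_k\overline{D_k}\cup R$ with each $\overline{D_k}$ a copy of $\overline\C$ and $R$ countable. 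Consider the countable family $\mathcal G=\{\overline{K_{q,t}}\}_{q,t}\cup\{\overline{D_k}\}_k$ of copies of $\overline\C$. The density hypothesis of \cref{refincantordense} holds for the subfamily indexed by $T$ (every nonempty open subset contains some $\overline{K_{q,t}}$); the rearrangement argument from its proof, run with this partial density, still produces a disjoint refinement $\{E_\ell\}$ of copies of $\overline\C$ covering $\bigcup\mathcal G$, with the additional property that for each $t\in T$ and each nonempty open $U$, some $E_\ell$ equals an $\overline{K_{q,t}}\subseteq U$.

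Finally, set $C_\ell=E_\ell\setminus\{\inf E_\ell,\sup E_\ell\}$; these are Cantor sets whose closures $\overline{C_\ell}=E_\ell$ are pairwise disjoint. Each $C_\ell$ lies in an element of $\mathcal G$ and thus in $M'$, and is therefore compatible with $T$. The density property yields, in each nonempty open $U$, some $C_\ell=K_{q,t}$ realizing $t$. For the color condition, $\bigcup_\ell C_\ell$ differs from $\bigcup\mathcal G\supseteq M'\setminus R$ only by the countable set of endpoints of the $E_\ell$, so $\R\setminus\bigcup_\ell C_\ell\subseteq(\R\setminus M')\cup R\cup\{\text{endpoints}\}$, which is the union of single-color points (since $\R\setminus M'\subseteq\R\setminus M$) and countably many exceptions. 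The main delicate point is orchestrating the refinement to simultaneously achieve disjoint closures, density of realizations of each $t\in T$, and coverage of the non-single-color portion of $M$; this is why the two subfamilies are combined into $\mathcal G$ and the $T$-indexed rearrangement from the proof of \cref{refincantordense} is invoked.
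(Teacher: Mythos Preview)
Your proof is correct and follows essentially the same approach as the paper. For sufficiency both you and the paper take $M$ to be the union of the $\overline{C_j}$ together with the countable set of off-color points and invoke \cref{unioncompat}; for necessity both choose dense realizations $K_{q,t}$ (the paper's $D_{nt}$), reduce the closed pieces of $M$ to copies of $\overline\C$ (you via \cref{compmeagreal}, the paper by passing to perfect kernels), and then apply \cref{refincantordense} to the combined family before stripping endpoints. Your explicit observation that the density hypothesis of \cref{refincantordense} is only needed on the $T$-indexed subfamily is a point the paper leaves implicit.
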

\begin{proof}
  For sufficiency, let $M$ be the union of all $\overline{C_j}$'s and countable labels of $\R\setminus\bigcup_jC_j$. Then $M$ is compatible with $T$ by \cref{unioncompat}.\par
  For necessity, choose for all nonempty bounded basic open $U_k$ and $t\in T$ a $D_{kt}\subseteq U_k$ realizing $t$. Let $(E_i)_{i\in\N}$ be a sequence of compact, nowhere dense sets that are compatible with $T$ such that $\R\setminus\bigcup_iE_i$ has a cocountable label. Removing isolated points, we may assume $E_i\cong\overline\C$. \Cref{refincantordense} for $I=T$ and $\setb{\overline{D_{kt}}}{k,t}\cup\setb{E_i}{i}$ yields a sequence of pairwise disjoint copies of $\overline\C$ compatible with $T$ such that for all $t\in T$ every nonempty open subset includes such a copy of the form $\overline{D_{kt}}$. Removing endpoints from these copies yields the desired sequence $(C_j)_j$ of Cantor sets.
\end{proof}
\begin{lemma}\label{isunifsum}
  If $P$ is $(n+1)$-uniform, then $(\R,P)$ is an $n$-uniform sum of the set $T$ of Cantor $n$-subtypes of $(\R,P)$.
\end{lemma}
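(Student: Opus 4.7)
The plan is to read off the witnesses for an $n$-uniform sum directly from the data of the $(n{+}1)$-type, using the Baire property to control non-comeager colors.

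First, let $t = \cTh_{n+1}(\R,P)$, with underlying $0$-type $(c_0,J)$ and realized Cantor sets $T'$. By the two Cantor-set clauses in the definition of ``$P$ satisfies $t$'', every $n$-uniform Cantor subset of $\R$ has coarse type in $T'$, and conversely every $s\in T'$ is realized in every nonempty open set by a Cantor subset (which is automatically $n$-uniform, since it has a coarse type). Thus $T'=T$, and the first clause in the definition of $n$-uniform sum (realizations of every $t\in T$ in every nonempty open subset) is immediate.

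Next I would produce the meager $F_\sigma$ witness $M$. By \cref{trivunif}(a), on every open subinterval the underlying $0$-type is still $(c_0,J)$, so $c_0$ is comeager and every color $c\ne c_0$ is either empty (omitted) or dense (realized) there. Since each $P^{-1}(c)\in\mc B$ has the Baire property, the Baire alternative forces each non-comeager $P^{-1}(c)$ to be meager on every open subinterval, hence meager in $\R$. Therefore $A=\R\setminus P^{-1}(c_0)=\bigcup_{c\ne c_0}P^{-1}(c)$ is a finite union of meager sets, and in particular is meager; write it as a countable union of nowhere dense sets and enlarge by taking closures to obtain a meager $F_\sigma$-set $M\supseteq A$. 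Then $\R\setminus M\subseteq P^{-1}(c_0)$ has single color $c_0$.

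Finally, compatibility of $M$ with $T$ is automatic: any $n$-uniform Cantor subset $D\subseteq M$ is in particular an $n$-uniform Cantor subset of $\R$, so $\cTh_n(D,P)\in T'=T$, and $D$ itself is a realization of its own type (into which it trivially embeds). There is no real obstacle in this proof; the statement amounts to a translation of the definition of $(n{+}1)$-type into the language of uniform sums. The only step requiring care is ruling out that a non-comeager color could fail to be meager, and this is handled by applying the Baire alternative together with the fact that uniformity of $\R$ propagates to every open subinterval.
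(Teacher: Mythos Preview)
Your proof is correct and follows essentially the same route as the paper's. The paper is more terse: it simply observes that \emph{every} subset of $\R$ is compatible with $T$ (since $T$ contains the type of every $n$-uniform Cantor subset of $\R$, any such subset embeds into itself), and then takes $M$ to be any meager $F_\sigma$-set containing the complement of the comeager color. Your Baire-alternative argument in the second paragraph is more work than needed---the definition of satisfying the $0$-type already says directly that $P^{-1}(c_0)$ is comeager, so its complement is meager without appealing to the alternative or to \cref{trivunif}---but it is of course valid.
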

\begin{proof}
  Every subset is compatible with $T$. Thus, we may choose $M$ to be any meager $F_\sigma$ set that includes the complement of the comeager label. By $(n+1)$-uniformity, every element of $T$ occurs in every nonempty open subset.
\end{proof}
\begin{corollary}\label{unifmax}
  Every satisfiable $n$-type $s$ is realized by an $n$-uniform sum of some set $T$ of satisfiable $n$-types. Moreover, for $n>0$ we may choose $T$ such that $\setb{t|(n-1)}{t\in T}$ is the set of Cantor subtypes of $s$.
\end{corollary}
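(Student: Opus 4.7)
The plan is to extract a realization of $t$ with one extra degree of uniformity and then invoke \cref{isunifsum} directly.

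Fix a satisfiable $n$-type $t$ together with any coloring $(\R, P_0)$ realizing it. Applying \cref{trivunif}(b) with parameter $n+1$ yields an open subinterval $I \subseteq \R$ on which $P_0|_I$ is $(n+1)$-uniform, and \cref{trivunif}(a) ensures $\cTh_n(I, P_0|_I) = t$. Transporting along any order isomorphism $I \cong \R$ produces a coloring $(\R, P)$ realizing $t$ that is $(n+1)$-uniform. Then \cref{isunifsum} exhibits $(\R, P)$ as an $n$-uniform sum of the set $T$ of $n$-types of its $n$-uniform Cantor subsets; each $t' \in T$ is Cantor-satisfiable by its witness, hence satisfiable.

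For the moreover clause, assume $n > 0$ and verify the two inclusions of $\setb{t'|(n-1)}{t'\in T} = \text{(realized Cantor sets of }t\text{)}$. The forward inclusion is immediate: if $t' = \cTh_n(C,P)$ for an $n$-uniform Cantor subset $C$, then $C$ is a fortiori $(n-1)$-uniform, so $t'|(n-1) = \cTh_{n-1}(C,P)$ lies among the realized Cantor sets of $t$. For the converse, let $s$ be a realized Cantor set of $t$, witnessed by an $(n-1)$-uniform Cantor subset $C \subseteq \R$ with $\cTh_{n-1}(C, P) = s$. A Ramsey argument inside $C \cong \C$, via \cref{somewhereunif} (or \cref{cantorramsey}) applied with a parameter large enough to detect coarse $n$-types, produces a Cantor subset $C_0 \subseteq C$ on which $P$ is $n$-uniform in the coarse sense. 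Then $t' := \cTh_n(C_0, P) \in T$, and \cref{trivunif}(a) applied to the $(n-1)$-uniform $C$ gives $t'|(n-1) = \cTh_{n-1}(C_0, P) = \cTh_{n-1}(C, P) = s$.

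The main technical care point is ensuring that the sub-object $C_0$ extracted by Ramsey is a genuine Cantor subset of $\R$ (homeomorphic to $\C$ rather than $\overline{\C}$) and that sufficient $\pTh_k$-uniformity entails coarse $n$-uniformity. Both follow from routine facts: $\pTh_k$ for large enough $k$ determines the comeager color, realized colors, and realized Cantor subtypes that constitute the coarse $n$-type, and any nondegenerate interval inside a copy of $\C$ contains an embedded copy of $\C$ that can be substituted for $C_0$ if necessary.
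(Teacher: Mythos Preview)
Your proof is correct and follows essentially the same route as the paper's: obtain a realization with one extra level of uniformity via \cref{trivunif}, invoke \cref{isunifsum}, and then identify the $(n-1)$-restrictions of $T$ with the realized Cantor sets. The paper compresses the moreover clause into a single sentence (``Their $(n-1)$-types are the realized Cantor sets''), implicitly relying on the same Ramsey-inside-$C$ step you spell out explicitly for the backward inclusion; your version is simply more detailed about this point.
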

\begin{proof}
  By \cref{trivunif}, $s$ has an $(n+1)$-uniform realization $P$. By \cref{isunifsum}, $(\R,P)$ is an $n$-uniform sum of Cantor its $n$-subtypes. Their $(n-1)$-types are the Cantor subtypes of $s$.
\end{proof}

\subsection{Computing uniform sums}
The aim of this subsection is to prove computability of the coarse type of a uniform sum from the coarse types of its summands (\cref{unifcomp}). Making the computation compatible with our choice to regard labelings $P\colon\C\to I$ as labelings $\R_\C(P)\colon\R\to I\amalg I\times I$ requires a fine grained analysis of the endpoints of jumps in nested Cantor sets.\par
Let $P\colon\C\to I$ be an $(n+1)$-uniform $\mc B$-labeling and $D\subseteq\C$ be an $n$-uniformly labeled meager Cantor subset. For $\pi$ as in \cref{cantorquot}, consider the following commutative diagram:
\[\begin{tikzcd}
  \C\ar{d}{\pi_\C}&\ar[symbol=\supseteq]{l}{}D\ar{d}{\pi_\C}\ar{r}{\pi_D}&\R\ar[equal]{d}\\
  \R&\ar[symbol=\supseteq]{l}{}\pi_\C(D)\ar{r}{\pi_{\pi_\C(D)}}&\R
\end{tikzcd}\]
Then $D$ induces two labelings of the reals. One of them, let us call it $Q_1(P,D)$, with labels in $I\amalg I\times I$, is $\R_D(P|_D)$ for the labeling of $D$ with $m$ labels obtained as a subset of $\C$. The other one, let us call it $Q_2(P,D)$, with labels in $(I\amalg I\times I)\amalg(I\amalg I\times I)\times(I\amalg I\times I)$, is $\R_{\pi_\C(D)}(\R_\C(P)|_{\pi_\C(D)})$ for the labeling of $\pi_\C(D)$ with labels in $I\amalg I\times I$ obtained as a restriction of $\R_\C(P)$ in the lower left corner.
\begin{lemma}\label{relabelcol}
  The maps $f\colon\cTh_n(\R,Q_2(P,D))\mapsto\cTh_n(\R,Q_1(P,D))$ are well-defined in $P,D$ and computable uniformly in $n$. In fact, $Q_1(P,D)$ is obtained by postcomposing $Q_2(P,D)$ with a map
  \[(I\amalg I\times I)\amalg(I\amalg I\times I)\times(I\amalg I\times I)\to I\amalg I\times I.\]
\end{lemma}
\begin{proof}
  We abbreviate $Q_i=Q_i(P,D)$. Recall that $\R_E(S)$ maps $x$ to the ordered $1$- or $2$-tuple of labels $S(y)$ of elements $y$ of the fiber $\pi_E^{-1}(\set x)$.\par
  In particular, $Q_1\colon\R\to I\amalg I\times I$ maps $x$ to the ordered $1$- or $2$-tuple $(P(y))_{y\in\pi_\C^{-1}(\set x)}$ of labels $P(y)$ of elements $y$ of the fiber of $x$ under $\pi_\C\colon\C\to\R$. For instance, if $\pi_\C^{-1}(\set x)=\set{y_1,y_2}$ with $y_1<y_2$, then $Q_1(x)=(P(y_1),P(y_2))$.\par
  Similarly, $Q_2\colon\R\to(I\amalg I\times I)\amalg(I\amalg I\times I)\times(I\amalg I\times I)$ maps $x$ to the ordered tuple of ordered tuples $((P(z))_{z\in\pi_\C^{-1}(\set y)})_{y\in\pi_{\pi_\C(D)}^{-1}(\set x)}$. First, suppose that $\pi_\C^{-1}(\set y)\subseteq D$, or equivalently $\pi_\C^{-1}(\set y)=\pi_\C|_D^{-1}(\set y)$. If we equivalently regard $Q_2$ as a map
  \[\R\to(I\amalg I\times I)\amalg(I\amalg I\times I)\times(I\amalg I\times I)\cong I\amalg I\times I\amalg I\times I\amalg I\times I\times I\amalg I\times I\times I\amalg I\times I\times I\times I,\]
  then it maps such an $x$ to an ordered tuple $(P(z))_{z\in\pi_D^{-1}(\set x)}$ of labels of elements of the fiber of $x$ under $\pi_{\pi_\C(D)}\circ\pi_\C|_D=\pi_D$. For instance, if $\pi_D^{-1}(\set x)=\set{x_1,x_2}$ with $x_1<x_2$, then $Q_2(x)=(P(x_1),P(x_2))$. This already describes the required map of finite sets on the first three summands $I\amalg I\times I\amalg I\times I$. Namely, it is the identity on the first summand $I$ and identifies the second summand $I\times I$ with the third summand $I\times I$.\par
  If $\pi_\C^{-1}(\set y)\nsubseteq D$, then the fiber of $\pi_{\pi_\C(D)}\circ\pi_\C$ contains unnecessary labels. However, it is still possible to determine which levels arise from the fiber of $\pi_D$. Namely, since $D$ is a Cantor subset of $\C$, the left endpoints of jumps in $D$ are no right endpoints of jumps in $\C$ and vice versa. In other words, the fiber of $\pi_D=\pi_{\pi_\C(D)}\circ\pi_\C|_D$ consists of the first and the last element of the fiber of $\pi_{\pi_\C(D)}\circ\pi_\C$. Thus an element of a summand $I\times I\times I$ or $I\times I\times I\times I$ is mapped to the pair in $I\times I$ of its first and its last entry.
\end{proof}
\begin{lemma}\label{compcantsub}
  The maps
  \[\ucsub\colon\cTh_{n+1}(\C,P)\mapsto\setb{\cTh_n(D,P)}{D\subseteq\C\text{ an }n\text{-uniformly labeled Cantor subset}}\]
  are well-defined in $P$ and computable uniformly in $n$. In fact, $\ucsub(t)$ is the union of the image under $f$ of \cref{relabelcol} of the Cantor $n$-subtypes of $t$ with $\set{t|n}$.
\end{lemma}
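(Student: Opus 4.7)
The plan is to establish both inclusions in the claimed equality. Throughout, I will use the following functoriality principle, which I would verify by induction on $n$ from the combinatorial definition of $\fcTh_n$: if $Q'$ is an $n$-uniform coloring of $\R$ and $Q=g\circ Q'$ is a coarsening along a map $g$ of color sets, then $Q$ is also $n$-uniform and $\cTh_n(\R,Q)$ is the image of $\cTh_n(\R,Q')$ under the induced map on $\fcTh_n$. This applies in particular to the color relabeling $f'$ of \cref{relabelcol}, with its induced map $f$ on coarse types.

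For the inclusion $\supseteq$: the type $t|n$ arises by taking $D=\C$, itself an $n$-uniform Cantor subset of $\C$ (since $P$ is $(n+1)$-uniform, hence $n$-uniform) with $\cTh_n(\C,P)=t|n$. Given a realized Cantor set $r$ of $t$, I would choose an $n$-uniform Cantor subset $E\subseteq\R$ equipped with its $\R_\C(P)$-coloring of coarse type $r$, and take its preimage $D=\pi^{-1}(E)'$, which by \cref{projcantorbij} is a meager Cantor subset of $\C$ with $\pi(D)=E$. Then the coloring $Q_2$ of \cref{relabelcol} has coarse type $r$, so the functoriality principle applied to $Q_1=f'\circ Q_2$ shows $D$ is $n$-uniform with $\cTh_n(D,P)=f(r)$.

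For the inclusion $\subseteq$: let $D$ be an $n$-uniform Cantor subset of $\C$. If $D$ is non-meager, then by \cref{closmeag} it has nonempty interior in $\C$, hence contains an open subinterval $I$ of $\C$; by $(n+1)$-uniformity of $P$ and \cref{trivunif}(a), $\cTh_{n+1}(I,P)=t$, so $\cTh_n(I,P)=t|n$, and since $I$ is also an open subinterval of $D$, the $n$-uniformity of $D$ together with \cref{trivunif}(a) yields $\cTh_n(D,P)=\cTh_n(I,P)=t|n$. If $D$ is meager, I apply \cref{trivunif}(b) to the coloring $Q_2$ of $\R$ to obtain an open interval $I\subseteq\R$ on which $Q_2|_I$ is $n$-uniform of some coarse type $r$; identifying $Q_2|_I$ as the coloring obtained from the Cantor subset $\pi(D)\cap I\subseteq\R$ (equipped with its $\R_\C(P)$-coloring) via the $\R_X$-construction shows that $r$ is a realized Cantor set of $t$. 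The functoriality principle then gives that $Q_1|_I=f'\circ Q_2|_I$ is $n$-uniform of type $f(r)$, and $n$-uniformity of $Q_1$ on $\R$ finally yields $\cTh_n(D,P)=\cTh_n(\R,Q_1|_I)=f(r)$.

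The main obstacle is the functoriality principle itself (verified by induction on $n$), together with the clean identification, in the meager case of $\subseteq$, of $Q_2|_I$ with the appropriate $\R_X$-construction on $\pi(D)\cap I$, which is what ensures that the type $r$ extracted from \cref{trivunif}(b) genuinely matches a realized Cantor set of $t$ rather than merely an abstract coarse type.
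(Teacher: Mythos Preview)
Your proof is correct and follows essentially the same route as the paper's: split into nonmeager versus meager $D$, use \cref{projcantorbij} to parametrize Cantor subsets of $(\R,\R_\C(P))$ by meager Cantor $D\subseteq\C$, and invoke the color relabeling of \cref{relabelcol} to pass between $Q_1$ and $Q_2$. The paper's two-sentence proof leaves implicit the point you make explicit via \cref{trivunif}(b): an $n$-uniform $Q_1$ need not arise from an $n$-uniform $Q_2$, so for the meager $\subseteq$ direction one first passes to a subinterval on which $Q_2$ is $n$-uniform and then uses the global $n$-uniformity of $Q_1$ to recover the full type. Your functoriality principle is exactly what the second sentence of \cref{relabelcol} provides.

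One notational slip: in your meager $\subseteq$ argument you write ``$\pi(D)\cap I$'', but $I$ lives in the target $\R$ of $\pi_{\pi_\C(D)}$ (the lower-right vertex of the commutative diagram preceding \cref{relabelcol}), not in the copy of $\R$ containing $\pi_\C(D)$. What you actually want is the preimage of $I$ in $\pi_\C(D)$ under $\pi_{\pi_\C(D)}$; after removing possible endpoints this is the $n$-uniform Cantor subset of $(\R,\R_\C(P))$ whose type $r$ is then a realized Cantor set of $t$. The argument is unaffected once this is said correctly.
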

\begin{proof}
  If $D$ is nonmeager, then by uniformity $\cTh_n(D,P)=\cTh_n(\C,P)$. For meager $D$, the type $\cTh_n(D,P)$ is $\cTh_n(\R,Q_1(P,D))$ in the above discussion. Since $D\mapsto\pi_\C(D)$ in \cref{projcantorbij} is a well-defined surjection, the Cantor subtypes of $\cTh_{n+1}(\C,P)$ are precisely the $\cTh_n(\R,Q_2(P,D))$ as $D$ varies.
\end{proof}
\begin{definition}
  A $0$-type $t_0$ is \textbf{consistent with} a set $T$ of $n$-types if all realized labels of elements of $T$ are realized labels of $t_0$.
\end{definition}
\begin{lemma}\label{compunifprec}
  Let $(\R,P)$ be an $(n+1)$-uniform sum of $T$ and let $S$ denote the set of Cantor $n$-subtypes of $\cTh_{n+1}(\R,P)$. Then $t\in S$ if and only if at least one of the following conditions holds:
  \begin{parts}
  \item $t\in\bigcup_{s\in T}\ucsub(s)$.
  \item $t$ is the $n$-type of an $n$-uniform sum of some $T'\subseteq\bigcup_{s\in T}\ucsub(s)$. Moreover, $t|0$ is Cantor-satisfiable and consistent with $T'$, has the same comeager label as $P$, and all components of realized labels of $t|0$ are realized by $(\R,P)$.\label{compunifprecb}
  \end{parts}
\end{lemma}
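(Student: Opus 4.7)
The plan is to prove both directions: direction $\Leftarrow$ by Cantor constructions via \cref{findcantor}, and direction $\Rightarrow$ by a Baire category dichotomy applied to the intersection $C\cap M$, where $C\subseteq\R$ is an $n$-uniform Cantor subset realizing $t$ and $M$ is the meager $F_\sigma$-set witnessing that $(\R,P)$ is an $(n+1)$-uniform sum of $T$.

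For direction $\Leftarrow$(a), if $t\in\ucsub(s)$ then a Cantor realization of $s$ has an $n$-uniform Cantor subset of $n$-type $t$; since realizations of $s$ occur in every nonempty open subset of $\R$, this realization and its subset lie inside $\R$, giving $t\in S$. For direction $\Leftarrow$(b), I would invoke \cref{findcantor}: each $t'\in T'$ has Cantor realizations in every nonempty open subset of $\R$ (picked inside realizations of the corresponding $s\in T$), which are arranged as the $B_n$; the set $A$ is an $F_\sigma$-meager set including $M$ and the non-$c_0$ points of $\R\setminus M$; and the $D_i$ together with the set $E$ encode the realized jump-color pairs of $t|0$ as allowed by Cantor-satisfiability. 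The resulting $C\subseteq\R$ is a Cantor set inheriting the $n$-uniform sum structure of $T'$ and the $0$-type $t|0$, hence realizing $t$.

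For direction $\Rightarrow$, let $t=\cTh_n(C,P)$ with $C\subseteq\R$ an $n$-uniform Cantor subset, and write $M=\bigcup_i K_i$ with each $K_i$ closed. First suppose $C\cap M$ is nonmeager in $C$. Baire's theorem applied to the countable union $C\cap M=\bigcup_i(C\cap K_i)$ of closed subsets of $C$ yields some $i$ for which $C\cap K_i$ has nonempty interior in $C$. Inside this interior, pick a Cantor subset $W$ (which exists as any nonempty open subset of $C\cong\C$ contains one). Then $\cTh_n(W,P)=t$ by $n$-uniformity of $C$, and compatibility of $K_i$ with $T$ gives $t\in\ucsub(s)$ for some $s\in T$, so (a) holds. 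Otherwise $C\cap M$ is meager in $C$, whence $C\setminus M$ is comeager in $C$ with single color $c_0$. By \cref{trivunif}, $C$ has a nonempty open subinterval $U$ on which $P$ is $(n+1)$-uniform, and $\cTh_n(U,P)=t$ by $n$-uniformity of $C$. Set $T'=\setb{\cTh_n(D,P)}{D\text{ is an }n\text{-uniform Cantor subset of }U\cap M}$. The first case argument applied to each such $D$ (lying inside $M$) shows $T'\subseteq\bigcup_s\ucsub(s)$. Moreover, $U$ is an $n$-uniform sum of $T'$: the set $N=U\cap M$ is meager $F_\sigma$ in $U$ with complement colored $c_0$, is compatible with $T'$ by construction, and by $(n+1)$-uniformity of $U$ every nonempty open subset of $U$ realizes each $t'\in T'$. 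The color conditions on $t|0$ follow because the comeager color is $c_0$, the realized non-jump colors of $t|0$ lie among those of $P$ (as $U\subseteq\R$) and contain those of each $t'\in T'$ (which is realized inside $U$), and $t|0$ is Cantor-satisfiable by $U$.

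The principal obstacle is the choice of $T'$ in the second case of direction $\Rightarrow$: naively applying \cref{isunifsum} to $U$ would yield the set of all $n$-uniform Cantor subtypes of $U$, which may include ``generic'' subtypes (with comeager color $c_0$) that need not lie in $\bigcup_s\ucsub(s)$. Restricting $T'$ to subtypes realizable inside $U\cap M$ solves this, since such subtypes inherit compatibility with $T$ from $M$ while density in $U$ is preserved by the $(n+1)$-uniformity of $U$: realized Cantor sets of $\cTh_{n+1}(U,P)$ appear in every nonempty open subset.
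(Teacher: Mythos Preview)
Your approach is essentially the paper's: the same Baire dichotomy for $\Rightarrow$ and the same appeal to \cref{findcantor} for $\Leftarrow$(b). The $\Rightarrow$ argument is fine; your choice $T'=\{\cTh_n(D,P):D\text{ an $n$-uniform Cantor subset of }U\cap M\}$ is slightly different from the paper's (which takes all $t'\in\bigcup_s\ucsub(s)$ realized in $D$) but works for the same reasons, and your diagnosis of the ``principal obstacle'' is exactly right.

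There is a genuine gap in your $\Leftarrow$(b) sketch. You write that $A$ is ``an $F_\sigma$-meager set including $M$'', but this is inconsistent with the rest of the construction: the $B_n$ you choose realize types $t'\in T'$, hence carry non-$c_0$ colors, hence must meet $M$ (since $\R\setminus M$ is monochromatic $c_0$); yet \cref{findcantor} requires $A$ disjoint from the $B_n$. What you actually need is $A\supseteq M\setminus(\text{chosen }B_n,\ x_{cm},\ y_{c'm})$, and then you must check this is $F_\sigma$. In general $M$ minus a countable union of closed sets is only $G_\delta$-in-$M$, so this fails without further care. The paper handles this by first passing through \cref{unifsumdisj} to get pairwise disjoint Cantor sets $C_j$, then placing each $B_n$, $x_{cm}$, $y_{c'm}$ inside a \emph{distinct} $C_j$, so that $A$ decomposes as a countable union of sets of the form $C_j\setminus(\text{one closed piece})$, each open in $C_j$ and hence $F_\sigma$. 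You also omit the points $x_{cm}$ of the realized non-jump colors of $t|0$ that are not forced by $T'$; the statement allows $t|0$ to realize strictly more colors than $\bigcup_{t'\in T'}t'|0$, and these must be inserted by hand into the $B$-family.
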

\begin{proof}
  Let $(C_j)_j$ be as in \cref{unifsumdisj}. We first show that every $t\in S$ satisfies (a) or (b).\par
  First, suppose that some $C_j$ is nonmeager in a realization of $t$. Thus $C_j$ has nonempty interior and, by uniformity, $C_j$ includes a realization of $t$. Since $C_j$ is compatible with $T$, (a) follows.\par
  Otherwise, let $D$ be a realization of $t$ such that all $C_j$ are meager in $D$. We show (b). Since $\bigcup_jC_j$ is meager in $D$, $D$ has the same comeager label as $\R$ and it remains to show that $(D,P)$ is a uniform sum of some $T'$. We set $T'$ as the set of $t'\in\bigcup_{s\in T}\ucsub(s)$ that are realized in $D$. By \cref{trivunif} we may assume $D$ to be $(n+1)$-uniformly labeled, whence each element of $T'$ is realized in every nonempty open subset. It remains to show that $D$ includes countably many closed nowhere dense $E_j$ compatible with $T'$ such that $D\setminus\bigcup_jE_j$ has a cocountable label. Set $E_j=\overline{C_j\cap D}$. Then $D\setminus\bigcup_jE_j\subseteq\R\setminus\bigcup_jC_j$ has a cocountable label. For showing that $E_j$ is compatible with $T'$, let $F$ be an $n$-uniformly labeled Cantor subset of $E_j$. Then $F$ is a subset of $C_j$, thus by compatibility of $C_j$ a subset of a realization of some $s\in T$. Since $\cTh_n(F,P)$ is realized in $\overline D\supseteq E_j$ and thus also in $D$, we conclude that $\cTh_n(F,P)\in T'$.\par
  Since all $t$ satisfying (a) clearly lie in $S$, it remains to show that all $t$ satisfying (b) lie in $S$. For all $t'\in T'$ or realized labels $c$ of $t|0$ or components $c'$ of realized labels of jumps, and nonempty basic open $U_m\subseteq\R$, choose realizations $E_{t'm}\subseteq U_m$ of $t'$ and points $x_{cm}\in U_m,y_{c'm}\in U_m$ of labels $c,c'$ such that all $E_{t'm},\set{x_{cm}},\set{y_{c'm}}$ are included in pairwise distinct $C_{j_{t'm}},C_{j_{cm}},C_{j_{c'm}}$. Set $B=\bigcup_mB_m$ with $B_m=\bigcup_{t'\in T'}\overline{E_{t'm}}\cup\bigcup_c\set{x_{cm}}$ and $D_{c'}=\setb{y_{c'm}}m$. Let $A$ be the union of $\bigcup_jC_j\setminus\pare{B\cup\bigcup_{c'}D_{c'}}$ and all countable labels of $\R\setminus\bigcup_jC_j$. Then $A$ is $F_\sigma$ as $\bigcup_jC_j\setminus\pare{B\cup\bigcup_{c'}D_{c'}}$ is the union of all $C_{j_{t'm}}\setminus\overline{E_{t'm}},C_{j_{cm}}\setminus\set{x_{cm}},C_{j_{c'm}}\setminus\set{y_{c'm}}$ and all $C_j$ including no $E_{t'm},\set{x_{cm}}$ or $\set{y_{c'm}}$. Let $C$ be the Cantor set obtained from \cref{findcantor} with these $A,B,D_{c'}$ and $E$ given by the realized labels of jumps. Then $C$ has the given labels of jumps by construction; since $C$ contains an $x_{cm}$ in each nonempty open subset, $C$ realizes all realized labels of non-jumps; since $\bigcup_jC_j\subseteq A\cup B$ is meager in $C$, it has the same comeager label; and since $C$ is disjoint from $A$ and meets $\bigcup_{c'}D_{c'}$ only in jumps, $C$ omits all omitted labels of non-jumps. Thus, $C$ realizes $t|0$. Furthermore, $C$ includes an $E_{t'm}$ in each nonempty open subset, and since $C$ is disjoint from $A$, the restriction $C\setminus\bigcup_{t'm}E_{t'm}$ has a cocountable label, while each $C\cap E_{t'm}$, being a subset of $E_{t'm}$, is compatible with $T'$. Thus, $(C,P)$ is a uniform sum of $T'$.
\end{proof}
\begin{proposition}\label{unifcomp}
  Let $(\R,P)$ be an $n$-uniform sum of $T$. Then the maps $(\cTh_0(\R,P),T)\mapsto\cTh_n(\R,P)$ are well-defined in $T$ and $n$-uniform sums $(\R,P)$ and computable uniformly in $n$.
\end{proposition}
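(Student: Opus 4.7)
We argue by induction on $n$. The base case $n=0$ is immediate, since $\cTh_0(\R,P)$ is already part of the input.

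For the inductive step, suppose the claim holds at level $n-1$, and let $(\R,P)$ be an $n$-uniform sum of $T$. Writing $\cTh_n(\R,P)=(\cTh_0(\R,P),S)$ where $S\subseteq\fcTh_{n-1}(m+m^2)$ denotes the set of realized Cantor sets, it suffices to compute $S$. We apply \cref{compunifprec} with the index shifted from $n+1$ to $n$: an $(n-1)$-type $t$ lies in $S$ if and only if either $t\in\bigcup_{s\in T}\ucsub(s)$, or $t$ is the $(n-1)$-type of some $(n-1)$-uniform sum of a subset $T'\subseteq\bigcup_{s\in T}\ucsub(s)$ whose underlying $0$-type $t_0$ is Cantor-satisfiable and meets the explicit color compatibility requirements of \cref{compunifprecb} with respect to $\cTh_0(\R,P)$ and $T'$.

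By \cref{compcantsub}, each $\ucsub(s)$ is computable from $s\in T$, so the finite set $\bigcup_{s\in T}\ucsub(s)$ can be written down explicitly; this handles case~(a). For case~(b), we enumerate the finitely many eligible pairs $(T',t_0)$. For each such pair, the construction in the ``if'' direction of the proof of \cref{compunifprec} witnesses that an $(n-1)$-uniform sum of $T'$ with $0$-type $t_0$ actually exists; its $(n-1)$-type is then returned by the inductive hypothesis applied to the input $(t_0,T')$, and we add this type to $S$. Collecting contributions from both cases yields $S$ and hence $\cTh_n(\R,P)$.

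Well-definedness follows immediately, since the characterization shows $\cTh_n(\R,P)$ depends only on $(\cTh_0(\R,P),T)$, and uniformity in $n$ is clear because each step, including the recursive call, is uniform in $n$. All the substantive work is packaged in \cref{compunifprec} and \cref{compcantsub}, so no genuine new obstacle arises; the proposition is essentially a structural recursion that bootstraps those two lemmas into a full computation.
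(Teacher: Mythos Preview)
Your proof is correct and follows essentially the same approach as the paper's: induction on $n$, with the inductive step computing $\bigcup_{s\in T}\ucsub(s)$ via \cref{compcantsub} and then the set of realized Cantor sets via \cref{compunifprec}, invoking the inductive hypothesis to evaluate case~(b). Your version merely spells out the existence of the uniform sums needed in case~(b) and the well-definedness more explicitly than the paper's terse two-line proof, but the underlying argument is identical.
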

\begin{proof}
  We proceed by recursion on $n$, and $n=0$ is vacuous. If $(\R,P)$ is an $(n+1)$-uniform sum of $T$, then $\bigcup_{t\in T}\ucsub(t)$ is computable by \cref{compcantsub}. Using \cref{compunifprec}, we may recursively compute the set of Cantor subtypes.
\end{proof}
Well-definedness in \cref{unifcomp} yields uniqueness in the following the definition.
\begin{definition}
  For a $0$-type $t_0$ and a set $T$ of $n$-types, the $n$-uniform sum of $T$ with underlying $0$-type $t_0$ is the unique $n$-type that restricts to $t_0$ and is realized by an $n$-uniform sum of $T$.
\end{definition}
However, the conditions of \cref{defunifsum} are not expressible in terms of $\cTh_n$. Thus, sometimes $(\R,P)$ is no $n$-uniform sum of $T$, but $\cTh_n(\R,P)$ is an $n$-uniform sum of $T$.
\begin{proposition}\label{unifsumsat}
  Let $T$ be a set of Cantor-satisfiable $n$-types and $t_0$ be a $0$-type consistent with $T$. Then the $n$-uniform sum of $T$ with $0$-type $t_0$ is satisfiable.
\end{proposition}
\begin{proof}
  The iso-uniform sum of realizations of the summands and the isomorphism classes of points of all realized labels is an $n$-uniform sum with the given $0$-type $t_0$.
\end{proof}

\subsection{Computing coarse types}
The aim of this subsection is to prove decidability for coarse types (\cref{coarsecomput}). For this purpose, we prove that each coarse type is realized by an iterated iso-uniform sum of types that have a single realized label (\cref{satisconstr}).
\begin{lemma}\label{contunifsub}
  If $(\C,P)$ is an $n$-uniform sum of $T$ and $T'\subseteq T$, then $\C$ includes a meager Cantor set with the same $0$-type that is an $n$-uniform sum of $T'$.
\end{lemma}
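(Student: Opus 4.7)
My plan is to adapt the ``only if'' construction in \cref{compunifprec}~(b) to work inside $\C$, using realizations drawn only from the subfamily of Cantor subsets that corresponds to $T'$. First, I invoke \cref{unifsumdisj} on $(\C,P)$ and $T$ to obtain pairwise disjoint-closure Cantor subsets $(C_j)_{j\in\N}$ of $\C$, each compatible with $T$, with every $t\in T$ densely realized by some $C_j$, and $\C\setminus\bigcup_jC_j$ cocountably of color $c_0$ (the comeager color of $(\C,P)$). Let $J'=\setb{j\in\N}{C_j\text{ realizes some }t\in T'}$, so that $(C_j)_{j\in J'}$ still densely realizes every $t\in T'$.

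Next, enumerate a basis $(U_m)_m$ of $\C$ and select: for each $t\in T'$ and each $m$, distinct indices $j(t,m)\in J'$ with $C_{j(t,m)}\subseteq U_m$ realizing $t$; and for each realized color $c$ of $(\C,P)$ and each $m$, a point $x_{cm}\in U_m$ of color $c$, drawing $x_{c_0,m}$ from $\C\setminus\bigcup_jC_j$ and $x_{cm}$ for $c\ne c_0$ from any $C_j$ carrying color $c$, with all of these inside pairwise distinct $C_j$. I then apply \cref{findcantor} inside $\C$ with $A=(\bigcup_{j\notin J'}C_j)\setminus\bigcup_{c,m}\set{x_{cm}}$ (a meager $F_\sigma$), $B=\bigcup_{t,m}\overline{C_{j(t,m)}}$, and $D_c=\setb{x_{cm}}{m\in\N}$. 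The proof of \cref{findcantor} transfers to $\C$ since it only uses local density at non-jump points, and the countable jumps of $\C$ inside $\bigcup_jC_j$ can be avoided by the $D_c$. This produces a Cantor subset $D\subseteq\C$ that is disjoint from $A$, contains each $\overline{C_{j(t,m)}}$ densely, and has each $D_c$ dense in $D$.

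Verification: $D$ is meager in $\C$ by \cref{closmeag}; its comeager color is $c_0$, since $D\setminus B$ lies in $\C\setminus\bigcup_jC_j$ up to the countably many points $x_{cm}$; and its realized and omitted colors agree with those of $(\C,P)$ via the $D_c$, so $D$ has the same $0$-type. Moreover $D$ is an $n$-uniform sum of $T'$: the closed sets $\overline{C_{j(t,m)}}$ densely realize every $t\in T'$, and the meager $F_\sigma$-set $D\cap(\bigcup_{j\in J'}C_j\cup\bigcup_cD_c)$ is compatible with $T'$ by \cref{unioncompat}, since $D\cap C_{j(t,m)}$ embeds into the realization $C_{j(t,m)}$ of $t\in T'$ and the countable $D_c$ contribute no $n$-uniform Cantor subsets. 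The main obstacle is that some realized colors of $(\C,P)$ may only be witnessed inside $C_j$ with $j\notin J'$, which would conflict with the requirement that $D$ avoid these $C_j$; I resolve this by subtracting the chosen points $x_{cm}$ from $A$ so that $D$ may still include these isolated witnesses, and the countability of the witnesses keeps the compatibility argument intact.
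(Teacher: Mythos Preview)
Your approach differs from the paper's: you re-run the construction underlying the sufficiency direction of \cref{compunifprec}~(b) inside $\C$, whereas the paper simply quotes that lemma. The paper's three-line argument is to replace $(\C,P)$ by an $(n+1)$-uniform subinterval (\cref{trivunif}), note via \cref{unifmax} that $P$ is then an $(n+1)$-uniform sum of some $T''$ with $T\subseteq\{t|n:t\in T''\}\subseteq\bigcup_{s\in T''}\ucsub(s)$, and invoke \cref{compunifprec}~(b) with the subset $T'$ and $t|0=\cTh_0(\C,P)$. This sidesteps both the repetition of the \cref{findcantor} machinery and the passage from $\R$ to $\C$.

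Your construction, as written, has a real gap. Your $A=(\bigcup_{j\notin J'}C_j)\setminus\bigcup_{c,m}\{x_{cm}\}$ omits all $C_j$ with $j\in J'$, but for those $j\in J'$ that were \emph{not} selected as some $j(t,m)$ the set $C_j$ lies in neither $A$ nor $B$. \Cref{findcantor} then gives no control over $D\cap C_j$, and nothing prevents such a $C_j$ from containing a clopen piece of $D$. Your claim that $D\cap\bigl(\bigcup_{j\in J'}C_j\cup\bigcup_cD_c\bigr)$ is meager in $D$ is therefore unsupported: \cref{findcantor}~(b) only makes $B$ meager in $D$, and $B$ is strictly smaller. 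The correct choice, exactly as in the proof of \cref{compunifprec}~(b), is to put \emph{all} of $\bigcup_jC_j$ minus the selected pieces $B\cup\bigcup_cD_c$ into $A$ (this is still $F_\sigma$ because each selected piece sits inside its own $C_j$), so that $D$ is forced to avoid every $C_j$ except on the chosen witnesses. A secondary issue is that \cref{findcantor} is proved for $\R$ via \cref{goodnhd}, which uses interval structure; your assertion that it ``transfers to $\C$'' is not obvious, and the clean way around it is to work in $\R$ via $\R_\C$ and \cref{projcantorbij}, which is precisely what the paper's route through \cref{compunifprec} does implicitly.
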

\begin{proof}
  Replacing it by a subinterval, by \cref{trivunif} we may assume $P$ to be $(n+1)$-uniform. \Cref{unifmax} yields that $(\C,P)$ is an $(n+1)$-uniform sum of some set $T''$ with $T\subseteq\setb{t|n}{t\in T''}$. Now \cref{compunifprecb} yields an $n$-uniform sum of $T'\subseteq\bigcup_{s\in T''}\ucsub(s)$ with $0$-type $\cTh_0(\C,P)$.
\end{proof}
\begin{corollary}\label{descucsub}
  The set $\ucsub(t)$ is the image of the Cantor subtypes of $t$ under $f$ of \cref{relabelcol}.
\end{corollary}
\begin{proof}
  By \cref{compcantsub} it suffices to show that $t|n$ lies in this image, that is, is realized by a meager Cantor set. Apply \cref{contunifsub} with $T'=T$.
\end{proof}
\begin{definition}\label{partordcoarse}
  For coarse $n$-types $s,t$, write $s\le t$ if $s$ and $t$ have the same comeager label, the realized labels of $s$ form a subset of the realized labels of $t$ and the Cantor subtypes of $s$ form a subset of the Cantor subtypes of $t$. An $(n+k)$-type $s$ is $k$-\textbf{minimal} if $s$ is least with respect to this order among the satisfiable $t$ above $s|n$, that is, with $t|n=s|n$. We never regard an $n$-type for $n<k$ as $k$-minimal.
\end{definition}
For $n>0$, in $s\le t$ the condition that the realized labels form a subset follows from the condition that the Cantor subtypes form a subset. By \cref{descucsub}, $s\le t$ implies $\ucsub(s)\subseteq\ucsub(t)$.
\begin{lemma}\label{unifleast}
  The $(n+1)$-uniform sum of a set $T$ of Cantor-satisfiable $(n+1)$-types with a consistent $0$-type is the least satisfiable $(n+1)$-type with the given comeager labels and whose Cantor subtypes include $\bigcup_{t\in T}\ucsub(t)$.
\end{lemma}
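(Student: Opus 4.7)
The approach is to verify satisfiability and the lower-bound property separately. Denote by $s$ the coarse $(n+1)$-type of an $(n+1)$-uniform sum $(\R,P)$ of $T$ with the specified $0$-type; this is well-defined by \cref{unifcomp}. Then $s$ is satisfiable by construction, and \cref{compunifprec}(a) shows directly that every element of $\bigcup_{t\in T}\ucsub(t)$ is a realized Cantor set of $s$.

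For minimality, let $s'$ be any satisfiable $(n+1)$-type with $s'|0 = s|0$ whose realized Cantor sets contain $\bigcup_{t\in T}\ucsub(t)$; the plan is to show that each realized Cantor set $u$ of $s$ is also a realized Cantor set of $s'$. First, \cref{unifmax} applied to $s'$ yields a set $T''$ of satisfiable $(n+1)$-types such that $s'$ is the $(n+1)$-uniform sum of $T''$ and $\{t''|n \mid t''\in T''\}$ coincides with the realized Cantor sets of $s'$. The key intermediate observation is that $t''|n\in\ucsub(t'')$ for each $t''\in T''$; this follows from \cref{descucsub} combined with the fact (used inside the proof of that corollary, via \cref{contunifsub}) that $t''|n$ is realized by a meager Cantor subset of any Cantor realization of $t''$. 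Consequently, the realized Cantor sets of $s'$ are contained in $\bigcup_{t''\in T''}\ucsub(t'')$.

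Now apply \cref{compunifprec} to $s$ with the set $T$: each realized Cantor set $u$ of $s$ satisfies condition (a) or (b) of that lemma. In case (a), $u\in\bigcup_{t\in T}\ucsub(t)$, which by the hypothesis on $s'$ is a realized Cantor set of $s'$. In case (b), $u$ is the $n$-type of an $n$-uniform sum of some $T'\subseteq\bigcup_{t\in T}\ucsub(t)$, with $u|0$ meeting the stipulated comeager-color and realized-color constraints relative to $s|0$. Along the chain
\[T'\subseteq\bigcup_{t\in T}\ucsub(t)\subseteq\{\text{realized Cantor sets of } s'\}\subseteq\bigcup_{t''\in T''}\ucsub(t''),\]
and using the equality $s|0=s'|0$, the very same data witnesses condition (b) of \cref{compunifprec} for $s'$ with $T''$ (taking $T'''=T'$); hence $u$ is a realized Cantor set of $s'$.

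The principal obstacle is purely bookkeeping: keeping track of which $0$-type constraints, which applications of $\ucsub$, and which instances of uniform sum refer to $s$, $s'$, $u$ and which refer to $T$, $T'$, $T''$. The argument itself is a short two-step diagram chase through \cref{compunifprec}, with \cref{unifmax} and the inclusion $t''|n\in\ucsub(t'')$ providing the bridge from $s'$ back to the form required for case (b).
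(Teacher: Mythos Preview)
Your argument is correct and follows essentially the same route as the paper: express the competing type $s'$ as a uniform sum via \cref{unifmax}, then use the characterization in \cref{compunifprec} to compare, with the paper simply asserting the evident $\subseteq$-isotonicity of that description in $\bigcup_{t\in T}\ucsub(t)$ where you spell it out case by case. One simplification: the inclusion $t''|n\in\ucsub(t'')$ is already part of the statement of \cref{compcantsub}, so invoking \cref{descucsub} and \cref{contunifsub} is unnecessary.
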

\begin{proof}
  The sum is satisfiable by \cref{unifsumsat}.\par
  Let $s$ be a uniform sum of $T$ and $t$ be another such satisfiable $(n+1)$-type. \Cref{unifmax} yields that $t$ is an $(n+1)$-uniform sum of a set containing realizations of all elements of $\bigcup_{t'\in T}\ucsub(t')$. Then $s\le t$ follows as the description in \cref{compunifprec} shows that the set of Cantor subtypes of a uniform sum of $T$ is $\subseteq$-isotone in $\bigcup_{t'\in T}\ucsub(t')$.
\end{proof}
\begin{lemma}\label{unifleast'}
  The $(n+1)$-uniform sum of a set $T$ of $1$-minimal Cantor-satisfiable $(n+1)$-types with a consistent $0$-type is the least satisfiable $(n+1)$-type with the given comeager label and whose Cantor subtypes include $\set{t|n\mid t\in T}$.
\end{lemma}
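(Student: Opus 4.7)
The plan is to derive the lemma from Lemma~\ref{unifleast} by showing that, under the $1$-minimality hypothesis on each $t\in T$, the condition ``the realized Cantor sets include $\set{t|n\mid t\in T}$'' and the stronger condition ``the realized Cantor sets include $\bigcup_{t\in T}\ucsub(t)$'' pick out the same satisfiable $(n+1)$-types. One inclusion is immediate from \cref{descucsub}: each $t\in T$ is Cantor-satisfiable, so \cref{contunifsub} applied with $T'=T$ yields a meager Cantor realization of $t|n$ inside a realization of $t$, forcing $t|n\in\ucsub(t)$; hence $\set{t|n\mid t\in T}\subseteq\bigcup_{t\in T}\ucsub(t)$, and by \cref{compunifprec} the uniform sum of $T$ has both among its realized Cantor sets.

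For the converse, suppose $u$ is any satisfiable $(n+1)$-type with the prescribed $0$-type whose realized Cantor sets contain $\set{t|n\mid t\in T}$. Fix $t\in T$: then $t|n$ is realized as an $n$-uniform Cantor subset of some realization of $u$, and \cref{trivunif} lets us pass to an $(n+1)$-uniform subinterval $C$ of that subset without changing the $n$-type, giving a satisfiable $(n+1)$-type $s=\cTh_{n+1}(C,\cdot)$ with $s|n=t|n$. The $1$-minimality of $t$ then forces $t\le s$, so by \cref{descucsub} we have $\ucsub(t)\subseteq\ucsub(s)$. Every element of $\ucsub(s)$ is the $f$-image of the $n$-type of an $n$-uniform meager Cantor subset $D\subseteq C$; but $D$ is equally a Cantor subset of the ambient realization of $u$, and $f$ (from \cref{relabelcol}) is precisely the relabeling between these two views of $D$'s induced coloring, so $\ucsub(s)$ is contained in the realized Cantor sets of $u$. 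Taking the union over $t\in T$ shows $\bigcup_{t\in T}\ucsub(t)\subseteq$ realized Cantor sets of $u$, and Lemma~\ref{unifleast} yields $u\ge$ uniform sum of $T$.

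The main subtlety is the color bookkeeping through $f$: $\ucsub(s)$ naturally lives in a type-space whose palette also records the jumps of $C$ (and hence of $D$ as a subset of $C$), whereas the realized Cantor sets of $u$ live in a palette recording only the jumps of $D$ as a subset of $u$'s realization. The map $f$ of \cref{relabelcol} is constructed exactly to translate between these two pictures, so once the identification is made the reduction to Lemma~\ref{unifleast} goes through without further incident.
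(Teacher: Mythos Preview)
Your proof is correct and follows the same route as the paper: reduce to \cref{unifleast} by showing that for $1$-minimal $t$ a satisfiable $(n+1)$-type has $\ucsub(t)$ among its realized Cantor sets if and only if it has $t|n$ there, via passing to an $(n+1)$-uniform Cantor subset realizing $t|n$ and invoking minimality. Your discussion of $f$ is superfluous and slightly muddled: by the \emph{definition} of $\ucsub$ in \cref{compcantsub} (as opposed to its computational description via \cref{descucsub}), $\ucsub(s)$ already lives in $\fcTh_n(m+m^2)$ and consists exactly of the types $\cTh_n(D,P)$ for $n$-uniform Cantor $D\subseteq C$, which are immediately realized Cantor sets of $u$ since $D\subseteq\R$---no relabeling is needed.
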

\begin{proof}
  This follows from \cref{unifleast} since for an $1$-minimal $(n+1)$-type $t$, an $(n+1)$-type $s$ includes $\ucsub(t)$ if and only if $s$ contains $t|n$. Indeed, if $s$ contains $t|n$, then each realization of $s$ includes an $(n+1)$-uniform realization $t'$ of $t|n$, and so includes $\ucsub(t')\supseteq\ucsub(t)$.
\end{proof}
\begin{definition}
  \begin{parts}
  \item An $n$-type \textbf{construction} of \textbf{rank} at most $m\in\N$ is defined recursively as a pair of a satisfiable $0$-type $t$ and a set $T=\set{t_1,\dots,t_k}$ of $n$-type constructions of rank less than $m$ (for $m=0$, this permits only $T=\emptyset$). We denote the construction by $\sum^tT$ or $\sum_i^tt_i$.
  \item The value of an $n$-type construction $\sum^tT$ is the $n$-type $s$ with $s|0=t$ that is a uniform sum of the values of the elements of $T$. For this definition we recursively require that the values of the elements of $T$ are Cantor-satisfiable and $1$-minimal and that $t$ is consistent with $T$. If $\sum^tT$ has value $t'$, we also say that $\sum^tT$ is a construction of $t'$.
  \item A type $t$ is \textbf{constructible} of rank at most $m$ if $t$ has a construction of rank at most $m$. The type $t$ is constructible if $t$ is constructible of some rank. The rank of $t$, denoted by $\rk t$, is the minimal rank of a construction of $t$.
  \end{parts}
\end{definition}
For example, each satisfiable $0$-type $t$ is constructible. Indeed, $\sum^t\emptyset$ is a construction of $t$.
\begin{definition}
  The \textbf{canonical realization} of a construction $\sum^tT$ is defined recursively as the iso-uniform sum of the canonical realizations of all $s\in T$ and the isomorphism classes of points of all realized labels of $t$ with the comeager label of $t$. A labeling $P$ of $\C$ is a canonical realization of a construction if $\R_\C(P)$ is a canonical realization of this construction.
\end{definition}
Different constructions of a type may lead to non-isomorphic canonical realizations.
\begin{proposition}\label{satisofconstr}
  Constructible types are satisfiable.
\end{proposition}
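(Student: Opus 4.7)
The plan is to prove by induction on the rank of the construction that the canonical realization of any $n$-type construction $\sigma$ is in fact a realization of its value $v(\sigma)$. The base case (rank $0$, forcing $T=\emptyset$) amounts to observing that the canonical realization — an iso-uniform sum of colored points, one per realized color of $t$, with comeager color $c_0^t$ — exists by the iso-uniform sum existence proposition, has $0$-type $t$, and is vacuously an $n$-uniform sum of $\emptyset$ witnessed by taking $M$ to be the set of colored points; by \cref{unifcomp} the unique $n$-uniform sum of $\emptyset$ with $0$-type $t$ is then realized.

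For the inductive step, consider $\sigma=\sum^tT$. By the induction hypothesis, each $s\in T$ has a canonical realization $P_s$ that realizes $v(s)$; since $P_s$ is itself an iso-uniform sum it is iso-uniform, so the iso-uniform sum $P$ of $\{P_s:s\in T\}$ together with one colored point of each realized color of $t$, taken with comeager color $c_0^t$, exists. I would then verify three properties of $P$: first, the $0$-type of $P$ equals $t$, since the comeager color is $c_0^t$ by construction, each realized color of $t$ appears via an added point, and no further colors appear because the recursive requirement in the definition of construction forces all realized colors of each $P_s$ to be realized colors of $t$.

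Second, $P$ is an $n$-uniform sum of $\{v(s):s\in T\}$. Every nonempty open subset contains a copy of $\overline{P_s}$ for each $s\in T$; deleting the two added endpoints (which carry color $c_0^t$ but whose color is irrelevant for the interior) yields a Cantor subset whose induced coloring is $P_s$, realizing $v(s)$ by the induction hypothesis. Let $M$ be the union of all these copies together with all added colored points; then $M$ is a meager $F_\sigma$-set with $\R\setminus M$ monochromatic of color $c_0^t$, and by \cref{unioncompat} $M$ is compatible with $\{v(s):s\in T\}$ since each copy of $\overline{P_s}$ lies inside a realization of $v(s)$ and points are compatible with any set of types.

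Third, having established that $P$ is an $n$-uniform sum of $\{v(s):s\in T\}$ with $0$-type $t$, \cref{unifcomp} gives that its $n$-type is uniquely determined by this data, hence coincides with $v(\sigma)$. The main obstacle I anticipate is the second step — specifically, making sure that the Cantor sets obtained by stripping the $c_0^t$-colored endpoints off the embedded copies of $\overline{P_s}$ genuinely realize $v(s)$ and not some perturbation thereof, and that the compatibility of $M$ with $\{v(s):s\in T\}$ is correctly mediated by \cref{unioncompat}. Everything else reduces to bookkeeping about iso-uniform sums and the uniqueness statement already available in \cref{unifcomp}.
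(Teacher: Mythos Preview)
Your proposal is correct and follows exactly the paper's approach: the paper's proof is the single line ``By induction on a fixed construction, the canonical realization is a realization,'' and your argument is a faithful unpacking of that induction via the definition of iso-uniform sum together with \cref{unioncompat} and \cref{unifcomp}. Your anticipated obstacle about stripping endpoints is a non-issue, since the definition of iso-uniform sum already guarantees that every nonempty open contains a realization of each $P_s$ itself (not merely of $\overline{P_s}$), so the induction hypothesis applies directly.
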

\begin{proof}
  This follows by induction on a fixed construction using \cref{unifsumsat}. Explicitly, the canonical realization is a realization.
\end{proof}
\begin{proposition}\label{pseudocount}
  For a label $c$ of a constructible $n$-type $t$, the following conditions are equivalent:
  \begin{tfae}
  \item $c$ is meager and if $n>0$, then $c$ is meager in all Cantor subtypes of $t$.
  \item $c$ is meager and if $n>0$, then, recursively on $n$, $c$ satisfies (ii) in all Cantor subtypes of $t$.
  \item $c$ is countable in some realization of $t$.
  \item If $n=0$, then $c$ is countable in the canonical realization of $\sum^t\emptyset$. If $n>0$, then $c$ is countable in the canonical realizations of all constructions.
  \end{tfae}
\end{proposition}
\begin{proof}
  If (i) fails, then for each realization, $c$ is comeager in some Cantor set, thus uncountable, and so (iii) fails. Since clearly (iv)$\implies$(iii), it suffices to show (i)$\implies$(ii)$\implies$(iv).\par
  We show $\lnot$(ii)$\implies\lnot$(i) by induction on $n$. The case $n=0$ holds by definition. If $c$ is not pseudo-countable in some Cantor subtype $s$, then by the induction hypothesis $s$ contains a Cantor subtype $s'$ with $c$ comeager in $s'$. Then there is an $(n-1)$-uniform realization of $s'$, which witnesses that $c$ is comeager in some Cantor subtype of $t$.\par
  For (ii)$\implies$(iv), we use induction on the given construction. Since $c$ is countable in all summands by (ii) (using $n>0$) and the induction hypothesis, $c$ is countable in all $C_i$ in the definition of iso-uniform sums. It is countable in $\R\setminus\bigcup_iC_i$ as $c$ is not the comeager label.
\end{proof}
A label is \textbf{pseudo-countable} if the conditions of \cref{pseudocount} hold. Assuming determinacy for $\mc B$, a pseudo-countable label is countable in all realizations by the perfect set property. This fails for $\mc B=\sigma(\mathbf\Sigma^1_1)$ in the constructible universe.
\begin{proposition}\label{cantorsatisconstr}
  A constructible type is Cantor-satisfiable if and only if there is a realized label of jumps and all labels of jumps are pseudo-countable.
\end{proposition}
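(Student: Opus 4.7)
The plan is to use the quotient map $\pi\colon\C\to\R$ of \cref{projcantorbij}, which identifies the endpoints of each of the countably many jumps of $\C$; its image $S_0\coloneqq\pi(\{\text{jump endpoints of }\C\})$ is a countable dense subset of $\R$. By the definition of $\R_\C$, the coloring $\R_\C(P)$ assigns $\pi(x)$ the value $P(x)\in I$ whenever $x$ is not a jump endpoint, and the value $(P(a),P(b))\in I\times I$ whenever $\pi^{-1}(\pi(x))=\{a,b\}$ is a jump of $\C$. Consequently the set of points of $\R$ on which $\R_\C(P)$ takes a jump color is always contained in $S_0$.

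For the forward direction, suppose $t$ is Cantor-satisfied by some $P\colon\C\to I$. Each jump color of $t$ is then realized only on a subset of the countable set $S_0$, so every jump color is countable in this particular realization of $t$; implication (iii)$\Rightarrow$(i) of the pseudo-countable lemma yields pseudo-countability of every jump color. Moreover $\C$ contains jumps (for example $(x01^\infty,x10^\infty)$ for any nonempty finite binary word $x$), so at least one jump color is realized.

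For the reverse direction, fix a construction of $t$ and let $P^*\colon\R\to I\amalg J$ be its canonical realization. Condition (iv) of the pseudo-countable lemma, applied to each jump color, yields that the set $S\subseteq\R$ of points carrying a jump color under $P^*$ is countable. Since $P^*$ is an iso-uniform sum in whose summand list a colored point of every realized color appears, every nonempty open interval meets $S$, so $S$ is dense in $\R$. By Cantor's theorem on countable dense linear orders (cf.\ the proof of \cref{irratuniq}) there is an order automorphism $\phi\colon\R\to\R$ with $\phi(S_0)=S$. The coloring $Q\coloneqq P^*\circ\phi$ still realizes $t$ and now has $S_0$ as its set of jump-colored points. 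Define $P\colon\C\to I$ by $P(x)=Q(\pi(x))\in I$ for non-jump $x$, and, for each jump $(a,b)$ of $\C$ with $\pi(a)=\pi(b)=s_0$ and $Q(s_0)=(c_1,c_2)\in I\times I$, by $P(a)=c_1$ and $P(b)=c_2$. Unwinding the definition of $\R_\C$ yields $\R_\C(P)=Q$, so $P$ Cantor-satisfies $t$.

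The main obstacle is verifying the two point-set facts about $S$ inside the canonical realization: countability uses condition (iv) of pseudo-countability specialized to canonical realizations, and density follows from the definition of iso-uniform sum once one observes that every realized color, including every realized jump color, appears as one of its colored-point summands. Once these are in hand, the automorphism matching $S$ to $S_0$ and the pull-back along $\pi$ are purely formal.
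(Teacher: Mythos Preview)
Your proof is correct and follows essentially the same approach as the paper's. The paper's two-sentence sufficiency argument (``consider a realization with all jump colors countable, then replace each jump-colored point by two points'') is precisely your construction, just stated informally: your use of condition (iv) and the canonical realization makes explicit why a single realization with all jump colors simultaneously countable exists, and your Cantor-automorphism matching $S$ to $S_0$ followed by pulling back along $\pi$ is exactly what ``replacing each point by two points'' means once one checks that the resulting order is still isomorphic to~$\C$ (which needs density of $S$, as you note). The necessity direction is likewise the same, with your observation that a nonempty jump color in $\R_\C(P)$ is automatically realized (since satisfying $t$ forces every color to be dense or empty) filling in a step the paper takes for granted.
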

\begin{proof}
  Necessity follows as all labels of jumps are countable in all realizations. For sufficiency consider a realization of $t$ in $\R$ with all labels of jumps countable. Replacing each point of such a label by two points of the corresponding labels yields a Cantor set realizing $t$.
\end{proof}
In particular, Cantor-satisfiability of a constructible $n$-type $t$ for $n>0$ only depends on $t|1$.
\begin{proposition}
  \begin{parts}
  \item All $(n+1)$-uniform sums of $2$-minimal Cantor-satisfiable constructible $(n+1)$-types are $1$-minimal.
  \item For every constructible $n$-type $t$, there is a unique $1$-minimal $(n+1)$-type $t'$ above $t$. Moreover, $t'$ is constructible, $t\mapsto t'$ is computable, and $t'$ is Cantor-satisfiable if and only if $t$ is Cantor-satisfiable.
  \item If a $\mc B$-labeling of $\R$ has a Cantor subset with constructible $n$-type $t$, then it has a Cantor subset realizing $t'$ of (b).\label{unifmincont}
  \end{parts}
\end{proposition}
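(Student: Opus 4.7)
The plan is to dispatch (a) directly via \cref{unifleast} and the $\ucsub$ machinery, to prove (b) by induction on $n$ with (a) controlling $1$-minimality, and to derive (c) by combining (b) with \cref{isunifsum}, \cref{contunifsub}, and (a).

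For (a), let $T$ be a set of $2$-minimal Cantor-satisfiable constructible $(n+1)$-types and let $s$ be their $(n+1)$-uniform sum. Given a satisfiable $(n+1)$-type $s'$ with $s'|n=s|n$, \cref{unifleast} reduces the desired inequality $s\le s'$ to showing that $\bigcup_{t\in T}\ucsub(t)$ is contained in the realized Cantor sets of $s'$. Fix $t\in T$. Since $t|n\in\ucsub(t)$ is a realized Cantor set of $s$, the $(n-1)$-type $t|(n-1)$ is realized in $s|n=s'|n$. Any realization of $s'$ thus contains an $(n-1)$-uniform Cantor subset $C$ of $(n-1)$-type $t|(n-1)$; applying \cref{somewhereunif} inside $C$ yields an $(n+1)$-uniform sub-Cantor $C'$ with the same $(n-1)$-type by \cref{trivunif}. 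Its $(n+1)$-type $u$ satisfies $u|(n-1)=t|(n-1)$, so $2$-minimality of $t$ gives $t\le u$ and hence $\ucsub(t)\subseteq\ucsub(u)$; by \cref{compcantsub} the members of $\ucsub(u)$ are realized as Cantor subsets of $C'$, hence of the ambient realization of $s'$.

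I prove (b) by induction on $n$, jointly with (c). For $n=0$, \cref{unifleast} with $T=\emptyset$ identifies the value of $\sum^t\emptyset$ as the $\le$-least satisfiable $1$-type above $t$, uniquely $1$-minimal. For $n>0$, let $R=\{r_1,\dots,r_m\}$ be the realized Cantor sets of $t$, each a constructible $(n-1)$-type. The inductive hypothesis gives each $r_i$ a unique $1$-minimal $n$-type $r_i^*$ above it, constructible via some $c_i$. Reinterpreting each $c_i$ as an $(n+1)$-type construction $\hat c_i$ by carrying out the uniform sums one level higher yields a value $\hat r_i$ with $\hat r_i|n=r_i^*$. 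Letting $\check r$ denote a hypothetical $2$-minimal $(n+1)$-type above $r_i$, the inductive $1$-minimality of $r_i^*$ forces $\check r|n=r_i^*$ (any strict inequality $\check r|n>r_i^*$ would produce realized Cantor sets of $\check r$ not among those of $\hat r_i$, contradicting $\check r\le\hat r_i$), and then $\check r=\hat r_i$ by $1$-minimality of $\hat r_i$ above $r_i^*$. Setting $t':=$ value of $\sum^{t|0}\{\hat c_1,\dots,\hat c_m\}$, (a) makes $t'$ $1$-minimal above its $n$-restriction, and \cref{unifleast'} combined with $\hat r_i|(n-1)=r_i$ verifies $t'|n=t$. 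Cantor-satisfiability reduces via \cref{cantorsatisconstr} to a jump-color condition depending only on $t|0$ and is thus preserved; uniqueness of $t'$ follows from antisymmetry of $\le$.

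For (c), let $P$ be a coloring of $\R$ containing a Cantor subset $D$ of $n$-type $t$. Via \cref{somewhereunif} pass to a sufficiently uniform sub-Cantor of $D$ (still of $n$-type $t$ by \cref{trivunif}) whose $(n+1)$-type $s$ satisfies $s|n=t$, hence $t'\le s$ by (b). Since $t'$ is the $(n+1)$-uniform sum of $\{\hat r_1,\dots,\hat r_m\}$, each $\hat r_i$ is realized in any realization of $t'$ and hence, via $t'\le s$, in the sub-Cantor. The $(n+1)$-level analogue of \cref{isunifsum} exhibits the sub-Cantor as an $(n+1)$-uniform sum of its realized $(n+1)$-uniform Cantor subsets, and \cref{contunifsub} extracts a meager sub-Cantor that is an $(n+1)$-uniform sum of precisely $\{\hat r_1,\dots,\hat r_m\}$; by (a) its $(n+1)$-type is exactly $t'$. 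The main obstacle will be the verification in (b) that $\hat r_i$ is $2$-minimal above $r_i$ and that the reinterpreted construction indeed satisfies $\hat r_i|n=r_i^*$, both of which require careful bookkeeping of realized Cantor sets through uniform sums via \cref{compunifprec}.
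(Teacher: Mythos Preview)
Your argument for (a) is essentially correct and matches the paper: you inline what the paper phrases as ``(c) for $n-1$ implies (a) for $n$'', passing to an $(n+1)$-uniform sub-Cantor and invoking $2$-minimality. The use of \cref{unifleast} rather than \cref{unifleast'} is a harmless variation.

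The genuine gap is in (b). You work with the realized Cantor sets $r_1,\dots,r_m$ of $t$ and assert that each is a constructible $(n-1)$-type. But at this point in the paper the equivalence of satisfiable and constructible (\cref{satisconstr}) has not yet been proved; it is a \emph{consequence} of the present proposition. The realized Cantor sets of a constructible type are described by \cref{compunifprec} as elements of $\bigcup_i\ucsub(s_i)$ together with further uniform sums, and there is no a~priori reason these are constructible. The paper avoids this entirely by taking the $s_i$ to be the summands appearing in a fixed construction of $t$: these are $1$-minimal constructible $n$-types of strictly smaller rank \emph{by definition}, so one can run an inner induction on rank to produce $2$-minimal $(n+1)$-types $s_i'$ above them and then apply (a).

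Your ``reinterpretation'' of $c_i$ as an $(n+1)$-type construction $\hat c_i$ runs into the same issue: for $\hat c_i$ to be a valid construction its summands must be $1$-minimal $(n+1)$-types, which is exactly what you are trying to establish. This forces an induction on rank that you never set up, and your own closing remark (``the main obstacle will be\dots'') correctly identifies this. Similarly in (c), the step ``each $\hat r_i$ is realized \dots\ via $t'\le s$, in the sub-Cantor'' does not follow: $t'\le s$ only gives that $\hat r_i|n$ is among the realized Cantor sets of $s$, not that the full $(n+1)$-type $\hat r_i$ occurs. The paper handles this with a second induction on rank inside (c), using (c) at level $n-1$ to upgrade $s_i|(n-1)$ to $s_i$ and then the rank hypothesis to upgrade $s_i$ to $s_i'$. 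Restructure your argument around the summands of a construction of $t$ and the nested induction on rank, and the pieces you have will assemble correctly.
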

\begin{proof}
  We use simultaneous induction on $n$.\par
  (c) for $n-1$ implies (a) for $n$ and (a) holds for $n=0$. Indeed, let $t_1,\dots,t_m$ be $2$-minimal $(n+1)$-types (note that $m=0$ if $n=0$), and let $s$ be a satisfiable $(n+1)$-type above the uniform sum of $t_1|n,\dots,t_m|n$. We need to show that $s$ includes the uniform sum of $t_1,\dots,t_m$. By \cref{unifleast'} it suffices to show that $s$ contains $t_1|n,\dots,t_m|n$. This follows from (c) as $s|n$, being the uniform sum of $t_1|n,\dots,t_m|n$, contains $t_1|(n-1),\dots,t_m|(n-1)$.\par
  (a) for $n$ implies (b) for $n$. Indeed, let $t$ be constructible, say an $n$-uniform sum of $1$-minimal Cantor-satisfiable constructible types $s_1,\dots,s_m$ of smaller rank. By induction on the rank, there are $2$-minimal Cantor-satisfiable constructible $s_1',\dots,s_m'$ above $s_1,\dots,s_m$. Their uniform sum $t'$ is $1$-minimal by (a), is constructible and lies above $t$. Uniqueness holds as $\le$ is a partial order on coarse types. By induction on the rank, $s_i'$ has the same pseudo-countable labels as $s_i$. Since a label in a uniform sum is pseudo-countable if and only if it is pseudo-countable in all summands and meager, $t'$ has the same pseudo-countable labels as $t$. The claim about Cantor-satisfiability follows from \cref{cantorsatisconstr}.\par
  (b) for $n$ implies (c) for $n$. We use induction on $\rk t$. Let $t$ be an $n$-uniform sum of $1$-minimal types $s_1,\dots,s_m$ of smaller rank and $P$ be a realization. Then $P$ contains $s_1|(n-1),\dots,s_m|(n-1)$. By the induction hypothesis on $n$, $P$ contains $s_1,\dots,s_m$ (note that this also works for $n=0$ as then $m=0$). By the induction hypothesis on the rank, $P$ contains the $2$-minimal $(n+1)$-types $s_1',\dots,s_m'$ above $s_1,\dots,s_m$. Replacing it by a subinterval, we may assume that $P$ is $(n+2)$-uniform. \Cref{isunifsum} yields that $P$ is an $(n+1)$-uniform sum of $s_1',\dots,s_m'$ and some other types and thus by \cref{contunifsub} includes an $(n+1)$-uniform sum $t'$ of $s_1',\dots,s_m'$. This $t'$ is the $1$-minimal $(n+1)$-type above $t$ as defined in the proof of (b).
\end{proof}
The proof shows that the $1$-minimal type $t'$ above $t$ has the syntactically same construction as $t$ (just regarded as $(n+1)$-type construction, not as $n$-type construction). In particular, they have equal canonical realizations.
\begin{proposition}\label{satisconstr}
  A coarse type is satisfiable if and only if it is constructible. In fact, each satisfiable type is the uniform sum of the $1$-minimal types above its Cantor subtypes.
\end{proposition}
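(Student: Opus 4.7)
The plan is to prove both claims simultaneously by induction on the level $n$ of the coarse type; the converse direction (constructible implies satisfiable) is already \cref{satisofconstr}. The base case $n=0$ is immediate: each satisfiable $0$-type $t$ arises as the value of $\sum^t\emptyset$, and since a $0$-type has no realized Cantor sets, this matches the ``in fact'' clause.

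For the inductive step, fix a satisfiable $n$-type $s$ with $n>0$ and let $r_1,\dots,r_k$ be its realized Cantor sets, which are $(n-1)$-types. Each $r_i$ is the $(n-1)$-type of an $(n-1)$-uniform Cantor subset of some realization of $s$, hence Cantor-satisfiable; by the inductive hypothesis each $r_i$ is constructible, so the preceding proposition provides a unique $1$-minimal $n$-type $r_i'$ above $r_i$, and these $r_i'$ are again constructible and Cantor-satisfiable. I then claim that $s=\sum^{s|0}\{r_1',\dots,r_k'\}$. Applying \cref{unifleast'} (with $n-1$ in place of $n$) to $T=\{r_1',\dots,r_k'\}$, whose members are $1$-minimal Cantor-satisfiable $n$-types with $\{r_i'|(n-1)\}=\{r_1,\dots,r_k\}$, the right side equals the least satisfiable $n$-type $u$ with $u|0=s|0$ whose realized Cantor sets contain $\{r_1,\dots,r_k\}$. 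Since $s$ meets both conditions, $u\le s$; so the realized Cantor sets of $u$ are included in $\{r_1,\dots,r_k\}$ while also containing this set, and combined with $u|0=s|0$ this forces $u=s$.

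It remains to verify that $\sum^{s|0}\{r_1',\dots,r_k'\}$ is a legitimate construction: the $r_i'$ are $1$-minimal by choice, Cantor-satisfiable as above, and each has some constructive rank (so our construction has finite rank as well), and the realized colors of each $r_i'$ coincide with those of $r_i|0$. Choosing an $(n+1)$-uniform realization of $s$ via \cref{trivunif}, every nonempty open subset contains a realization of each $r_i$, so any color dense in $r_i$ is dense in $\R$; thus the realized colors of $r_i'$ lie among the realized colors of $s|0$. The mildly subtle step is recognizing that the correct summands are precisely the $1$-minimal lifts $r_i'$ of the realized Cantor sets of $s$; once this observation is in place, \cref{unifleast'} together with the preceding proposition does essentially all the remaining work.
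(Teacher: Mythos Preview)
Your proof is correct and follows essentially the same route as the paper's: both arguments invoke \cref{unifleast'} on the set of $1$-minimal lifts of the realized Cantor sets to identify the resulting uniform sum as the least satisfiable type $\ge s$, whence it equals $s$. You are more explicit than the paper about verifying that the summands form a legitimate construction (Cantor-satisfiability, $1$-minimality, the realized-colors condition), which is fine; the appeal to \cref{trivunif} for an $(n+1)$-uniform realization is unnecessary, since the defining property of an $n$-type already guarantees that every open contains a realization of each $r_i$, but this is harmless.
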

\begin{proof}
  Sufficiency follows from \cref{satisofconstr}. For necessity, let $t$ be a satisfiable $(n+1)$-type. Then by \cref{unifleast'} the uniform sum $s$ of the $1$-minimal $(n+1)$-types above the Cantor $n$-subtypes of $t$ is the least satisfiable type with $s\ge t$. Thus $t=s$, which is constructible as the Cantor subtypes of $t$ are constructible by induction.
\end{proof}
\begin{corollary}\label{coarsecomput}
  The set of satisfiable coarse $n$-types is computable.
\end{corollary}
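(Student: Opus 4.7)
The plan is to reduce via \cref{satisconstr} to enumerating constructible coarse $n$-types, then enumerate these by rank. Since $\fcTh_n(m)$ is finite for every $n$ and number of colors $m$, the enumeration must stabilize after finitely many rank increments, yielding a terminating procedure.

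Concretely I would initialize $V_0$ as the set of values of the rank-zero constructions $\sum^t \emptyset$, one for each satisfiable $0$-type $t$; each such value is computable as the empty-summand instance of \cref{unifcomp}. To pass from $V_m$ to $V_{m+1}$, I would loop over every pair $(t, T)$ with $t$ a satisfiable $0$-type and $T$ a finite subset of those $v \in V_m$ that are Cantor-satisfiable, $1$-minimal, and whose realized colors all appear among the realized colors of $t$; for each admissible pair, \cref{unifcomp} delivers the value of $\sum^t T$. The chain $V_0 \subseteq V_1 \subseteq \dots$ lies in the finite set $\fcTh_n(m)$, so as soon as $V_{m+1} = V_m$ one stops, and by a straightforward induction on rank $V_m$ at this point coincides with the full set of constructible $n$-types. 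By \cref{satisconstr} this is precisely the set of satisfiable coarse $n$-types.

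What needs verification is decidability of the filtering conditions on candidate summands. Cantor-satisfiability of a constructible type is handled by \cref{cantorsatisconstr}: it reduces to the existence of a realized jump color together with pseudo-countability of every jump color, and pseudo-countability is in turn decidable by recursively descending through realized Cantor sets using clause (i) of its definition. The $1$-minimality check uses part (b) of the proposition preceding \cref{satisconstr}, which exhibits the unique and computable $1$-minimal $n$-type above any constructible $(n-1)$-type; a candidate summand is $1$-minimal precisely when it equals this canonical representative.

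I do not foresee a serious obstacle. The corollary is essentially the assembly of the computability content of \cref{unifcomp} with the structural content of \cref{satisconstr}, together with the auxiliary decidability provided by \cref{cantorsatisconstr} and the $1$-minimality proposition. The only care required is the inductive bookkeeping to confirm that $V_m$ truly captures all constructions of rank at most $m$, which follows directly from the recursive definition of construction.
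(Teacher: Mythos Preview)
Your proposal is correct and matches the paper's intended argument: the corollary is stated without proof, being immediate from \cref{satisconstr} together with the computability of uniform sums (\cref{unifcomp}) and of the auxiliary predicates (\cref{cantorsatisconstr} and part (b) of the preceding proposition), exactly as you outline. The only detail worth making explicit is that the $1$-minimality check via part (b) takes a \emph{construction} of $v|(n-1)$ as input, so your rank iteration for $n$-types should be nested inside an outer recursion on $n$ that first enumerates constructible $(n-1)$-types together with witnessing constructions; this is routine and does not affect the argument.
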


\section{Uniform refinements}\label{secdecproofb}
From now on, we assume that all members of $\mc B_{\overline\C}$ are determined. The aim of this section is to show the following result.
\begin{proposition}\label{computh}
  \begin{parts}
  \item The maps $\cTh_{m+1}(\R,P)\mapsto\urefincant^{k^-}_{k_n}(\cTh_m)(\R,P)$ are well-defined in $P$ and computable uniformly in $k$ and $m$ when $k^-$ is large enough in terms of $m$.\label{computh2}
  \item There is $p$, computable from $k,m$, such that the maps $\cTh_p(\R,P)\mapsto\urefin^{k^-}_{k_n}(\cTh_m)(\R,P)$ are well-defined in $P$ and computable uniformly in $k$ and $m$ when $k^-$ is large enough in terms of $m$.\label{computh1}
  \end{parts}
\end{proposition}
We first deduce \cref{mainres} from \cref{computh}. For (a), by \cref{shelahres} it suffices to compute $\uTh$, that is, compute recursively $\urefincant$ and $\urefin$. Since increasing $k^-$ does not change their value by \cref{somewhereunif}, we may apply \cref{computh}, using computability of $\cTh$ by \cref{coarsecomput}.\par
It remains to show (b). Since our construction does not depend on the Boolean algebra, all sufficiently stable Boolean algebras share the same theory. In fact, if $P$ is a labeling for the smaller Boolean algebra, then all refinements constructed in the definition of $E$ in the proof of \cref{shelahres} and iso-uniform sums required to canonically realize the $\cTh_p$, as well as the refinements constructed in \cref{secrefin}, can be chosen in the smaller Boolean algebra, and the labeling restricted to a Cantor subset lies in the smaller Boolean algebra.
\begin{remark}
  Since S1S is reducible to the Borel monadic theory of order, any decision procedure requires iterated exponential time \cite{s1srunningtime}. The reduction of $\Th$ to $\uTh$ in \cref{shelahres} and the computation of $\cTh$ evidently only require this running time. However, the map of \cref{computh1} sending $(k,m)$ to $p$ requires us to compute higher levels of $\cTh$, thus increasing the running time of our procedure (which remains primitive recursive).\par
  The author does not expect this increase to be necessary. It even seems plausible that deciding the fragment of the theory expressed by $\uTh$ is possible in less than iterated exponential time and only the procedure of \cref{shelahres} is computationally expensive.
\end{remark}

\subsection{Refined Cantor subsets}\label{secrefincant}
We first discuss a general approach to both parts of \cref{computh}. To obtain $t\in\urefincant_k^f(\cTh_n)(\R,P)$, respectively $t\in\urefin_k^f(\cTh_n)(\R,P)$, there are a few necessary conditions. First, $t$ must be satisfiable for $\urefin$ and Cantor-satisfiable for $\urefincant$. Second, observe that the map of coarsening types along $f$, namely $\cTh_n(\R,Q)\mapsto\cTh_n(\R,f\circ Q)$, is well-defined and computable by an easy recursion on the definition of $\cTh_n$. Then, coarsening the type $t$ along $f$ must yield $\cTh_{n-1}(\R,P)$ for $\urefin$ and a Cantor $(n-1)$-subtype for $\urefincant$.\par
One might try to show that these conditions suffice. While this succeeds for \cref{computh2}, as we argue now, it fails for \cref{computh1}, and we will later phrase the obstructions as winning strategies of Separator in separation games.
\begin{lemma}\label{contcanon}
  Let $S$ be a construction of a coarse type $t$. Every Cantor set realizing $t$ has a Cantor subset that is a canonical realization of $S$.
\end{lemma}
\begin{proof}
  We use induction on $S$. Write $S=\sum^{t|0}\mc S$. Let $(\C,P)$ be a realization of $S$.\par
  The complement of the comeager label is included in a meager $F_\sigma$ set $M$. Increasing $M$, by the induction hypothesis we may assume that for every nonempty open $U$, the restriction $M\cap U$ includes canonical realizations of all $S'\in\mc S$. By \cref{refincantordense}, write $M=\bigcup_jC_j$ with pairwise disjoint closed $C_j$ such that for each nonempty open $U$ and $S'\in\mc S$, some $C_j\cap U$ includes a canonical realization of $S'$.\par
  We now proceed as in \cref{compunifprec}. For all $S'\in\mc S$ or realized labels $c$ of $t|0$ or components $c'$ of realized labels of jumps, and nonempty basic open $U_m\subseteq\R$, choose canonical realizations $E_{S'm}\subseteq U_m$ of $S'$ and points $x_{cm}\in U_m,y_{c'm}\in U_m$ of labels $c,c'$ such that all $E_{S'm},\set{x_{cm}},\set{y_{c'm}}$ are included in pairwise distinct $C_{j_{S'm}},C_{j_{cm}},C_{j_{c'm}}$. Set $B=\bigcup_mB_m$ with $B_m=\bigcup_{S'\in\mc S}\overline{E_{S'm}}\cup\bigcup_c\set{x_{cm}}$ and $D_{c'}=\setb{y_{c'm}}m$. Let $A$ be the union of $\bigcup_jC_j\setminus\pare{B\cup\bigcup_{c'}D_{c'}}$ and all countable labels of $\R\setminus\bigcup_jC_j$. Then $A$ is $F_\sigma$ as $\bigcup_jC_j\setminus\pare{B\cup\bigcup_{c'}D_{c'}}$ is the union of all $C_{j_{S'm}}\setminus\overline{E_{S'm}},C_{j_{cm}}\setminus\set{x_{cm}},C_{j_{c'm}}\setminus\set{y_{c'm}}$ and all $C_j$ including no $E_{S'm},\set{x_{cm}}$ or $\set{y_{c'm}}$. Let $C$ be the Cantor set obtained from \cref{findcantor} with these $A,B,D_{c'}$ and $E$ given by the realized labels of jumps. Then $C$ includes some $E_{S'm},\set{x_{cm}},\set{y_{c'm}}$ in each nonempty open subset. Let $(D_i)_i$ be an enumeration of the $E_{S'm}$ and the remaining points of labels distinct from $c_0$. Then the $D_i$ witness that $(C,P)$ is an iso-uniform sum of the canonical realizations of all $S'\in\mc S$ and the isomorphism classes of points of the realized labels, hence a canonical realization of $S$.
\end{proof}
\begin{lemma}\label{refincanon}
  Let $(\R,Q)$ be an $n$-uniform $\mc B$-refinement of $(\R,P)$. There is a construction $S$ of $\cTh_n(\R,P)$ such that the canonical realization of $S$ has an $n$-uniform $\mc B$-refinement realizing $\cTh_n(\R,Q)$.
\end{lemma}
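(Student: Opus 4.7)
I plan to argue by induction on $n$, running alongside the analogous statement for colorings of $\C$ (needed because the inductive step descends to Cantor subsets; the $\C$-version is essentially the $\R$-version transferred along $\pi$ of \cref{projcantorbij}). For the base case $n=0$, take $S=\sum^{\cTh_0(\R,P)}\emptyset$; its canonical realization has the comeager $P$-color $c_0^P$ comeager and each realized non-comeager $P$-color appearing as a countable dense set of points. Refine by partitioning $c_0^P$ into the comeager $Q$-color $c_0^Q$ (kept comeager) plus countable dense pieces for each realized non-comeager $Q$-color above $c_0^P$, and by partitioning each realized non-comeager $P$-color into countable dense pieces for the $Q$-colors above it. The result is a $0$-uniform refinement of $0$-type $\cTh_0(\R,Q)$.

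For the inductive step, list the realized Cantor sets $r_1,\dots,r_l$ of $\cTh_n(\R,Q)$, fix $Q$-realizations $C_i$, and let $r_i^P$ denote the $P$-coarsening. Given any realized Cantor set $r^P$ of $\cTh_n(\R,P)$ with realization $C$, \cref{somewhereunif} applied to $Q|_C$ produces an open subinterval on which $Q$ is $(n-1)$-uniform, and part (a) of \cref{trivunif} shows that its $P$-type is still $r^P$; hence $\{r_i^P\}$ exhausts the realized Cantor sets of $\cTh_n(\R,P)$. Apply the $\C$-version of the inductive hypothesis to each $(C_i,Q)$ as an $(n-1)$-uniform refinement of $(C_i,P)$ to obtain an $(n-1)$-type construction $S_i$ of $r_i^P$ whose canonical realization has an $(n-1)$-uniform refinement realizing $r_i$. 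Set $S:=\sum^{t_P}\{S_1,\dots,S_l\}$ with $t_P=\cTh_n(\R,P)|0$, regarded as an $n$-type construction so that each $S_i$ acquires as $n$-value the $1$-minimal $n$-type above $r_i^P$; by \cref{unifleast'} and \cref{satisconstr}, the value of $S$ is $\cTh_n(\R,P)$.

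To finish, describe the canonical realization $P_S$ as an iso-uniform sum, over the comeager color $c_0^P$, of canonical realizations of the $S_i$ (appearing as Cantor subsets $D_{ij}$ densely in every nonempty open) together with points for each realized color of $t_P$; refine the $P$-coloring on each $D_{ij}$ using the refinement supplied by the inductive hypothesis, and refine $c_0^P$ and the realized-color points as in the base case. The resulting $Q_S$ is an $n$-uniform refinement of $P_S$ with $0$-type $\cTh_0(\R,Q)$ by construction. The main obstacle is showing that the realized Cantor sets of $\cTh_n(\R,Q_S)$ are exactly $\{r_i\}$; this I would handle along the lines of \cref{compunifprecb}, treating $\bigcup_{ij}\overline{D_{ij}}$ together with the non-$c_0^P$-points as a meager $F_\sigma$ skeleton compatible with $\{r_i\}$ via \cref{unioncompat}, so that any $(n-1)$-uniform Cantor subset of $Q_S$ is either nonmeager in some $\overline{D_{ij}}$ (forcing its type to be $r_i$ by uniformity of the refinement on $D_{ij}$) or entirely contained in the skeleton, again receiving its type via compatibility. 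A secondary technical nuisance is the parallel $\C$-version of the lemma needed for the inductive descent, which is resolved by transferring along $\pi$ in \cref{projcantorbij}.
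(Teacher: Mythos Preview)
Your bottom-up induction works much harder than necessary and, as written, has a genuine gap in the step you yourself flag as ``the main obstacle''. When an $(n-1)$-uniform Cantor subset $C$ of $(\R,Q_S)$ has some $\overline{D_{ij}}$ nonmeager in it, you pass to a subinterval $C'\subseteq D_{ij}$ and invoke ``uniformity of the refinement on $D_{ij}$'' to conclude $\cTh_{n-1}(C',Q_S)=r_i$. But $(n-1)$-uniformity of $(D_{ij},Q_S)$ only controls the types of \emph{subintervals} of $D_{ij}$, not of arbitrary Cantor subsets; $C'$ is merely a (possibly meager) Cantor subset of $D_{ij}$, so its $(n-1)$-type can be any realized Cantor set of the $n$-type of $(D_{ij},Q_S)$, which you have not computed. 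Likewise, ``compatibility with $\{r_i\}$'' only says such a type embeds into a realization of some $r_i$, not that it equals some $r_i$. The net effect is that you have shown $(\R,Q_S)$ is an $(n-1)$-uniform sum of $\{r_i\}$, which by \cref{unifcomp} pins down $\cTh_{n-1}(\R,Q_S)$ but not $\cTh_n(\R,Q_S)$.

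A fix is to strengthen the inductive hypothesis so that the refinement produced is itself a canonical realization of some construction of $\cTh_{n-1}(\C,Q)$; then by the remark after the proposition on $1$-minimal types each $(D_{ij},Q_S)$ realizes the $1$-minimal $n$-type $r_i'$ above $r_i$, the skeleton becomes compatible with $\{r_i'\}$ as $n$-types, and \cref{unifcomp} applies at level $n$. Once you do this, however, your argument collapses to the paper's two-line proof: pick any construction $S'$ of $\cTh_n(\R,Q)$ (it exists by \cref{satisconstr}) and let $S$ be its coarsening along $f$. The canonical realization of $S'$ is then a refinement of the canonical realization of $S$ (coarsening commutes with taking canonical realizations, by uniqueness of iso-uniform sums), and it realizes $\cTh_n(\R,Q)$ by \cref{satisofconstr}. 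No induction on $n$, no separate $\C$-version, and no Cantor-subset bookkeeping are needed.
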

\begin{proof}
  Choose a construction $S'$ of $\cTh_n(\R,Q)$. Coarsening $S'$ yields a construction $S$ of $\cTh_n(\R,P)$. Then the canonical realization of $S'$ is a refinement of the canonical realization of $S$ and realizes $\cTh_n(\R,Q)$.
\end{proof}
\begin{proposition}\label{refinsub}
  Let $(\C,Q)$ be an $n$-uniform $\mc B$-refinement of $(\C,P)$ such that $\cTh_n(\C,P)$ is a Cantor subtype of an $(n+1)$-type $t$. Then every realization of $t$ has a Cantor subset that has a $\mc B$-refinement realizing $\cTh_n(\C,Q)$.
\end{proposition}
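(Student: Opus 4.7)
The plan is to combine \cref{refincanon,contcanon}. \Cref{refincanon} produces a construction $S$ of $\cTh_n(\C,P)$ whose canonical realization admits a uniform refinement realizing $\cTh_n(\C,Q)$. \Cref{contcanon} then lets us embed a canonical realization of $S$ inside any Cantor set realizing $\cTh_n(\C,P)$, and such a Cantor set can be found inside any realization of $t$ by the satisfaction clause for $(n+1)$-types applied to the realized Cantor set $\cTh_n(\C,P)$.

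Concretely, I would first pass the given refinement $(\C,Q)$ of $(\C,P)$ through $\R_\C$ to obtain an $n$-uniform refinement $(\R,\R_\C(Q))$ of $(\R,\R_\C(P))$; the map of color sets for the latter refinement is induced componentwise from the one witnessing that $Q$ refines $P$. Applying \cref{refincanon} yields a construction $S$ of $\cTh_n(\R,\R_\C(P))=\cTh_n(\C,P)$ whose canonical realization admits an $n$-uniform refinement realizing $\cTh_n(\R,\R_\C(Q))=\cTh_n(\C,Q)$. Next, let $R$ be any realization of $t$. Since $\cTh_n(\C,P)$ is a realized Cantor set of $t$, satisfaction provides a Cantor subset $D\subseteq R$ realizing $\cTh_n(\C,P)$. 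By \cref{contcanon} applied to $D$ and $S$, there is a Cantor subset $C_0\subseteq D$ that is itself a canonical realization of $S$. The refinement produced in the first step then transports to $C_0$, since any two canonical realizations of the same construction are iso-uniformly isomorphic and therefore carry the same $n$-uniform refinements; this gives the desired refinement of $C_0\subseteq R$ realizing $\cTh_n(\C,Q)$.

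The main friction I anticipate is the bookkeeping around the $\C$-versus-$\R$ translation via $\R_\C$: canonical realizations of constructions for coarse types of colorings of $\C$ are defined through $\R_\C$, and one must verify that refinements of $\R$-colorings produced by \cref{refincanon} genuinely pull back to refinements of the corresponding $\C$-coloring with the correct coarse type. This should follow from the computable correspondence between $\pTh_{O(k)}(\C,P')$ and $\pTh_{O(k)}(\R,\R_\C(P'))$ noted just after \cref{monadicsumcantor}, together with the observation that the refinement structure is transferred along the quotient map $\pi$ of \cref{projcantorbij} without loss.
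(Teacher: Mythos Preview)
Your proposal is correct and uses the same two ingredients as the paper, \cref{refincanon} and \cref{contcanon}. The only variation is where \cref{contcanon} is invoked: the paper applies it to the $(n+1)$-type $t$, reducing to the canonical realization of the construction of $t$ whose summands are \emph{all} constructions of the minimal types above the realized Cantor sets (this canonical realization then already contains a canonical realization of every construction of $\cTh_n(\C,P)$, in particular of the $S$ produced by \cref{refincanon}); you instead first find a Cantor subset $D$ realizing $\cTh_n(\C,P)$ inside an arbitrary realization of $t$ and apply \cref{contcanon} directly to the $n$-type $\cTh_n(\C,P)$ there. Your route is marginally more direct in that it does not need the specific ``rich'' construction of $t$, while the paper's route avoids your final step of transporting the refinement along an isomorphism of canonical realizations. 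Your remarks on the $\R_\C$ bookkeeping are accurate and match how the paper handles the $\C$-versus-$\R$ translation throughout.
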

\begin{proof}
  By \cref{contcanon} it suffices to consider the canonical realization $(\C,P')$ of the construction of $t$ with summands all constructions of minimal types above Cantor subtypes. In particular, $(\C,P')$ contains a canonical realization of each construction of $\cTh_n(\C,P)$. Thus \cref{refincanon} finishes the proof.
\end{proof}
\Cref{refinsub} yields \cref{computh2} as indicated above. Namely, the possible uniform $\mc B$-refinements of Cantor subsets of $(\R,P)$ are the satisfiable coarse types whose coarsening is a Cantor subtype of $(\R,P)$.

\subsection{Separation games}\label{secsepgame}
We prove \cref{computh1} using games, by induction on the number of labels. In the game we describe below, Separator aims to show the existence of a partition into sets of low complexity in the difference hierarchy \cite[22.E]{kechrisdst} of $F_\sigma$ sets using fewer labels. Pathfinder aims to show that, locally on a Cantor set, all labels have high complexity in the difference hierarchy. We formalize this using the building blocks $\setb{y\in k^\N}{\limsup y=i}$ for Wadge complete sets $\setb{y\in k^\N}{\limsup y\equiv i\mod2}$ for the first $\omega$ levels of the difference hierarchy.
\begin{definition}
  Consider a labeling $P\colon X\to\set{0,\dots,m-1}$ of $X\subseteq\R$ and $k\in\N_{>0}$. We define the \textbf{separation game} of $(P,k)$. There are two players, Pathfinder and Separator.\par
  Alternatingly, Pathfinder plays either a digit in $\set{0,\dots,9}$ or the radix point, and Separator plays an element of $\set{0,\dots,k-1}$. A play determines $x\in\R$ whose decimal representation is coded by Pathfinder and $y\in k^\N$ played by Separator. Pathfinder wins the play if $x\in X$ and $P(x)\equiv\limsup y\mod m$.
\end{definition}
If $X\in\mc B_\R$ and $P$ is a $\mc B$-labeling, then the separation game is determined.
\begin{remark}
  The separation game for $m=2$ is essentially a Wadge game for the sets $P^{-1}(\set0)$ and $\setb{y\in k^\N}{\limsup y\equiv0\mod2}$. There are two differences:
  \begin{itemize}
  \item Instead of $x\in\N^\N$, Pathfinder produces $x\in\R$. This difference is insignificant: For our argument, after minor changes we could restrict play to $\R\setminus C\cong\N^\N$ for a countable subset $C$ of the comeager label.
  \item Separator is not allowed to play waiting moves. In other words, our game is a Lipschitz game, not a Wadge game. However, playing $0\in\set{0,\dots,k-1}$ is at least as good as a waiting move, whence the Lipschitz and the Wadge game are equivalent, or more formally, won by the same player.
  \end{itemize}
  Taking $m>2$ corresponds to generalizing Wadge degrees to partitions (\cite{wadgepart}; for a more general account, see \cite{wadgebqo}). This observation could also be used to obtain an alternative proof of the following \cref{separatorwins}.
\end{remark}
\begin{lemma}\label{separatorwins}
  The following conditions are equivalent for a labeling $P$ and $k\in\N$:
  \begin{tfae}
  \item Separator has a winning strategy in the separation game of $(P,k)$.
  \item There is a partition $X=\bigcup_{i=0}^{k-1}X_i$ such that each $\bigcup_{i=0}^jX_i$ is $F_\sigma$ in $X$ and no label in $P(X_i)$ is congruent to $i$ modulo $m$.
  \end{tfae}
\end{lemma}
\begin{proof}
  Suppose that Separator wins. Declare $x\in X_i$ for the minimal $i$ such that there is a play that is compatible with the strategy and consists of a representation of $x$ and some $y$ with $\limsup y=i$. Since the strategy is winning, no label in $P(X_i)$ is congruent to $i$ modulo $m$. Finally, $\bigcup_{i=0}^jX_i$ is the set of $x\in X$ such that for one of the at most two representations of $x$, if Pathfinder plays this representation, then the $y$ played by Separator satisfies the $F_\sigma$ condition $\limsup y\le j$. Thus $\bigcup_{i=0}^jX_i$ is $F_\sigma$.\par
  Conversely, consider a partition $X=\bigcup_{i=0}^{k-1}X_i$ as in (ii). For each $j\in\set{0,\dots,k-1}$, write $\bigcup_{i=0}^{j-1}X_i=\bigcup_{n\in\N}C_{jn}\cap X$ with $C_{jn}$ closed. Given a position $a$, let $p_j(a)\in\N\cup\set\infty$ be the minimal $n$ such that $a$ has an extension in $C_{jn}$. Then Separator plays the maximal $j$ such that $p_j$ was increased by the immediately preceding move of Pathfinder. Let $x\in\R$ and $y\in k^\N$ form a play compatible with this strategy. Set $j=\limsup y$. Since $p_j$ is increased infinitely often, $x\notin\bigcup_nC_{jn}\supseteq\bigcup_{i=0}^{j-1}X_i$. Since $p_{j+1}$ is increased only finitely often, $x$ has arbitrarily long prefixes with extensions in $C_{j+1,n}$, where $n=p_{j+1}(a)$ for almost all prefixes $a$ of $x$. Since $C_{j+1,n}$ is closed, $x\in C_{j+1,n}\subseteq(\R\setminus X)\cup\bigcup_{i=0}^jX_i$. Hence, $x\notin X$ or $x\in X_j$, whence Separator wins.
\end{proof}
In particular, $X_j$ is the intersection of the $F_\sigma$ set $\bigcup_{i=0}^jX_i$ and the $G_\delta$ set $\bigcup_{i=j}^{k-1}X_i$. Using \cref{defcountable}, for each $k$, the winner of the separation game is expressible in restricted monadic logic by a formula $\phi_k(P)$.
\begin{example}
  There are sufficiently stable Boolean algebras whose members have the Baire property and such that the separation game is not determined. For instance, let $X$ be an uncountable set without a perfect subset. Then a separation game for $P\colon\R\to\set{0,1}$ with $P_1=X$ is undetermined. Indeed, a winning strategy of Separator for some $k$ would imply that $X$ is a Boolean combination of $F_\sigma$ sets, while a winning strategy of Pathfinder for $k\ge2$ would yield a perfect subset of $X$. This is the same argument as for classical Wadge games \cite[Theorem II.C.2]{wadgethesis}.
\end{example}

\subsection{Separable types}
We now phrase consequences of winning strategies in terms of coarse types, which will be employed in \cref{winnercoarse}.
\begin{definition}
  \begin{parts}
  \item We define by recursion on $k$ if an element of $\fcTh_n(\set{0,\dots,m-1})$ is \textbf{separable} of \textbf{index} $k$. A type $t$ is separable of index at most $k$ whenever $k-1$ is not congruent to the comeager label modulo $m$ and $t$ is a uniform sum with summands that are separable of index at most $k$ (as base case, this permits only empty sets of summands). For $1$-types and $k=1$, we additionally demand that label $0$ is omitted.
  \item We define by recursion on $k$ if an element of $\fcTh_n(\set{0,\dots,m-1})$ is \textbf{anti-separable} of \textbf{index} $k\le n$. A type is anti-separable of index at least $k$ whenever $k-1$ is congruent to the comeager label modulo $m$ and some Cantor $(n-1)$-subtype is anti-separable of index $k-1$. For $k=1$, we demand that label $0$ is realized instead of the condition on $k-1$.
  \end{parts}
\end{definition}
Let $P\colon\R\to I$ be a $\mc B$-labeling.\par
If an $n$-type $t$ is separable of some index, then so is $t|\ell$ for all $\ell\le n$. Conversely, if $t|k$ is separable of index $k$, then so is $t$. An $n$-type $t$ is anti-separable of index $k$ if and only if $t|k$ is so. Thus, we call $(\R,P)$ (anti-)separable of index $k$ if $\cTh_k(\R,P)$ is (anti-)separable of index $k$.\par
Relating separable types with winning strategies is straightforward:
\begin{lemma}\label{separatorwinstype}
  If Separator wins the separation game of $(P,k)$, then $(\R,P)$ is separable of index at most $k$.
\end{lemma}
\begin{proof}
  \Cref{separatorwins} yields a partition into an $F_\sigma$ set $X_{<k-1}$ and a $G_\delta$ set $X_{k-1}$ such that no label in $P(X_{k-1})$ is congruent to $k-1$ and Separator wins the separation game of $(P|_{X<k-1},k-1)$. By induction on $k$, each Cantor subset of $X_{<k-1}$ is separable of index less than $k$. Since the label congruent to $k-1$ does not occur, each Cantor subset of $X_{k-1}$ is separable of index at most $k$, and the comeager label is not congruent to $k-1$.
\end{proof}
To relate anti-separable types with winning strategies, we iterate the following lemma.
\begin{lemma}\label{pathfinderwins}
  Let $i\in\set{0,\dots,m-1}$ be a label such that Pathfinder wins the separation game of $(P|_X,m\cdot k+i+1)$ and $n\in\N$. Then there are an $n$-uniformly labeled $C\subseteq X$ and a relatively comeager $G_\delta$ subset $Y$ of $C$ such that $C$ is an interval or Cantor set, $Y$ has label $i$, and Pathfinder wins the separation game of $(P|_{C\setminus Y},m\cdot k+i)$.
\end{lemma}
\begin{proof}
  For $m=1$, Pathfinder wins on every nonempty $X$ and the lemma is easy. Thus assume $m>1$. The winning strategy of Pathfinder corresponds to a continuous map $\sigma\colon(m\cdot k+i+1)^\N\to X$.\par
  Let $C$ be a subinterval of the image of $\sigma$ that is $n$-uniformly labeled by the labeling that expands $P$ by the subset $\sigma(\setb y{\limsup y<m\cdot k+i})$. Replacing a finite initial segment of $\sigma$, we may arrange that $C=\im(\sigma)$. Set
  \[Y=C\setminus\sigma(\setb y{\limsup y<m\cdot k+i}).\]
  Since the image of the $F_\sigma$ subset $\setb y{\limsup y<m\cdot k+i}$ of a compact space is $F_\sigma$, its complement $Y$ is $G_\delta$. Since $\sigma$ is winning, $Y\subseteq\sigma(\setb y{\limsup y=m\cdot k+i})$ has label $i$. Since $\sigma((m\cdot k+i)^\N)\subseteq C\setminus Y$, the restriction of $\sigma$ is a winning strategy for the separation game of $(P|_{C\setminus Y},m\cdot k+i)$.\par
  We show that $C$ is a closed interval or Cantor set. First, $C=\im(\sigma)$ is a nonempty compact subset of $\R$. We show that there is no isolated point $x$. Otherwise $\sigma^{-1}(\set x)$ is nonempty open, thus meets the dense sets $\setb y{\limsup y=\ell}$ for $\ell\in\set{0,1}$, which contradicts that $\sigma$ is winning and $m>1$.\par
  It remains to show that $Y$ is comeager. Since $Y$ is $G_\delta$, it suffices to show that $Y$ is dense. By $0$-uniformity, it suffices to show that $Y$ is nonempty. Suppose otherwise. On the one hand, $\sigma$ also witnesses that Pathfinder wins the separation game of $(P|_{\sigma((m\cdot k+i)^\N)},m\cdot k+i)$. On the other hand, \cref{separatorwins} with
  \[X_0=\emptyset\text{ and }X_\ell=\sigma(\setb y{\limsup y=\ell-1})\setminus\sigma(\setb y{\limsup y<\ell-1})\text{ for }\ell>0\]
  shows that Separator wins this game. This is absurd.
\end{proof}
\begin{lemma}\label{pathfinderwinstype}
  If Pathfinder wins the separation game of $(P,k)$, then $(\R,P)$ is anti-separable of index at most $k$.
\end{lemma}
\begin{proof}
  First, replace $\R$ by the set $C$ of \cref{pathfinderwins}. Since $Y$ is comeager of the label congruent to $k-1$, it suffices to obtain a subset that is anti-separable of index $k-1$. Pathfinder wins the separation game of $(P|_{C\setminus Y},k-1)$, thus also for some closed subset of $C\setminus Y$. We finish by induction on $k$.
\end{proof}
\begin{corollary}\label{winnercoarse}
  \begin{parts}
  \item Separator wins the separation game of $(P,k)$ if and only if $(\R,P)$ is separable of index at most $k$.
  \item Pathfinder wins the separation game of $(P,k)$ if and only if $(\R,P)$ is anti-separable of index at least $k$.
  \end{parts}
\end{corollary}
\begin{proof}
  \Cref{separatorwinstype,pathfinderwinstype} establish necessity. Since by induction no type is both separable and anti-separable of the same index, sufficiency follows from determinacy.
\end{proof}
In particular, the winner of the separation game can be computed from a coarse type. This enables the case distinction in the proof of \cref{computh1} below.

\subsection{Refinements}\label{secrefin}
We now show \cref{computh1} for refinements of a $\mc B$-labeling $P\colon\R\to J$ along a fixed $f\colon I\to J$ by induction on the number of labels $\#J$.
\subsubsection{Case 1: Separator wins the separation game of $(P,k)$ for some $k$}
Let $t\in\fcTh_n(\set{0,\dots,m-1})$. Let $k\in\N$ and let $i\in\set{0,\dots,m-1}$ be the label with $i\equiv k-1\mod m$.
\begin{definition}
  Let $t$ be separable of index $k$.
  \begin{parts}
  \item The \textbf{lower summands} of $t$ are the Cantor $(n-1)$-subtypes of $t$ that are separable of index less than $k$.
  \item The \textbf{upper part} of $t$ is the type in $\fcTh_n(\set{0,\dots,m-1}\setminus\set i)$ that is obtained from $t$ by removing all Cantor subtypes with realized label $i$.
  \end{parts}
\end{definition}
\begin{lemma}\label{omitsepar}
  Let $t$ be separable of index $k$. Each Cantor subtype of $t$ without realized label $i$ is a lower summand of $t$.
\end{lemma}
\begin{proof}
  Since no Cantor subset has comeager label $i$, this follows by recursion on a construction.
\end{proof}
\begin{lemma}\label{stepseparable}
  Let $t$ be separable of index $k$. If $(\R,P)$ realizes $t$, then there is a comeager $G_\delta$ set $X$ such that the following conditions hold:
  \begin{parts}
  \item The labeling that equals $P$ on $X$ and is constantly equal to the comeager label of $P$ outside $X$ realizes the upper part of $t$; and
  \item each $(n-1)$-uniform Cantor subset of $\R\setminus X$ realizes a lower summand of $t$ and each nonempty open subset of $\R$ includes Cantor subsets of $\R\setminus X$ realizing all lower summands of $t$.
  \end{parts}
\end{lemma}
\begin{proof}
  \Cref{separatorwins} yields a $G_\delta$ set $X$ such that label $i$ does not occur in $X$ and Separator wins the separation game of $(P|_{\R\setminus X},k-1)$. Hence all types of Cantor subsets of $\R\setminus X$ are lower summands. Since some Cantor subtype of index $k$ is not realized outside $X$, but each nonempty open subset includes a realization, the $G_\delta$ set $X$ is dense, thus comeager.\par
  Removing some Cantor sets from $X$, we may arrange that each lower summand is realized in every nonempty open subset by a Cantor subset of $\R\setminus X$, that is, (b) holds.\par
  Let $s$ be a Cantor subtype without realized label $i$. By \cref{omitsepar}, $s$ is realized in every nonempty open subset by a Cantor subset of $\R\setminus X$. Write $\R\setminus X$ as a countable disjoint union of closed sets $C_j$. For all $s$ and basic open $U_m$, choose realizations $E_{sm}\subseteq U_m$ of $s$ that are subsets of pairwise distinct $C_j$. Replacing $X$ by $X\cup\bigcup_{sm}E_{sm}$, which remains $G_\delta$ as each $C_j\cap\bigcup_{sm}E_{sm}$ is locally closed, we can arrange that all $s$ occur in $X$ in every nonempty open subset. This guarantees (a).
\end{proof}
Thus, $X$ and the Cantor subsets of $\R\setminus X$ can be refined independently.
\begin{corollary}\label{refineparts}
  Let $t$ be a separable $n$-type of index $k$. If $(\R,P)$ realizes $t$, then the possible refinements of $\cTh_n(\R,P)$ along $f$ are obtained in the following way. Take any possible refinement $r_0$ of the upper part and for any lower summand $s$ take a nonempty set $R_s$ of possible refinements. Then a possible refinement is obtained by adding to a uniform sum decomposition of $r_0$ all elements of $\bigcup_sR_s$ as summands.
\end{corollary}
\begin{proof}
  Let $X$ be as in \cref{stepseparable}. Write $\R\setminus X$ as a countable disjoint union of Cantor sets and points $C_j$. Since we will define the refinement below independently on each $C_j$, we may replace $P$ on each $C_j$ by a $P'$ with $\Th_\ell(C_j,P)=\Th_\ell(C_j,P')$ for $\ell$ large enough to express the possible refinements. Thus, decomposing them further, by \cref{shelahresdecomp,relcoarunif} we may assume that all $C_j$ are $n$-uniformly labeled.\par
  Each Cantor set $C_j$ realizes some lower summand of $t$. We modify the $C_j$ such that every nonempty open subset includes $C_j$'s realizing all lower summands $s$ of $t$. For all $s$ and basic open $U_m$, choose Cantor sets $E_{sm}\subseteq U_m$ realizing $s$ that are subsets of pairwise distinct $C_j$. Decompose each $C_j$ that includes an $E_{sm}$ into this $E_{sm}$ and countably many relatively open intervals and points. By uniformity of the $C_j$, all resulting Cantor sets realize lower summands.\par
  Finally, refine $X$ as indicated by $r_0$ and the $C_j$ as indicated by the $R_s$ such that each element of $R_s$ is realized in every nonempty open subset. Since $X$ and the $C_j$ witness a uniform sum decomposition before refining, they do so afterwards.
\end{proof}
This enables a computation of the set of possible refinements by recursion. We first describe base cases. The refinements of types in a single label, that is, in $\cTh_n(\tsingl)$, are described by \cref{coarsecomput}. Next, types that are separable of index $1$ can be reduced to types in fewer labels.\par
The set of possible refinements of the upper part can be computed by recursion on the number of labels. The sets of possible refinements of lower summands can be computed by recursion on the index $k$ of the separable type $t$. \Cref{refineparts} finishes the computation of the set of possible refinements.
\begin{remark}
  Instead of the partition of \cref{stepseparable}, we could have chosen other ways of distributing Cantor subsets between $X$ and $\R\setminus X$. Our induction requires to place Cantor subsets containing label $i$ outside $X$ and Cantor subsets that are not separable of index less than $k$ inside $X$. In our construction, Cantor subtypes that do not contain label $i$, and thus in particular are separable of index less than $k$ by \cref{omitsepar}, are realized both outside and inside $X$. We could instead demand that they are realized only inside $X$ unless they embed into other separable Cantor subsets. It is not clear to the author whether we could alternatively demand that they are realized only outside $X$.
\end{remark}
\subsubsection{Case 2: Pathfinder wins the separation game of $(P,k)$ for large enough $k$}
It suffices to show that in this situation every satisfiable refinement of $\cTh_n(\R,P)$ is realized by a refinement of $(\R,P)$, as discussed in the beginning of \cref{secrefincant}.\par
The following strengthening of anti-separability can be obtained at the cost of increasing the quantifier rank.
\begin{definition}
  A coarse $n$-type is \textbf{maximal} if it is maximal with respect to the partial order $\le$ of \cref{partordcoarse}.
\end{definition}
Equivalently, a $0$-type is maximal if all labels are realized, and an $(n+1)$-type is maximal if all satisfiable types of Cantor sets are realized.
\begin{lemma}\label{existsmax}
  Let $m,n\in\N$ and $i\in\set{0,\dots,m-1}$. If a type $t\in\fcTh_{m\cdot(n+1)+i+1}(\set{0,\dots,m-1})$ is anti-separable of index at least $m\cdot(n+1)+i+1$, then $t$ has a Cantor subtype $s$ of comeager label $i$ such that $s|n$ is maximal.
\end{lemma}
\begin{proof}
  We proceed by induction on $n$. Passing to a Cantor subtype, we may assume that $t$ is anti-separable of index exactly $m\cdot(n+1)+i+1$. In this case, by \cref{contunifsub} it suffices to show that $t|n$ is maximal.\par
  By anti-separability, the comeager label is $i$. For $n=0$, all labels are comeager in same Cantor subtype, thus realized, and maximality follows. Suppose $n>0$. For all labels $j\in\set{0,\dots,m-1}$, by the induction hypothesis for $t|(m\cdot n+j+1)$ there are Cantor subtypes $t_j$ of comeager label $j$ such that $t_j|(n-1)$ is maximal.
\end{proof}
We say that a type $t\in\fcTh_n(I)$ is \textbf{maximal in the labels} $I'\subseteq I$ if only labels in $I'$ are realized in $t$, and the thus obtained type in $\fcTh_n(I')$ is maximal.
\begin{lemma}\label{suffmaxtype}
  Let $(\R,Q)$ be an $n$-uniform $\mc B$-refinement of $(\R,P)$ along $f\colon I\to J$. Suppose that there is $I'\subseteq I$ with $f(I')=f(I)$ such that all $(n-1)$-types that are maximal in the labels $I'$ are Cantor subtypes of $\cTh_n(\R,Q)$. Then every realization of $\cTh_n(\R,P)$ has an $(n-1)$-uniform $\mc B$-refinement realizing $\cTh_{n-1}(\R,Q)$.
\end{lemma}
\begin{proof}
  Let $P'\colon\R\to J$ realize $\cTh_n(\R,P)$. In the next paragraph, we proceed as in the second paragraph of \cref{contcanon}.\par
  The complement of the comeager label under $P'$ is included in a meager $F_\sigma$ set $M$. Increasing $M$, by \cref{contcanon} we may assume that for every nonempty open $U$, the restriction $M\cap U$ includes for each construction $S$ of a Cantor subtype of $\cTh_n(\R,P)$ a canonical realization of $S$. By \cref{refincantordense}, write $M=\bigcup_jC_j$ with pairwise disjoint closed $C_j$ such that for each nonempty open $U$ and $S$, some $C_j\cap U$ includes a canonical realization of $S$.\par
  Using \cref{refincanon}, refine these canonical realizations to obtain every Cantor subtype of $\cTh_{n-1}(\R,Q)$. Refine $\R\setminus\bigcup_jC_j$ by the comeager label of $(\R,Q)$. It remains to refine the remaining $C_j$ in a compatible way. For this, let $g\colon f(I)\to I'$ be such that $f\circ g=\id$ and coarsen the remaining $C_j$ along $g$. The resulting refined Cantor sets are compatible with the set of Cantor subtypes of $\cTh_n(\R,Q)$ since $\cTh_n(\R,Q)$ contains all maximal $(n-1)$-types in its realized labels.
\end{proof}
The assumptions of \cref{suffmaxtype} are satisfied by the following lemma, whose assumptions follow from \cref{existsmax} for anti-separable types of sufficiently large index.
\begin{lemma}
  Suppose that a type $t'$ is a refinement of $t$ along a map $f\colon I\to J$. There is $k$, computable from $f$ and $n$, such that whenever $t$ has a maximal Cantor $k$-subtype, then there is $I'\subseteq I$ with $f(I')=J$ such that all $n$-types that are maximal in the labels $I'$ are Cantor subtypes of $t'$.
\end{lemma}
\begin{proof}
  We use induction on $\#I$. If $t'$ realizes a Cantor set omitting some $i\in I$ whose coarsening is a maximal $k_1$-type for $k_1$ large enough, then the induction hypothesis for $I\setminus\set i$ applies.\par
  Otherwise, we show that we may choose $I'=I$ for large enough $k$. By induction on $n$, there is $k_2$ such that every Cantor subtype whose coarsening has a maximal $k_2$-type contains a maximal $(n-1)$-type. Set $k=k_1+k_2$.\par
  We show that an arbitrary maximal $n$-type, say with comeager label $i$, is realized. Write $t'$ as a nested uniform sum of types with $k_2$-maximal coarsenings. Let $t_1'$ be the type that results from replacing each of these summands by its underlying $0$-type without label $i$. Let $t_1$ be the coarsening of $t_1'$. Since $t$ contains a $k$-maximal type, $t_1$ contains a $k_1$-maximal type. By choice of $k_1$, $t_1'$ contains label $i$. Thus some Cantor subtype of $t'$ has comeager label $i$ and a Cantor subtype $t_2'$ with $k_2$-maximal coarsening. Finally, $t_2'$ contains each $(n-1)$-maximal type, whence $t'$ contains the given maximal $n$-type.
\end{proof}

\printbibliography
\end{document}